\newtheorem{thm}{Theorem}[section]
\newtheorem{cor}[thm]{Corollary}
\newtheorem{pro}[thm]{Proposition}
\newtheorem{ex}[thm]{Example}
\newtheorem{rmk}[thm]{Remark}
\newtheorem{defi}[thm]{Definition}
\newcommand {\emptycomment}[1]{}
\newcommand {\yh}[1]{{\marginpar{*}\scriptsize\textcolor{purple}{yh: #1}}}
\newcommand{\lon }{\,\rightarrow\,}
\newcommand{\be }{\begin{equation}}
\newcommand{\ee }{\end{equation}}
\newcommand{\g}{\mathfrak g}
\newcommand{\h}{\mathfrak h}
\newcommand{\huaG}{\mathcal{G}}
\newcommand{\Ri}{\mathsf{R}}
\newcommand{\huaU}{\mathcal{U}}
\newcommand{\huaX}{\mathcal{X}}
\newcommand{\huaC}{{\mathcal{C}}}
\newcommand{\InnDer}{\mathrm{InnDer}}
\newcommand{\Orb}{\mathrm{Orb}}
\newcommand{\InnAut}{\mathrm{InnAut}}
\newcommand{\GL}{\mathrm{GL}}
\newcommand{\Id}{{\rm{Id}}}
\newcommand{\br}[1]{   [ \cdot,    \cdot  ]   }
\newcommand{\Hom}{\mathrm{Hom}}
\newcommand{\Der}{\mathrm{Der}}
\newcommand{\Aut}{\mathrm{Aut}}
\newcommand{\ad}{\mathrm{ad}}
\begin{document}

\title{Stability and rigidity of $3$-Lie algebra morphisms}

\author{Jun Jiang}
\address{Department of Mathematics, Jilin University, Changchun 130012, Jilin, China}
\email{junjiang@jlu.edu.cn}

\author{Yunhe Sheng}
\address{Department of Mathematics, Jilin University, Changchun 130012, Jilin, China}
\email{shengyh@jlu.edu.cn}

\author{Geyi Sun}
\address{Department of Mathematics, Jilin University, Changchun 130012, Jilin, China}
\email{sungy21@mails.jlu.edu.cn}


\begin{abstract}
In this paper, first we use the higher derived brackets to construct an $L_\infty$-algebra, whose Maurer-Cartan elements are $3$-Lie algebra morphisms. Using the differential in the $L_\infty$-algebra that govern deformations of the morphism, we give the cohomology of a $3$-Lie algebra morphism. Then we  study the rigidity and stability of $3$-Lie algebra morphisms using the established cohomology theory. In particular, we show that if the first cohomology group is trivial, then the morphism is rigid; if the second cohomology group is trivial, then the morphism is stable. Finally, we study the stability of $3$-Lie subalgebras similarly.
\end{abstract}


\keywords{}

\keywords{3-Lie algebra, morphism, cohomology, deformation}

\renewcommand{\thefootnote}{}
\footnotetext{2020 Mathematics Subject Classification. 17A42, 17B40, 17B56
}
\maketitle
\tableofcontents

\allowdisplaybreaks


\section{Introduction}

For an algebraic, or a geometric structure, a meaningful problem is that describe a neighborhood of such structure in its moduli space. Deforming the structure is a useful way to solve this problem. Roughly speaking, a deformation of a structure is a small curve through the original structure in the moduli space. The concept of a formal deformation of an algebraic structure began with the seminal work of Gerstenhaber~\cite{Ge0,Ge} for associative algebras. Nijenhuis and Richardson   extended this study to Lie algebras~\cite{NR,NR2}.  There is a well known slogan, often attributed to Deligne, Drinfeld and Kontsevich: every reasonable deformation theory is controlled by a differential graded Lie algebra, determined up to quasi-isomorphism. This slogan has been made into a rigorous theorem by Lurie and Pridham \cite{Lu,Pr}. Rigidity and stability are natural questions need to be considered in studying deformations. As important invariants, the cohomology is a useful tool to characterize deformations. In general, taking the differential of a deformation, we obtain a cocycle. Moreover,  rigidity and stability can be characterized in terms of the cohomology groups.

 It is also meaningful to study deformations of morphisms of given algebraic structures. Deformations of morphisms of associative algebras were studied  in \cite{Borisov, Bor1}, and deformations of morphisms of Lie algebras were studied  in \cite{Das,Fre,Fregier-Zambon-2,NR0}. On the other hand, morphisms between two algebraic structures naturally give rise to subalgebras of the direct sum algebras. Deformations of subalgebras of Lie algebras were study in \cite{CSS,Ri}.

The purpose of this paper is to study deformations of 3-Lie algebra morphisms, with special attention to the rigidity and stability problems.
The notion of  $3$-Lie algebras, or more generally,   $n$-Lie algebras (also called   Filippov algebras)~\cite{FI} can be seen as a generalization of Lie algebras to higher arities.  See the review article~\cite{deI} for their applications in mathematical physics. In particular, $n$-Lie algebras are the algebraic structures corresponding to Nambu mechanics and related to Nambu-Poisson structures  \cite{Nam,SJ,Ta}. Moreover, $3$-Lie algebras also appear in the study of Bagger-Lambert-Gustavsson theory of multiple M2-branes and string theory~\cite{BL,CS,deM}.  Deformations of $3$-Lie algebras were studied in~\cite{Fa, Mak, Ta1}.
In~\cite{ABM}, Arfa, Ben Fraj and Makhlouf introduced the cohomology complex of $n$-Lie algebra morphisms and studied one-parameter formal deformations of $n$-Lie algebras morphisms.

\emptycomment{
\begin{defi}\cite{ABM}
Let $(\g,[\cdot,\ldots,\cdot])$ be a $n$-Lie algebra. A one-parameter formal deformation
of the $n$-Lie algebra $\g$ is given by a $\mathbb{R}[\![t]\!]$-$n$-linear map
$$[\cdot,\ldots,\cdot]_{t}:\g[\![t]\!]\times\ldots\times\g[\![t]\!]\rightarrow\g[\![t]\!]$$
of the form $[\cdot,\ldots,\cdot]_{t}=\sum\limits_{i\geq0}t^{i}[\cdot,\ldots,\cdot]_{i}$ where each $[\cdot,\ldots,\cdot]_{i}$
is a skew-symmetric $\mathbb{R}$-$n$-linear map $[\cdot,\ldots,\cdot]_{i}:\g\times\ldots \times\g
\rightarrow\g$
(extended to a $\mathbb{R}[\![t]\!]$-$n$-linear map), and $[\cdot,\ldots,\cdot]_{0}=[\cdot,\ldots,\cdot]$ such that for
$(x_{i})_{1\leq i \leq 2n-1}$
\begin{equation*}\label{def}
[x_{1},\ldots,x_{n-1},[x_{n},\ldots,x_{2n-1}]_{t}]_{t}=\sum\limits_{i=n}^{2n-1}
[x_{n},\ldots,x_{i-1},[x_{1},\ldots,x_{n-1},x_{i}]_{t},x_{i+1},\ldots,x_{2n-1}]_{t}
\end{equation*}
Let $\phi:\g\rightarrow \g'$ be a $n$-Lie algebra morphism.
Define a deformation of $\phi$ to be a triple \\$\Theta_{t}=([\cdot,\ldots,\cdot]_{\g,t};[\cdot,\ldots,\cdot]_{\g',t};\phi_{t})$
in which :
\begin{itemize}
    \item $[\cdot,\ldots,\cdot]_{\g,t}=\sum\limits_{i\geq0}t^{i}[\cdot,\ldots,\cdot]_{\g,i}$ is a deformation of $\g$
    \item $[\cdot,\ldots,\cdot]_{\g',t}=\sum\limits_{i\geq0}t^{i}[\cdot,\ldots,\cdot]_{\g',i}$ is a deformation of $\g'$
    \item $\phi_{t}:\g[\![t]\!]\rightarrow \g[\![t]\!]$ is
    a $n$-Lie algebra morphism of the form $\phi_{t}=\sum\limits_{n\geq0}\phi_{n}t^{n}$
    where each $\phi_{n} :\g\rightarrow \g'$ is a $\mathbb{R}$-linear map and
    $\phi_{0}=\phi$, such that $\phi_{t}$ satisfies the following equation
    $$\phi_{t}([x_{1},\ldots,x_{n}]_{\g,t})=[\phi_{t}(x_{1}),\ldots,\phi_{t}(x_{n})]_{\g',t}.$$
\end{itemize}
\end{defi}
}

In this paper,  first  we use the derived bracket approach to give the controlling algebra of $3$-Lie algebra morphisms. Consequently, the cohomology of a $3$-Lie algebra morphism is introduced using the differential in the $L_\infty$-algebra that govern deformations of the morphism. Then we study (geometric) deformations of $3$-Lie algebra morphisms, which is different from formal deformations considered in \cite{ABM}. In particular, rigidity and stability of deformations of $3$-Lie algebra morphisms are explored. We show that for a 3-Lie algebra morphism $f:\g\to \h$, if $H^1(f)=0,$ then it is rigid; if  $H^2(f)=0,$ then it is stable. Due to the close relation between morphisms and subalgebras, we also study deformations of $3$-Lie subalgebras similarly.

The paper is organized as follows. In Section \ref{sec:con}, we construct an $L_\infty$-algebra via the higher derived brackets, whose Maurer-Cartan elements are $3$-Lie algebra morphisms (Theorem \ref{Maurer-Cartanm}). In particular, we give the $L_\infty$-algebra that governs deformations of $3$-Lie algebra morphisms using Getzler's twisted brackets. In Section \ref{sec:coh}, we introduce the cohomology of $3$-Lie algebra morphisms using the differential in the $L_\infty$-algebra that governs deformations of $3$-Lie algebra morphisms. In Section \ref{sec:sec:rig-hom}, (geometric) deformations of $3$-Lie algebra morphisms are studied. We give the conditions when $3$-Lie algebra morphisms are rigid and stable (Theorem \ref{thmrigidity}, Theorem \ref{thmrigidity2}). We also give the sufficient condition on a cocycle giving a deformation of a $3$-Lie algebra morphism (Theorem \ref{thmnedef}). In Section \ref{sec:rig-sub}, we study deformations of $3$-Lie subalgebras with special attention to the stability (Theorem \ref{staoflies}). We show that deformations of $3$-Lie algebra morphisms give rise to deformations of $3$-Lie subalgebras (Proposition \ref{deformal}), and establish the relation between the corresponding cohomology groups.

\section{Derived brackets and Maurer-Cartan characterization of $3$-Lie algebra morphisms}\label{sec:con}

In this section, first we recall derived brackets which is a very useful way to construct $L_\infty$-algebras. Then we construct the controlling algebra of $3$-Lie algebra morphisms using derived brackets.
\subsection{$L_\infty$-algebras and derived brackets}

Let $\g=\oplus_{k\in\mathbb Z}\g^k$ be a $\mathbb Z$-graded vector space.
 The   desuspension operator  $s^{-1}$ changes the grading of $\g$ according to the rule $(s^{-1}\g)^i:=\g^{i+1}$. The  degree $-1$ map $s^{-1}:\g\lon s^{-1}\g$ is defined by sending $v\in \g$ to its   copy $s^{-1}v\in s^{-1}\g$.

\begin{defi}\cite{KS}
An $L_\infty$-algebra is a $\mathbb Z$-graded vector space $\g=\oplus_{k\in\mathbb Z}\g^k$ equipped with a collection $(k\ge 1)$ of linear maps $l_k:\otimes^k\g\lon\g$ of degree $1$ with the property that, for any homogeneous elements $x_1,\cdots,x_n\in \g$, we have
\begin{itemize}\item[\rm(i)]
{\em (graded symmetry)} for every $\sigma\in S_{n}$,
\begin{eqnarray*}
l_n(x_{\sigma(1)},\cdots,x_{\sigma(n)})=\varepsilon(\sigma)l_n(x_1,\cdots,x_n),
\end{eqnarray*}
\item[\rm(ii)] {\em (generalized Jacobi identity)} for all $n\ge 1$,
\begin{eqnarray*}\label{sh-Lie}
\sum_{i=1}^{n}\sum_{\sigma\in  S(i,n-i) }\varepsilon(\sigma)l_{n-i+1}(l_i(x_{\sigma(1)},\cdots,x_{\sigma(i)}),x_{\sigma(i+1)},\cdots,x_{\sigma(n)})=0,
\end{eqnarray*}

\end{itemize}where $\varepsilon(\sigma)=\varepsilon(\sigma;v_1,\cdots,v_n)$ is the   Koszul sign for a permutation $\sigma\in S_n$ and $v_1,\cdots, v_n\in V$.
\end{defi}

\begin{defi}\cite{Fregier-Zambon-1}
 A Maurer-Cartan element of an $L_\infty$-algebra $(\g,\{l_k\}_{k=1}^{+\infty})$ is an element $\alpha\in \g^0$ satisfying the Maurer-Cartan equation
\begin{eqnarray}\label{MC-equation}
\sum_{k=1}^{+\infty}\frac{1}{k!}l_k(\alpha,\cdots,\alpha)=0.
\end{eqnarray}
\end{defi}

\begin{rmk}
In general, the Maurer-Cartan equation \eqref{MC-equation} makes sense when the $L_\infty$-algebra is a filtered $L_\infty$-algebra \cite{DR}.  In the following,  the $L_\infty$-algebra under consideration satisfies  $l_k=0$ for $k$ sufficiently big, so  the Maurer-Cartan equation makes sense.
\end{rmk}

Let $\alpha$ be a Maurer-Cartan element. Define $l_{k}^{\alpha}:\otimes^{k}\g\lon\g ~~(k\geq1)$ by
\begin{equation}\label{eqdeftw}
l_{k}^{\alpha}(x_1,\cdots,x_k)=\sum_{n=0}^{+\infty}\frac{1}{n!}l_{k+n}(\underbrace{\alpha,\cdots,\alpha}_{n},x_1,\cdots,x_k).
\end{equation}

\begin{thm}\label{twistLin}{\rm(\cite{Get})}
  $(\g,\{l_k^{\alpha}\}_{k=1}^{+\infty})$ is an $L_\infty$-algebra, called the twisted $L_\infty$-algebra.
\end{thm}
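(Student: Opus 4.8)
The plan is to verify the two defining axioms of an $L_\infty$-algebra for the twisted maps $\{l_k^\alpha\}_{k\ge 1}$. \emph{Graded symmetry} is immediate: in each summand $l_{k+n}(\alpha,\cdots,\alpha,x_1,\cdots,x_k)$ of \eqref{eqdeftw} the copies of $\alpha$ occupy fixed slots and $\alpha$ has degree $0$, so permuting $x_1,\cdots,x_k$ among themselves produces exactly the Koszul sign $\varepsilon(\sigma)$, uniformly in $n$; hence the whole sum is graded symmetric. The substance of the theorem is the \emph{generalized Jacobi identity} for $\{l_k^\alpha\}$, and I would establish it conceptually rather than by brute force.

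I would pass to the codifferential picture. Encode the collection $\{l_k\}_{k\ge 1}$ as a single degree $1$ coderivation $D$ of the reduced cofree cocommutative coalgebra $\overline{\Sym}(\g)$, namely the unique coderivation whose projection onto the cogenerators $\g$ equals $\sum_{k\ge 1}l_k$; the two $L_\infty$-axioms taken together are then equivalent to the single equation $D^2=0$. A degree $0$ element $\alpha$ determines the coalgebra automorphism $e_\alpha:=\exp(\alpha\vee\cdot)$ given by insertion of $\alpha$'s, and a direct check shows that the conjugated coderivation $D^\alpha:=e_{-\alpha}\circ D\circ e_\alpha$ has Taylor components precisely the maps $l_k^\alpha$ of \eqref{eqdeftw}, together with one constant (arity-zero) term equal to $\sum_{n\ge 0}\tfrac1{n!}l_n(\alpha,\cdots,\alpha)$. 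Since $D^\alpha$ is conjugate to $D$, we obtain $(D^\alpha)^2=e_{-\alpha}\,D^2\,e_\alpha=0$ for free.

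It then remains to dispose of the constant term, and this is exactly where the Maurer-Cartan equation \eqref{MC-equation} enters: it asserts that the arity-zero component of $D^\alpha$ vanishes, so $D^\alpha$ restricts to a genuine degree $1$ codifferential of $\overline{\Sym}(\g)$ with no curvature, that is, an honest $L_\infty$-structure whose structure maps are $\{l_k^\alpha\}_{k\ge 1}$. Translating $(D^\alpha)^2=0$ back through the co-Leibniz rule yields the generalized Jacobi identities for the twisted brackets, which completes the argument.

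I expect the main obstacle to be the bookkeeping in the middle step: confirming that conjugation by $e_\alpha$ reproduces formula \eqref{eqdeftw} on the nose, with the correct coefficients $\tfrac1{n!}$ and the correct placement of the $\alpha$-slots. If one prefers to avoid the coalgebra formalism, the same identity can be proved directly by substituting \eqref{eqdeftw} into the twisted Jacobi sum and regrouping all terms by the total number $m$ of inserted copies of $\alpha$; for each fixed $m$ the regrouped terms assemble into the original generalized Jacobi identity of arity $k+m$ evaluated at $(\alpha,\cdots,\alpha,x_1,\cdots,x_k)$, which vanishes, while the terms in which the inner bracket absorbs only $\alpha$'s reassemble into the Maurer-Cartan expression and hence vanish as well. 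The delicate point in this alternative is again reconciling the multinomial coefficients produced by the two nested sums with those coming from the single Jacobi identity of higher arity.
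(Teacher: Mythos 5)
The paper gives no proof of this theorem at all: it is quoted from Getzler \cite{Get} (see also \cite{DR}), so your argument must be judged on its own terms, and on those terms it is correct and is in fact the standard proof of the twisting theorem. Your treatment of graded symmetry is fine (since $\alpha\in\g^0$, the inserted slots contribute no Koszul signs), and the codifferential strategy is the right one; only two points in the middle step should be made explicit rather than dismissed as bookkeeping. First, the insertion exponential $e_\alpha=\exp(\alpha\vee\cdot)$ raises arity without bound, so it is not an endomorphism of the reduced coalgebra $\overline{\Sym}(\g)$: the conjugation $D^\alpha=e_{-\alpha}\circ D\circ e_\alpha$ only makes sense on the completed counital coalgebra $\prod_{k\geq 0}\Sym^k\g$, and the series \eqref{eqdeftw} and \eqref{MC-equation} themselves converge only under a hypothesis such as the one in the remark preceding the theorem ($l_k=0$ for $k$ large, or the filtered setting of \cite{DR}); a complete write-up must invoke this. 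Second, before imposing \eqref{MC-equation} the conjugate $D^\alpha$ carries a nonzero arity-zero (curvature) component, and $(D^\alpha)^2=0$ on the completion does not by itself yield the generalized Jacobi identities for $\{l_k^\alpha\}$, because projecting $(D^\alpha)^2$ to the cogenerators produces, besides the Jacobi sums, cross terms in which one bracket absorbs the curvature; it is precisely the Maurer--Cartan equation that annihilates these cross terms and lets $D^\alpha$ restrict to a square-zero coderivation of $\overline{\Sym}(\g)$ --- your sketch places this correctly. Your fallback direct computation is also sound as described: grouping the twisted Jacobi sum by the total number $m$ of inserted copies of $\alpha$, the coefficients match via $\frac{1}{m!}\binom{m}{a}=\frac{1}{a!\,(m-a)!}$, the signs are unaffected because $\alpha$ has degree $0$, and the leftover terms whose inner bracket contains only $\alpha$'s reassemble into a twisted bracket applied to the Maurer--Cartan expression, which vanishes. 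With the completion/convergence caveat added, either route is a complete proof of the quoted theorem.
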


Now we recall the V-data and derived brackets, which are very powerful tool to construct explicit $L_\infty$-algebras.

\begin{defi}
A $V$-data consists of a quadruple $(L,F,P,\Delta)$, where
\begin{itemize}
\item[$\bullet$] $(L,[\cdot,\cdot])$ is a graded Lie algebra,
\item[$\bullet$] $F$ is an abelian graded Lie subalgebra of $(L,[\cdot,\cdot])$,
\item[$\bullet$] $P:L\lon L$ is a projection, that is $P\circ P=P$, whose image is $F$ and its kernel is a  graded Lie subalgebra of $(L,[\cdot,\cdot])$,
\item[$\bullet$] $\Delta$ is an element in $  \ker(P)^1$ such that $[\Delta,\Delta]=0$.
\end{itemize}
\end{defi}

\begin{thm}\label{L11}{\rm (\cite{Vo})}
Let $(L,F,P,\Delta)$ be a $V$-data. Then $(F,\{{l_k}\}_{k=1}^{+\infty})$ is an $L_\infty$-algebra where
\begin{eqnarray*}
l_k(a_1,\cdots,a_k)=P\underbrace{[\cdots[[}_k\Delta,a_1],a_2],\cdots,a_k],\quad\mbox{for homogeneous}~   a_1,\cdots,a_k\in F.
\end{eqnarray*}
\end{thm}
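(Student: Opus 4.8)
The plan is to verify the two defining axioms of an $L_\infty$-algebra directly for the multibrackets $l_k(a_1,\cdots,a_k)=P[\cdots[[\Delta,a_1],a_2],\cdots,a_k]$, after recording two structural consequences of the $V$-data axioms. First, set $D:=[\Delta,\cdot]:L\lon L$; since $[\Delta,\Delta]=0$, the graded Jacobi identity gives $2[\Delta,[\Delta,x]]=[[\Delta,\Delta],x]=0$, so $D$ is a degree $1$ derivation of $[\cdot,\cdot]$ with $D^2=0$, i.e. $(L,[\cdot,\cdot],D)$ is a differential graded Lie algebra. Second --- and this is the key lemma --- because $\Delta\in\ker(P)$ and $\ker(P)$ is a graded Lie subalgebra, we have $D(\ker P)=[\Delta,\ker P]\subseteq \ker P$; equivalently $P\circ D$ vanishes on $\ker P$. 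Writing $R_a:=[\cdot,a]$, one notes that $l_k(a_1,\cdots,a_k)=P\,R_{a_k}\cdots R_{a_1}\Delta$ is a map of degree $1$, since $\Delta$ has degree $1$.

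Graded symmetry is the easy axiom. Because $F$ is abelian, $[a_i,a_j]=0$ for all $a_i,a_j\in F$, and the graded Jacobi identity yields $R_{a_{i+1}}R_{a_i}-(-1)^{|a_i||a_{i+1}|}R_{a_i}R_{a_{i+1}}=R_{[a_i,a_{i+1}]}=0$. Thus adjacent operators $R_{a_i}$ graded-commute inside the iterated bracket, so $l_k$ is graded symmetric in $a_1,\cdots,a_k$ with exactly the Koszul sign $\varepsilon(\sigma)$.

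The generalized Jacobi identity is the heart of the proof. The plan is to reduce the infinite family of identities to a single generating-function identity and then to establish that one identity using the differential graded Lie structure. Since everything is graded symmetric and we work over $\R$, by a standard polarization argument in characteristic zero it suffices to prove, for every even $a\in F^0$, the curvature (Bianchi) identity
\[
\sum_{j=0}^{+\infty}\frac{1}{j!}\,l_{j+1}(\mathcal R(a),\underbrace{a,\cdots,a}_{j})=0,\qquad \mathcal R(a):=\sum_{k=1}^{+\infty}\frac{1}{k!}\,l_k(\underbrace{a,\cdots,a}_{k}),
\]
this being the evaluation of the generalized Jacobi identities on $a^{\otimes n}$ summed over $n$ (equivalently, the statement that the codifferential induced by $\{l_k\}$ on the symmetric coalgebra squares to zero). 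Using $P\Delta=0$ one computes the generating functions $\mathcal R(a)=P\,e^{R_a}\Delta$ and $\sum_{j}\frac{1}{j!}l_{j+1}(x,\underbrace{a,\cdots,a}_{j})=P\,e^{R_a}[\Delta,x]$. Set $u:=e^{R_a}\Delta$; since $a$ is even, $e^{R_a}$ is a Lie algebra automorphism of $L$, whence $[u,u]=e^{R_a}[\Delta,\Delta]=0$ and $u\in e^{R_a}\ker P$. The Bianchi left-hand side then becomes $P[u,\,e^{R_a}(Pu)]$, and the task is to show this vanishes.

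The main obstacle is precisely this last vanishing: controlling the interaction between the projector $P$ and the twisting automorphism $e^{R_a}$. The plan is to split $Pu=u-(1-P)u$ with $(1-P)u\in\ker P$, and to push the $\ker(P)$-component through the bracket using the $D$-invariance of $\ker P$ (the key lemma) together with the master equation $[u,u]=0$; the abelian condition, in the form $[Pu,Pu]=0$, disposes of the top term. I expect the cleanest way to close this is order-by-order in the formal parameter $a$, or equivalently by the induction on arity that underlies Voronov's original argument, where at each stage the only inputs are the derivation property of $D$, the identity $D^2=0$, and $[\Delta,\ker P]\subseteq\ker P$. As a byproduct, the identity $\mathcal R(a)=P\,e^{R_a}\Delta$ shows that the Maurer-Cartan elements of $(F,\{l_k\})$ are exactly the even $a\in F$ with $P(e^{[\cdot,a]}\Delta)=0$, which is the characterization exploited in the sequel.
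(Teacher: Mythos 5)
You should first note that the paper itself offers no proof of this statement: it is imported verbatim from Voronov's paper \cite{Vo}, so the benchmark is Voronov's original argument. Your preliminary reductions are all correct and well organized: $D=[\Delta,\cdot]$ squares to zero since $[\Delta,\Delta]=0$; the subalgebra axiom for $\ker(P)$ together with $\Delta\in\ker(P)$ gives $[\Delta,\ker P]\subseteq\ker P$; graded symmetry follows from $R_bR_a-(-1)^{|a||b|}R_aR_b=R_{[a,b]}=0$ on the abelian $F$; and, using $P\Delta=0$, the generating-function identities $\mathcal{R}(a)=Pe^{R_a}\Delta$ and $\sum_j\frac{1}{j!}l_{j+1}(x,a,\cdots,a)=Pe^{R_a}[\Delta,x]$ correctly repackage the entire family of generalized Jacobi identities (after inserting a formal parameter $t$ and, for the polarization, extending scalars by auxiliary odd parameters --- your phrase ``for every even $a\in F^0$'' taken literally only reaches degree-zero arguments, a gloss you should repair) into the single vanishing statement $P\left[u,\,e^{R_a}Pu\right]=0$ with $u=e^{R_a}\Delta$.

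The genuine gap is that this vanishing is never proved: at exactly the point where the theorem lives, you write that you ``expect'' it to close order-by-order ``by the induction on arity that underlies Voronov's original argument.'' Deferring to the argument you are supposed to reconstruct means the proposal contains a reduction, not a proof. Moreover, the sketched mechanism (``split $Pu=u-(1-P)u$ and push the $\ker P$-component through'') is not self-propelled. Already at order $t^2$, with $w=[\Delta,a]$ and $\bar w=(1-P)w$, the required cancellation is
\[
\tfrac{1}{2}P[\Delta,P[w,a]]+P[[\Delta,Pw],a]=0,
\]
and verifying it needs, beyond your three declared inputs ($D^2=0$, the derivation property, $[\Delta,\ker P]\subseteq\ker P$), two further moves: the abelian condition in the form $[Pw,a]=0$, so that $[w,a]=[\bar w,a]$ and hence $P[\Delta,[\bar w,a]]=-P[w,w]$, and then a second application of the Jacobi identity combined with $[w,w]$-type consequences of $[\Delta,\Delta]=0$ to obtain $P[[\Delta,\bar w],a]=-P[\bar w,Pw]$. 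These cross-cancellations between the $\ker P$- and $F$-components, and the demonstration that they propagate coherently to all arities, are precisely the combinatorial heart of Voronov's theorem; nothing in your proposal supplies them. To complete the proof you would need to carry out the induction explicitly (or derive and solve a differential equation in $t$ for $\Phi(t)=Pe^{tR_a}[\Delta,Pe^{tR_a}\Delta]$), rather than assert its existence.
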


Moreover, there is an $L_\infty$-algebra structure on $s^{-1}L\oplus F$ constructed as following.

\begin{thm}\label{defili}{\rm (\cite{Vo,Fregier-Zambon-1})}
Let $(L,F,P,\Delta)$ be a $V$-data. Then the graded vector space $s^{-1}L\oplus F$  is an $L_\infty$-algebra where the nontrivial products, called the higher derived brackets, are given by
\begin{eqnarray*}\label{V-shla-big-algebra}
l_1(s^{-1}f,\theta)&=&(-s^{-1}[\Delta,f],P(f+[\Delta,\theta])),\\
l_2(s^{-1}f,s^{-1}g)&=&(-1)^fs^{-1}[f,g],\\
l_k(s^{-1}f,\theta_1,\cdots,\theta_{k-1})&=&P[\cdots[[f,\theta_1],\theta_2]\cdots,\theta_{k-1}],\quad k\geq 2,\\
l_k(\theta_1,\cdots,\theta_{k-1},\theta_k)&=&P[\cdots[[\Delta,\theta_1],\theta_2]\cdots,\theta_{k}],\quad k\geq 2.
\end{eqnarray*}
Here $\theta,\theta_1,\cdots,\theta_k$ are homogeneous elements of $F$ and $f,g$ are homogeneous elements of $L$. 

Moreover, if $L'$ is a graded Lie subalgebra of $L$ that satisfies $[\Delta,L']\subset L'$, then $s^{-1}L'\oplus F$ is an $L_\infty$-subalgebra of the above $L_\infty$-algebra $(s^{-1}L\oplus F,\{l_k\}_{k=1}^{+\infty})$.
\end{thm}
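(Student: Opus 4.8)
The plan is to verify the defining axioms of an $L_\infty$-algebra---graded symmetry and the generalized Jacobi identities---directly for the explicit brackets $\{l_k\}$, organizing the computation by the number of arguments that lie in the summand $s^{-1}L$. I note at the outset that this does \emph{not} reduce to simply quoting Theorem~\ref{L11}: the brackets are not all of the derived form $P[\cdots[[\Delta,a_1],a_2]\cdots,a_k]$, since $l_2(s^{-1}f,s^{-1}g)=(-1)^f s^{-1}[f,g]$ is independent of $\Delta$ and $l_k(s^{-1}f,\theta_1,\cdots,\theta_{k-1})=P[\cdots[[f,\theta_1],\cdots],\theta_{k-1}]$ begins with $f$ rather than $\Delta$. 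The obvious attempt to realize everything as derived brackets of one enlarged $V$-data---the semidirect product $L\ltimes s^{-1}L$ with abelian part $F\oplus s^{-1}L$---is inadmissible, because $F$ acts nontrivially on $s^{-1}L$ and so $F\oplus s^{-1}L$ is not an abelian subalgebra. I would therefore argue by hand, while \emph{using} Theorem~\ref{L11} to dispatch one sector for free.

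Graded symmetry is read off the formulas; the only delicate point is the sign $(-1)^f$ in $l_2$, which is exactly what restores graded symmetry after the desuspension shifts every degree by $-1$ (a one-line Koszul check using $[f,g]=-(-1)^{|f||g|}[g,f]$). For the generalized Jacobi identities I would split according to the total number $m$ of arguments lying in $s^{-1}L$. Because a bracket vanishes as soon as two or more of its inputs lie in $s^{-1}L$---with the sole exception of $l_2$, whose output again lies in $s^{-1}L$---every term with $m\ge 4$ vanishes, so only $m\le 3$ needs checking. The sector $m=0$ involves only elements of $F$ and is precisely the generalized Jacobi identity of the Voronov $L_\infty$-structure on $F$, hence is covered by Theorem~\ref{L11}. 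The sector $m=3$ collapses, after discarding the vanishing higher brackets, to the graded Jacobi identity of $[\cdot,\cdot]$ in $L$ applied to $[[f,g],h]$ and its cyclic images.

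The substance is the mixed sectors $m=1$ and $m=2$, and all the relations there unwind to four structural facts of the $V$-data: graded Jacobi in $L$, the equation $[\Delta,\Delta]=0$, the abelianness of $F=\Img P$, and the closure of $\ker P$ under $[\cdot,\cdot]$ (with $\Delta\in\ker P$). For example, the identity $l_1\circ l_1=0$ on $s^{-1}L$ produces the term $s^{-1}[\Delta,[\Delta,f]]$, which vanishes by $[\Delta,\Delta]=0$, together with a term $P[\Delta,(1-P)f]$, which vanishes because $\Delta$ and $(1-P)f$ both lie in the subalgebra $\ker P$. The $m=2$ relations express that $l_2$ on $s^{-1}L$ is the shifted bracket of $L$ and is compatible with $l_1$ and with the cross-brackets of type $P[\cdots[[f,\theta_1],\cdots]$; their $F$-components reduce to identities of the shape $P[f,g]=P[Pf,g]+P[f,Pg]$, which I would prove by writing $f=Pf+(1-P)f$ and invoking $[Pf,Pg]=0$ and $P[(1-P)f,(1-P)g]=0$. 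The hard part will be the sector $m=1$ with $n$ large: there one must show the cross-brackets $P[\cdots[[f,\theta_1],\cdots],\theta_{k-1}]$ are compatible with Voronov's brackets $P[\cdots[[\Delta,\theta_1],\cdots],\theta_k]$ and with the connecting term $P(f)$ in $l_1$. The computation runs parallel to the proof of Theorem~\ref{L11} but with $f$ occupying the innermost slot in place of $\Delta$ and an extra family of terms generated by $P(f)$; the real labor is the Koszul-sign bookkeeping and the repeated use of $P[x,y]=P[Px,y]+P[x,Py]$ to commute $P$ past nested brackets.

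Finally, the ``moreover'' clause is immediate once the brackets are established. If $L'\subseteq L$ is a graded Lie subalgebra with $[\Delta,L']\subseteq L'$, then every structure map preserves $s^{-1}L'\oplus F$: the $s^{-1}L$-component $-s^{-1}[\Delta,f]$ of $l_1(s^{-1}f,\theta)$ lies in $s^{-1}L'$ by hypothesis, the $s^{-1}L$-component $(-1)^f s^{-1}[f,g]$ of $l_2(s^{-1}f,s^{-1}g)$ lies in $s^{-1}L'$ since $L'$ is a subalgebra, and all brackets of the form $P[\cdots]$ land in $F$. Hence $s^{-1}L'\oplus F$ is closed under $\{l_k\}_{k=1}^{+\infty}$ and is an $L_\infty$-subalgebra.
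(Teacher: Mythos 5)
The paper itself offers no proof of this theorem: it is quoted from \cite{Vo,Fregier-Zambon-1}, so there is no internal argument to compare yours against, and your proposal must be judged on its own terms. On those terms the skeleton is sound, and every reduction you state explicitly does check out: the sign $(-1)^f$ in $l_2$ is exactly what graded symmetry demands after desuspension; every term of the generalized Jacobi identity with $m\geq 4$ inputs from $s^{-1}L$ vanishes (the only bracket accepting two $s^{-1}L$-inputs is $l_2$, and its output in $s^{-1}L$ then overloads the outer bracket); the $m=0$ sector is verbatim the generalized Jacobi identity of Theorem \ref{L11}, which you may legitimately quote; the $m=3$ sector survives only at arity $3$, where it is the shifted Jacobi identity of $[\cdot,\cdot]$. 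Your $l_1\circ l_1$ computation is correct, including the observation that the $F$-component collapses to $P[\Delta,(1-P)f]=0$ because $\Delta$ and $(1-P)f$ lie in the subalgebra $\ker(P)$, and your key lemma $P[x,y]=P[Px,y]+P[x,Py]$ is valid for precisely the reasons you give (abelianness of $F$ kills $[Px,Py]$, the kernel being a subalgebra kills $P[(1-P)x,(1-P)y]$). Your preliminary remark that one cannot simply feed the semidirect product $L\ltimes s^{-1}L$ back into Theorem \ref{L11} --- because $[F,s^{-1}L]\subset s^{-1}L$ is nonzero, so $F\oplus s^{-1}L$ is not abelian --- is also correct and worth making. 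The ``moreover'' clause is complete as written: closure of $s^{-1}L'\oplus F$ under each listed bracket is immediate from $[L',L']\subset L'$, $[\Delta,L']\subset L'$, and the fact that all $P[\cdots]$-brackets land in $F$.

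The caveat is that what you defer as ``the real labor'' is in fact the main content of the theorem. The $m=1$ sector at arbitrary arity --- the compatibility of the mixed brackets $P[\cdots[[f,\theta_1],\theta_2]\cdots,\theta_{k-1}]$ with the Voronov brackets, with the term $Pf$ in $l_1$, and with the $s^{-1}[\Delta,f]$-component --- is only promised, not performed; and the phrase ``runs parallel to the proof of Theorem \ref{L11}'' has no referent inside this paper, since Theorem \ref{L11} is likewise quoted without proof. If you attempt it by brute induction with explicit Koszul signs, this is exactly where such verifications usually stall. The device that makes it tractable in Voronov's own treatment is a master identity covering all arities at once: for the pure-$F$ part one shows the full Jacobiator of the derived brackets is itself the derived-bracket expression of $\frac{1}{2}[\Delta,\Delta]$, hence vanishes, and an identity of the same closed form, with $f$ occupying the innermost slot, disposes of the mixed sector. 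So I would record the $m=1$ sector as a genuine, though fillable, gap: your decomposition, sign check, and structural lemmas are all correct, but the proof as proposed is a correct plan rather than a proof, and you should either import Voronov's master identity or write out the induction in full.
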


\subsection{Maurer-Cartan characterization of $3$-Lie algebra morphisms}
\begin{defi}\cite{FI}
A $3$-Lie algebra is a vector space $\g$ with a linear map $\pi:\wedge^{3}\g\lon\g$ such that
$$
\pi(x_1, x_2, \pi(x_3, x_4, x_5))=\pi(\pi(x_1, x_2, x_3), x_4, x_5)+\pi(x_3, \pi(x_1, x_2, x_4), x_5)+\pi(x_3, x_4, \pi(x_1, x_2, x_5)),
$$
where $x_i\in\g, 1\leq i\leq 5.$
\end{defi}

\begin{defi}
Let $(\g, \pi)$ and $(\h, \mu)$ be $3$-Lie algebras. A $3$-Lie algebra morphism is a linear map $f:\g\lon\h$ such that
$$
f(\pi(x, y, z))=\mu(f(x), f(y), f(z)), \quad \forall x, y, z\in\g.
$$
\end{defi}

Let $\g$ be a vector space. Consider the following graded vector space
$$
C^{*}_{\text{3-Lie}}(\g, \g)=\oplus_{n\geq 0}C^{n}(\g, \g)=\oplus_{n\geq 0}\Hom(\underbrace{\wedge^{2}\g\otimes\cdots\otimes\wedge^{2}\g}_n\wedge\g, \g), \quad (n\geq 0).
$$

\begin{thm}\label{MC3}{\rm (\cite{NR bracket of n-Lie})}
With the above notations, $(C^{*}_{\text{\rm{3-Lie}}}(\g, \g), [\cdot, \cdot]_{\Ri})$ is a graded Lie algebra, where
\begin{eqnarray}\label{3-Lie-bracket}
[P,Q]_{\Ri}=P{\circ}Q-(-1)^{pq}Q{\circ}P,\quad \forall~ P\in C^{p}(\g,\g),Q\in C^{q}(\g,\g),
\end{eqnarray}
and $P{\circ}Q\in C^{p+q}(\g,\g)$ is defined by

\begin{small}
\begin{equation*}
\begin{aligned}
&(P{\circ}Q)(\mathfrak{X}_1,\cdots,\mathfrak{X}_{p+q},x)\\
=&\sum_{k=1}^{p}(-1)^{(k-1)q}\sum_{\sigma\in \mathbb S(k-1,q)}(-1)^\sigma P\Big(\mathfrak{X}_{\sigma(1)},\cdots,\mathfrak{X}_{\sigma(k-1)},
Q\big(\mathfrak{X}_{\sigma(k)},\cdots,\mathfrak{X}_{\sigma(k+q-1)},x_{k+q}\big)\wedge y_{k+q},\mathfrak{X}_{k+q+1},\cdots,\mathfrak{X}_{p+q},x\Big)\\
&+\sum_{k=1}^{p}(-1)^{(k-1)q}\sum_{\sigma\in \mathbb S(k-1,q)}(-1)^\sigma P\Big(\mathfrak{X}_{\sigma(1)},\cdots,\mathfrak{X}_{\sigma(k-1)},x_{k+q}\wedge
Q\big(\mathfrak{X}_{\sigma(k)},\cdots,\mathfrak{X}_{\sigma(k+q-1)},y_{k+q}\big),\mathfrak{X}_{k+q+1},\cdots,\mathfrak{X}_{p+q},x\Big)\\
&+\sum_{\sigma\in \mathbb S(p,q)}(-1)^{pq}(-1)^\sigma P\Big(\mathfrak{X}_{\sigma(1)},\cdots,\mathfrak{X}_{\sigma(p)},
Q\big(\mathfrak{X}_{\sigma(p+1)},\cdots,\mathfrak{X}_{\sigma(p+q-1)},\mathfrak{X}_{\sigma(p+q)},x\big)\Big),\\
\end{aligned}
\end{equation*}
\end{small}
 for all $\mathfrak{X}_{i}=x_i\wedge y_i\in \wedge^2 \g$, $~i=1,2,\cdots,p+q$ and $x\in\g.$

  Moreover,  $\pi:\wedge^3\g\longrightarrow\g$ is a $3$-Lie algebra on $\g$ if and only if $[\pi,\pi]_{\Ri}=0$, i.e. $\pi$ is a Maurer-Cartan element of the graded Lie algebra $(C^{*}_{\text{\rm{3-Lie}}}(\g, \g),[\cdot,\cdot]_{\Ri})$.
  \end{thm}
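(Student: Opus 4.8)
The statement has two independent parts, and I would keep the bulk of the effort on the first. The bracket $[\cdot,\cdot]_{\Ri}$ is, by \eqref{3-Lie-bracket}, nothing but the graded commutator of the product $\circ$, so the graded skew-symmetry $[P,Q]_{\Ri}=-(-1)^{pq}[Q,P]_{\Ri}$ is immediate from the definition and needs no computation. The only real content of the graded Lie algebra claim is therefore the graded Jacobi identity. The cleanest route to it is to avoid verifying Jacobi directly and instead prove that $\circ$ endows $C^{*}_{\text{3-Lie}}(\g,\g)$ with the structure of a graded right pre-Lie algebra, i.e. that its associator is graded symmetric in the last two arguments,
\[
(P\circ Q)\circ R-P\circ(Q\circ R)=(-1)^{qr}\big((P\circ R)\circ Q-P\circ(R\circ Q)\big),
\]
and then to invoke the standard fact that the graded commutator of any graded pre-Lie algebra is automatically a graded Lie algebra. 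This reduces the whole of part one to a single identity.

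To establish the pre-Lie identity I would read $\circ$ as an \emph{insertion} operation: $P\circ Q$ is the signed sum over all ways of plugging the single output of $Q$ into one of the slots of $P$, either as the first or the second factor of a fundamental object $\mathfrak{X}_i=x_i\wedge y_i$ (the first two sums in the formula) or as the terminal $\g$-argument (the third sum), with the shuffles $\mathbb S(k-1,q)$ and $\mathbb S(p,q)$ recording the reordering of the fundamental objects. Expanding $(P\circ Q)\circ R$ then splits according to whether $R$ is inserted into $P$ or into the copy of $Q$ already sitting inside $P$. The terms in which $R$ lands inside $Q$ reassemble exactly into $P\circ(Q\circ R)$, so they cancel in the associator; the surviving terms are those in which both $Q$ and $R$ are inserted into (distinct or nested) slots of $P$, and these are visibly interchanged under $Q\leftrightarrow R$ up to the Koszul sign $(-1)^{qr}$. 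I expect this to be the main obstacle: the difficulty is entirely combinatorial bookkeeping, namely matching the nested shuffle sets, tracking which slot of $P$ receives each insertion, and checking that the accumulated sign factors $(-1)^{(k-1)q}$, $(-1)^{pq}$ and the shuffle signs $(-1)^{\sigma}$ combine to give precisely the symmetry above. The computation is long but routine, and once it is done both the Jacobi identity and part one follow formally.

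For the Maurer–Cartan characterization I would argue by direct evaluation. Since $\pi\in C^{1}(\g,\g)$, taking $p=q=1$ in \eqref{3-Lie-bracket} gives $[\pi,\pi]_{\Ri}=2\,\pi\circ\pi$, so $[\pi,\pi]_{\Ri}=0$ if and only if $\pi\circ\pi=0$. Plugging $\mathfrak{X}_1=x_1\wedge y_1$, $\mathfrak{X}_2=x_2\wedge y_2$ and $x\in\g$ into the formula for $P\circ Q$, the first two sums (with $\mathbb S(0,1)=\{\mathrm{id}\}$) produce $\pi(\pi(x_1,y_1,x_2),y_2,x)$ and $\pi(x_2,\pi(x_1,y_1,y_2),x)$, while the third sum (over $\mathbb S(1,1)$, with the sign $(-1)^{pq}=-1$) produces $-\pi(x_1,y_1,\pi(x_2,y_2,x))+\pi(x_2,y_2,\pi(x_1,y_1,x))$, where I use the skew-symmetry of $\pi$ to rewrite $\pi(\,\cdot\wedge\cdot,\,\cdot\,)$ in the antisymmetrized form. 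Writing $a=x_1,\ b=y_1,\ u=x_2,\ v=y_2,\ w=x$, the vanishing of $(\pi\circ\pi)(\mathfrak{X}_1,\mathfrak{X}_2,x)$ becomes
\[
\pi(a,b,\pi(u,v,w))=\pi(\pi(a,b,u),v,w)+\pi(u,\pi(a,b,v),w)+\pi(u,v,\pi(a,b,w)),
\]
which is exactly the Filippov fundamental identity. Hence $\pi$ is a $3$-Lie bracket precisely when it is a Maurer–Cartan element of $(C^{*}_{\text{3-Lie}}(\g,\g),[\cdot,\cdot]_{\Ri})$, completing the proof; this half is a short explicit computation once the graded Lie algebra structure from part one is available.
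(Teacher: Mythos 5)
Your proposal is correct, but a point of comparison worth knowing: the paper itself offers no proof of Theorem \ref{MC3} --- it is quoted verbatim from Rotkiewicz \cite{NR bracket of n-Lie} --- so your blind attempt is in effect a self-contained reconstruction rather than a rival to an in-paper argument. Your two-step strategy is the standard mechanism for brackets of Nijenhuis--Richardson/Balavoine type: graded antisymmetry is indeed immediate from $[P,Q]_{\Ri}=P\circ Q-(-1)^{pq}Q\circ P$, and reducing the graded Jacobi identity to the graded right-symmetry of the associator of $\circ$ is exactly the right move, since $\circ$ is an insertion-type operation (output of $Q$ plugged into the first or second component of a $\wedge^2\g$-slot of $P$, or into its terminal $\g$-slot, with shuffles redistributing the remaining arguments). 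Your Maurer--Cartan computation is also correct in every detail: for $p=q=1$ one has $[\pi,\pi]_{\Ri}=2\,\pi\circ\pi$, the sets $\mathbb S(0,1)$ and $\mathbb S(1,1)$ contribute precisely the four terms you list with the signs you give, and their vanishing is exactly the Filippov fundamental identity; this is, implicitly, the same evaluation the paper performs later when it computes $[\pi,f]_{\Ri}$ and the twisted brackets.

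One refinement to your combinatorial outline, so that the bookkeeping does not trip you up when you carry it out. Your dichotomy ``$R$ lands inside $Q$ (cancels against $P\circ(Q\circ R)$) versus $Q,R$ in distinct or nested slots of $P$ (symmetric survivors)'' misses, or at least blurs, one genuinely new class of terms specific to this complex: in $(P\circ Q)\circ R$ one may substitute the output of $R$ for the wedge partner $y_{k+q}$ of the output of $Q$, producing terms of the shape
\begin{equation*}
P\bigl(\cdots,\;Q(\cdots,x_{k+q})\wedge R(\cdots,u),\;\cdots,\;x\bigr),
\end{equation*}
in which the outputs of $Q$ and $R$ occupy the \emph{two components of one and the same} $\wedge^2\g$-slot of $P$. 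These have no counterpart in $P\circ(Q\circ R)$ (there the output of $Q\circ R$ is a single element whose wedge partner is a raw variable), so they survive in the associator; fortunately they are manifestly interchanged under $Q\leftrightarrow R$ up to the Koszul sign, so right-symmetry still holds --- but they must be listed as survivors, not cancelled. Conversely, insertion of $R$ into the variable $x_{k+q}$ feeding the terminal slot of $Q$ \emph{does} reproduce the third-sum terms of $Q\circ R$ and cancels, which is presumably what you intend by ``nested.'' With that class accounted for, the sign verification $(-1)^{(k-1)q}$, $(-1)^{pq}$, $(-1)^{\sigma}$ is routine as you say, and the proof is complete.
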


In the following, we use derived brackets to construct an $L_\infty$-algebra whose Maurer-Cartan elements are $3$-Lie algebra morphisms.

Let $\g$ and $\h$ be vector spaces and $\huaG=\g\oplus\h$. Denote by $L$ the graded vector space
$$
L=\mathop{\oplus}\limits_{n\geq0}\Hom(\underbrace{\wedge^{2}\huaG\otimes\cdots\otimes\wedge^{2}\huaG}_n\wedge\huaG, \huaG).
$$
Denote by $\huaG^{l,k}$ the subspace of
$\otimes ^n(\wedge^2(\g\oplus \h))\wedge(\g\oplus \h)$
where the numbers of $\g$ and $\h$ are $l$ and $k$ respectively. Then the vector space $\otimes ^n(\wedge^2(\g\oplus \h))\wedge(\g\oplus \h)$ is
expended into $\mathop{\oplus}\limits_{l + k = 2n + 1}\huaG ^{l,k}$. Thus we have that
$$
L\cong\mathop{\oplus}\limits_{n\geq0}\Big(\mathop{\sum}\limits_{l+k=2n+1}\Hom(\huaG^{l,k},\g)\oplus \Hom(\huaG^{l,k},\h)\Big)
$$
and denote this isomorphism by $\mathrm{H}: \mathop{\oplus}\limits_{n\geq0}\Big(\mathop{\sum}\limits_{l+k=2n+1}\Hom(\huaG^{l,k},\g)\oplus \Hom(\huaG^{l,k},\h)\Big)\lon L$.

For $P\in\Hom(\underbrace{\wedge^{2}\g\otimes\cdots\otimes\wedge^{2}\g}_n\wedge\g,\h)$, we have
\begin{equation}\label{eqdef01}
\mathrm{H}(P)(\mathfrak{X}_{1}, \cdots, \mathfrak{X}_{n}, (x, a))=(0, P(x_1\wedge y_1, \cdots, x_n\wedge y_n, x)),
\end{equation}
where $\mathfrak{X}_{i}=(x_i, a_i)\wedge (y_i, b_i)\in\wedge^{2}(\g\oplus\h)$.

Moreover, using this isomorphism, we can transfer the graded Lie bracket $[\cdot, \cdot]_{\Ri}$ on $L$ to
$$
\mathop{\oplus}\limits_{n\geq0}\Big(\mathop{\sum}\limits_{l+k=2n+1}\Hom(\huaG^{l,k},\g)\oplus \Hom(\huaG^{l,k},\h)\Big),
$$
which is also denoted by $[\cdot, \cdot]_{\Ri}$, i.e.
$$
[A, B]_{\Ri}=\mathrm{H}^{-1}([\mathrm{H}(A), \mathrm{H}(B)]_{\Ri}), \quad \forall A, B\in\mathop{\oplus}\limits_{n\geq0}\Big(\mathop{\sum}\limits_{l+k=2n+1}\Hom(\huaG^{l,k},\g)\oplus \Hom(\huaG^{l,k},\h)\Big).
$$
\begin{pro}\label{control}
Let $(\g, \pi)$ and $(\h, \mu)$ be $3$-Lie algebras. We have a $V$-data $(L, F, P, \Delta)$ as follows:
\begin{itemize}
  \item[$\bullet$] the graded Lie algebra $(L, [\cdot,\cdot]_{\Ri})$ is given by $$(\oplus_{n=0}^{+\infty}\Hom(\underbrace{\wedge^{2}(\g\oplus\h)\otimes\cdots\otimes\wedge^{2}(\g\oplus\h)}_n\wedge(\g\oplus\h),\g\oplus\h), [\cdot,\cdot]_{\Ri});$$
  \item[$\bullet$] the abelian graded Lie subalgebra $F$ is given by $\oplus_{n=0}^{+\infty}\Hom(\underbrace{\wedge^{2}\g\otimes\cdots\otimes\wedge^{2}\g}_n\wedge\g,\h)$;
  \item[$\bullet$] $P:L\lon L$ is the projection onto the subspace $F$;
  \item[$\bullet$] $\Delta=\pi+\mu$.
\end{itemize}
Consequently, we obtain an $L_{\infty}$-algebra $(F, \{l_k\}^{+\infty}_{k=1})$, where $l_{i}$ are given by
\begin{equation}\label{eqabli}
\begin{cases}
\begin{aligned}
l_1(a_1)&=[\pi, a_1]_{\Ri}, \\
l_2(a_1, a_2)&=0, \\
l_3(a_1, a_2, a_3)&=[[[\mu, a_1]_{\Ri}, a_2]_{\Ri}, a_3]_{\Ri}, \\
l_k&=0, \quad k\geq 4.
\end{aligned}
\end{cases}
\end{equation}
\end{pro}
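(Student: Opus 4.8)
The plan is to prove the statement in two stages: first verify that $(L,F,P,\Delta)$ satisfies the four axioms of a $V$-data, and then read off the brackets $l_k$ from the explicit formula $l_k(a_1,\cdots,a_k)=P[\cdots[[\Delta,a_1]_{\Ri},a_2]_{\Ri},\cdots,a_k]_{\Ri}$ supplied by Theorem \ref{L11}. The whole argument is driven by one bookkeeping device: for each homogeneous multilinear map occurring I record (i) whether its output lies in $\g$ or in $\h$, and (ii) how many of its arguments are of $\h$-type, i.e. the integer $k$ in the decomposition $L\cong\oplus_n\big(\sum_{l+k=2n+1}\Hom(\huaG^{l,k},\g)\oplus\Hom(\huaG^{l,k},\h)\big)$. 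In this language $F$ is precisely the space of maps with output in $\h$ and $k=0$ (all arguments of $\g$-type), and the Nijenhuis--Richardson composite $P\circ Q$ is nonzero only when the output type of $Q$ matches the type of the argument slot of $P$ into which it is inserted.

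For the $V$-data axioms: that $(L,[\cdot,\cdot]_{\Ri})$ is a graded Lie algebra is Theorem \ref{MC3} applied to the vector space $\huaG=\g\oplus\h$, and $P$ is a projection onto $F$ by construction. To see that $F$ is \emph{abelian}, note that for $a,b\in F$ the composite $a\circ b$ would require inserting the output of $b$, which lies in $\h$, into an argument slot of $a$, all of which are of $\g$-type; hence $a\circ b=b\circ a=0$ and $[a,b]_{\Ri}=0$. To see that $\ker P$ is a graded Lie subalgebra, I characterize it as the maps whose output lies in $\g$, together with the maps whose output lies in $\h$ but have at least one $\h$-type argument ($k\geq 1$); a short case analysis on the two composable maps shows this property is preserved under composition, since inserting a $\g$-valued (resp. $\h$-valued) output can only delete a $\g$-slot (resp. one $\h$-slot) of the outer map while the $\h$-slots of the inner map are added back. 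Finally $\Delta=\pi+\mu$ lies in $C^1$, hence in degree $1$; it lies in $\ker P$ because $\pi$ has output in $\g$ while $\mu$ has three $\h$-type arguments; and $[\Delta,\Delta]_{\Ri}=[\pi,\pi]_{\Ri}+2[\pi,\mu]_{\Ri}+[\mu,\mu]_{\Ri}=0$, where $[\pi,\pi]_{\Ri}=[\mu,\mu]_{\Ri}=0$ by Theorem \ref{MC3} (as $\pi,\mu$ are $3$-Lie brackets) and the cross term $[\pi,\mu]_{\Ri}$ vanishes because $\pi$ reads and outputs only in $\g$ whereas $\mu$ reads and outputs only in $\h$, so neither composite can be formed.

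The core computation is the evaluation of $l_k(a_1,\cdots,a_k)=P[\cdots[[\Delta,a_1]_{\Ri},\cdots]_{\Ri},a_k]_{\Ri}$ on $a_1,\cdots,a_k\in F$, and it is governed by how the three $\h$-slots of $\mu$ are consumed. Writing $[\Delta,a_1]_{\Ri}=[\pi,a_1]_{\Ri}+[\mu,a_1]_{\Ri}$, one checks that $[\pi,a_1]_{\Ri}$ already lies in $F$ (the surviving composite inserts the $\g$-valued output of $\pi$ into a $\g$-slot of $a_1$, so the result has output in $\h$ and all arguments of $\g$-type), which gives $l_1(a_1)=[\pi,a_1]_{\Ri}$; whereas $[\mu,a_1]_{\Ri}=\mu\circ a_1$ has output in $\h$ but two free $\h$-slots of $\mu$ remaining, so it lies in $\ker P$ and does not contribute to $l_1$. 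Iterating, each further bracket against an $a_j\in F$ (output in $\h$) fills exactly one remaining $\h$-slot of $\mu$: after bracketing with $a_2$ one $\h$-slot remains, still in $\ker P$, giving $l_2=0$; only after bracketing with $a_3$ are all three $\h$-slots filled, so the result lands back in $F$ and survives the projection, giving $l_3(a_1,a_2,a_3)=[[[\mu,a_1]_{\Ri},a_2]_{\Ri},a_3]_{\Ri}$. The $\pi$-branch dies after the first step, since $[\pi,a_1]_{\Ri}\in F$ and $F$ is abelian makes its bracket with $a_2$ vanish. For $k\geq 4$ the innermost threefold iterated bracket already lies in $F$, so the next bracket against $a_k\in F$ is zero by abelianness; hence $l_k=0$.

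The hard part will be the careful bookkeeping in the last paragraph: one must compute the iterated brackets in the full graded Lie algebra $L$ (not merely their projections), track how the signs in $[P,Q]_{\Ri}=P\circ Q-(-1)^{pq}Q\circ P$ propagate, and confirm that at each stage exactly one of the two composites survives — the one inserting a $\g$-valued output into a $\g$-slot, or an $\h$-valued output into an $\h$-slot. Since the surviving terms reassemble precisely into the iterated bracket $[[[\mu,a_1]_{\Ri},a_2]_{\Ri},a_3]_{\Ri}$, the closed form in \eqref{eqabli} follows on the nose. The conceptual content, however, is entirely the observation that $\mu$ carries exactly three $\h$-slots, which forces $l_3$ to be the top nonvanishing bracket and $l_2$ to vanish.
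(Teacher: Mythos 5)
Your proposal is correct and follows essentially the same route as the paper: verify the $V$-data axioms via the type decomposition $L\cong\oplus_n\big(\sum_{l+k=2n+1}\Hom(\huaG^{l,k},\g)\oplus\Hom(\huaG^{l,k},\h)\big)$ (abelianness of $F$ and $[\pi,\mu]_{\Ri}=0$ from output-versus-slot type mismatches, $[\pi,\pi]_{\Ri}=[\mu,\mu]_{\Ri}=0$ from Theorem \ref{MC3}), then apply Theorem \ref{L11} and read off $l_1,l_2,l_3,l_{k\geq4}$ by tracking which composites survive the projection $P$, exactly as in the paper's counting of how the three $\h$-slots of $\mu$ are consumed. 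Your explicit case analysis showing $\ker P$ is a graded Lie subalgebra is a point the paper leaves implicit, so your write-up is, if anything, slightly more complete.
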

\begin{proof}
For $Q_1, Q_2\in F$, since
$$
\mathrm{H}([Q_1, Q_2]_{\Ri})=([\mathrm{H}(Q_1), \mathrm{H}(Q_2)]_{\Ri})=\mathrm{H}(Q_1){\circ}\mathrm{H}(Q_2)-(-1)^{pq}\mathrm{H}(Q_2){\circ}\mathrm{H}(Q_1)$$
and
$$
\mathrm{Im}(\mathrm{H}(Q_1))\in(0, \h), \quad \mathrm{Im}(\mathrm{H}(Q_2))\in(0, \h),
$$
by \eqref{eqdef01}, we have that $\mathrm{H}([Q_1, Q_2]_{\Ri})=0$, which implies that $F$ is an abelian graded Lie subalgebra.

It is obvious that $\Delta\in\ker(P)^1$. Since $\pi$ and $\mu$ are $3$-Lie algebra structures on $\g$ and $\h$, by Theorem \ref{MC3}, we have
$$[\pi, \pi]_{\Ri}=[\mu, \mu]_{\Ri}=0.$$
Moreover, since
$$
\mathrm{H}([\pi, \mu]_{\Ri})=[\mathrm{H}(\pi), \mathrm{H}(\mu)]_{\Ri}=\mathrm{H}(\pi)\circ \mathrm{H}(\mu)+\mathrm{H}(\mu)\circ \mathrm{H}(\pi),
$$
and
$$
\mathrm{Im}(\mathrm{H}(\pi))\in(\g, 0), \quad \mathrm{Im}(\mathrm{H}(\mu))\in(0, \h),
$$
we have that $[\pi, \mu]_{\Ri}=0$, which implies that $[\Delta, \Delta]_{\Ri}=0$.
Therefore $(L, F, P, \Delta)$ is a $V$-data. By Theorem \ref{L11}, it follows that $(F, \{l_k\}^{+\infty}_{k=1})$ is an $L_\infty$-algebra, where
$$
l_k(a_1,\cdots,a_k)=P\underbrace{[\cdots[[}_k\pi+\mu,a_1],a_2],\cdots,a_k],\quad\mbox{for homogeneous}~   a_1,\cdots,a_k\in F.
$$
On the other hand, it is straightforward to deduce that
\begin{eqnarray*}
[\pi, a_1]_{\Ri}\in F, \quad P([\mu, a_1]_{\Ri})=0,  \quad [[\pi, a_1]_{\Ri}, a_2]_{\Ri}=0,  \quad P([[\mu, a_1]_{\Ri}, a_2]_{\Ri})=0,
\end{eqnarray*}
and $[[[\mu, a_1]_{\Ri}, a_2]_{\Ri}, a_3]_{\Ri}\in F$. Thus
\begin{eqnarray*}
l_1(a_1)=[\pi, a_1]_{\Ri}, \quad l_2(a_1, a_2)=0, \quad l_3(a_1, a_2, a_3)=[[[\mu, a_1]_{\Ri}, a_2]_{\Ri}, a_3]_{\Ri}, \quad l_k=0, \quad k\geq 4,
\end{eqnarray*}
for $a_1, a_2, a_3\in F$.
\emptycomment{
\begin{eqnarray*}
&&l_1(a_1)(\mathfrak{X}_{1},\cdots, \mathfrak{X}_{k}, x)\\
&=&(-1)^k\Big(\sum_{i=1}^{k}\sum_{j\leq i}(-1)^{j} a_1\Big(\mathfrak{X}_{1},\cdots,\hat{\mathfrak{X}_{j}},\cdots, \mathfrak{X}_{i},
\pi\big(\mathfrak{X}_{j},x_{i+1}\big)\wedge y_{i+1},\mathfrak{X}_{i+2},\cdots,\mathfrak{X}_{k+1},x\Big)\\
&&\sum_{i=1}^{k}\sum_{j\leq i}(-1)^{j} a_1\Big(\mathfrak{X}_{1},\cdots,\hat{\mathfrak{X}_{j}},\cdots, \mathfrak{X}_{i},
x_{i+1}\wedge \pi\big(\mathfrak{X}_{j},y_{i+1}\big),\mathfrak{X}_{i+2},\cdots,\mathfrak{X}_{k+1},x\Big)\\
&&-\sum_{j=1}^{k+1}(-1)^{j}a_1\Big(\mathfrak{X}_{1},\cdots,\hat{\mathfrak{X}_{j}},\cdots,\mathfrak{X}_{k+1},
\pi(\mathfrak{X}_{j},x)\Big)\Big)
\end{eqnarray*}
}
\end{proof}

Now we are ready to realize $3$-Lie algebra morphisms as Maurer-Cartan elements.

\begin{thm}\label{Maurer-Cartanm}
Let $(\g, \pi)$ and $(\h, \mu)$ be $3$-Lie algebras. Then $f\in\Hom(\g, \h)$ is a $3$-Lie algebra morphism if and only if $f$ is a Maurer-Cartan element of the $L_\infty$-algebra $$(\oplus_{n=0}^{+\infty}\Hom(\underbrace{\wedge^{2}\g\otimes\cdots\otimes\wedge^{2}\g}_n\wedge\g,\h), l_1, l_3)$$ constructed in Proposition \ref{control}.
\end{thm}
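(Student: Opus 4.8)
The plan is to evaluate the Maurer--Cartan equation \eqref{MC-equation} for the $L_\infty$-algebra $(F,l_1,l_3)$ of Proposition \ref{control} on elements of $\g$ and to recognize the resulting identity as the morphism condition. Since $f\in\Hom(\g,\h)$ lies in the $n=0$ summand of $F$, hence in degree $0$, and since by \eqref{eqabli} only $l_1$ and $l_3$ are nonzero, the infinite sum in \eqref{MC-equation} collapses to
\begin{equation*}
l_1(f)+\frac{1}{3!}\,l_3(f,f,f)=0.
\end{equation*}
Both sides lie in $\Hom(\wedge^{2}\g\wedge\g,\h)$, so it suffices to test this identity on pure elements $x,y,z\in\g$.

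The whole computation rests on one bookkeeping principle coming from the embedding $\mathrm H$ of \eqref{eqdef01}: the operator $\mathrm H(\pi)$ reads only the $\g$-components of its arguments, $\mathrm H(\mu)$ reads only the $\h$-components, while $\mathrm H(f)$ sends $\g$ into $\h$ and annihilates $\h$ (so that $f\circ f=0$). Consequently $\pi\circ f=0$ and $f\circ\mu=0$ (an $\h$-valued output is invisible to $\pi$, a $\g$-reading map annihilates an $\h$-valued output), and more generally $f\circ(\text{any }\h\text{-valued map})=0$. Moreover, because $f$ has degree $0$, every composition of the form $\,\cdot\circ f$ has $q=0$ in the formula of Theorem \ref{MC3}, so all shuffle signs $(-1)^{\sigma}$ and the prefactors $(-1)^{(k-1)q},(-1)^{pq}$ are trivial; this is what keeps the signs clean. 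From the third (and only surviving) sum in the $\circ$-formula one reads off $l_1(f)=[\pi,f]_{\Ri}=\pi\circ f-f\circ\pi=-f\circ\pi$, hence $l_1(f)(x,y,z)=-f(\pi(x,y,z))$.

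Next I would compute $l_3(f,f,f)=[[[\mu,f]_{\Ri},f]_{\Ri},f]_{\Ri}$. By the visibility principle the ``$f\circ(-)$'' half of each of the three brackets vanishes (all the maps $\mu,\,[\mu,f]_{\Ri},\,[[\mu,f]_{\Ri},f]_{\Ri}$ are $\h$-valued), so $l_3(f,f,f)=\big(((\mu\circ f)\circ f)\circ f\big)$ with no intervening signs. Each composition $\,\cdot\circ f$ inserts a copy of $f$ into each of the three arguments of $\mu$ in turn; an insertion into an argument already carrying an $f$ produces a factor $f\circ f=0$ and drops out. When the result is projected by $P$ into $F$ and evaluated on pure $\g$-inputs, every term in which some argument of $\mu$ is still fed a bare $\g$-element dies (such an argument is read via the $\h$-component, which is $0$), so only the fully substituted term $\mu(f(x),f(y),f(z))$ survives. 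Counting the orders in which the three copies of $f$ fill the three distinct slots gives multiplicity $3!$, so $l_3(f,f,f)(x,y,z)=3!\,\mu(f(x),f(y),f(z))$.

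Substituting the two computations into the collapsed Maurer--Cartan equation yields
\begin{equation*}
-f(\pi(x,y,z))+\frac{1}{3!}\cdot 3!\,\mu(f(x),f(y),f(z))=0,\qquad\forall\,x,y,z\in\g,
\end{equation*}
which is exactly the defining identity $f(\pi(x,y,z))=\mu(f(x),f(y),f(z))$ of a $3$-Lie algebra morphism; the implication clearly reverses, giving the stated equivalence. The step I expect to require the most care is the multiplicity count in the $l_3$ computation: one must verify that the three nested insertions indeed reproduce the fully symmetric substitution of $f$ into all three arguments of $\mu$ with coefficient precisely $3!$, and that the residual overall sign is $+$ so that the $\mu$-term enters positively. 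Everything else is forced by the $\g\oplus\h$-grading, which both suppresses the partially substituted terms and trivializes the signs.
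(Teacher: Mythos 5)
Your proposal is correct and follows essentially the same route as the paper: the paper's proof likewise reduces the Maurer--Cartan equation to $l_1(f)+\frac{1}{6}l_3(f,f,f)$ and computes $[\pi,f]_{\Ri}(x\wedge y,z)+\frac{1}{6}[[[\mu,f]_{\Ri},f]_{\Ri},f]_{\Ri}(x\wedge y,z)=-f(\pi(x,y,z))+\mu(f(x),f(y),f(z))$, exactly the identity you derive. Your explicit bookkeeping via the embedding $\mathrm{H}$ (the $\g$/$\h$-visibility argument, the vanishing of $l_2$ and of partially substituted terms, and the multiplicity $3!$ in $l_3(f,f,f)$) is just a spelled-out version of the computation the paper leaves implicit, with all signs and coefficients checking out.
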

\begin{proof}
For $f\in\Hom(\g, \h)$ and $x, y, z\in\g$, by Proposition \ref{control}, it follows that
\begin{eqnarray*}
&&l_1(f)(x\wedge y, z)+\frac{1}{6}l_3(f, f, f)(x\wedge y, z)\\
&=&[\pi, f]_{\Ri}(x\wedge y, z)+\frac{1}{6}[[[\mu, f]_{\Ri}, f]_{\Ri}, f]_{\Ri}(x\wedge y, z)\\
&=&-f(\pi(x, y, z))+\mu(f(x), f(y), f(z)).
\end{eqnarray*}
Thus $f\in\Hom(\g, \h)$ is a $3$-Lie algebra morphism if and only if $f$ is a Maurer-Cartan element of $(\oplus_{n=0}^{+\infty}\Hom(\underbrace{\wedge^{2}\g\otimes\cdots\otimes\wedge^{2}\g}_n\wedge\g,\h), l_1, l_3)$
\end{proof}

By Theorem \ref{twistLin}, it follows that $\Big(\oplus_{n=0}^{+\infty}\Hom(\underbrace{\wedge^{2}\g\otimes\cdots\otimes\wedge^{2}\g}_n\wedge\g,\h), \{l_k^{f}\}_{k=1}^{+\infty}\Big)$ is an $L_{\infty}$-algebra, where $l_k^{f}$ is given by
\begin{equation*}
l_k^{f}(x_1, \cdots, x_k)=\sum_{n=0}^{+\infty}\frac{1}{n!}l_{k+n}(\underbrace{f,\cdots,f}_{n},x_1,\cdots,x_k).
\end{equation*}
Then we have the following result.
\begin{cor}
Let $(\g, \pi), (\h, \mu)$ be $3$-Lie algebras, $f\in\Hom(\g, \h)$ be a $3$-Lie algebra morphism, and $f'\in\Hom(\g, \h)$ be a linear map. Then $f+f'$ is a $3$-Lie algebra morphism if and only if $f'$ is a Maurer-Cartan element of the twisted $L_\infty$-algebra $\Big(\oplus_{n=0}^{+\infty}\Hom(\underbrace{\wedge^{2}\g\otimes\cdots\otimes\wedge^{2}\g}_n\wedge\g,\h), \{l_k^{f}\}_{k=1}^{+\infty}\Big)$.
\end{cor}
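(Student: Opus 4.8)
The plan is to reduce the statement to Theorem~\ref{Maurer-Cartanm} combined with the standard fact that twisting by a Maurer-Cartan element shifts Maurer-Cartan elements accordingly. By Theorem~\ref{Maurer-Cartanm}, a linear map $g\in\Hom(\g,\h)$ is a $3$-Lie algebra morphism precisely when it satisfies the Maurer-Cartan equation of $(F,l_1,l_3)$. Applying this to $g=f+f'$, it suffices to prove that
\[
l_1(f+f')+\frac{1}{6}l_3(f+f',f+f',f+f')=0
\]
is equivalent to the Maurer-Cartan equation \eqref{MC-equation} for $f'$ in the twisted $L_\infty$-algebra $\{l_k^{f}\}$.

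First I would expand the left-hand side using multilinearity and the graded symmetry of $l_3$. Since $f$ and $f'$ both lie in degree $0$, all Koszul signs are trivial and the cubic term expands with ordinary binomial coefficients,
\[
l_3(f+f',f+f',f+f')=l_3(f,f,f)+3l_3(f,f,f')+3l_3(f,f',f')+l_3(f',f',f').
\]
Because $f$ is itself a Maurer-Cartan element, the purely-$f$ part $l_1(f)+\frac{1}{6}l_3(f,f,f)$ vanishes, so the equation for $f+f'$ collapses to
\[
l_1(f')+\frac{1}{2}l_3(f,f,f')+\frac{1}{2}l_3(f,f',f')+\frac{1}{6}l_3(f',f',f')=0.
\]

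Next I would compute the twisted brackets directly from \eqref{eqdeftw}, using $l_2=0$ and $l_k=0$ for $k\geq 4$ (so that only $l_1$ and $l_3$ contribute). This gives $l_1^{f}(x)=l_1(x)+\frac{1}{2}l_3(f,f,x)$, $l_2^{f}(x_1,x_2)=l_3(f,x_1,x_2)$, $l_3^{f}(x_1,x_2,x_3)=l_3(x_1,x_2,x_3)$, and $l_k^{f}=0$ for $k\geq 4$. Substituting these into the Maurer-Cartan equation $l_1^{f}(f')+\frac{1}{2}l_2^{f}(f',f')+\frac{1}{6}l_3^{f}(f',f',f')=0$ reproduces exactly the displayed collapsed equation, establishing the equivalence and hence the corollary.

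The argument is essentially combinatorial bookkeeping; the one point demanding care is matching the factorial weights $\frac{1}{k!}$ of the Maurer-Cartan equation against the binomial coefficients produced by the symmetric expansion of $l_3$, which I expect to be the main (but routine) obstacle. No convergence or filtration subtleties intervene, since all brackets vanish beyond $l_3$, so both Maurer-Cartan equations are genuine identities in $\Hom(\wedge^2\g\wedge\g,\h)$.
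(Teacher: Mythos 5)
Your proposal is correct and follows essentially the same route as the paper: invoke Theorem \ref{Maurer-Cartanm} for $f+f'$ and then identify the resulting Maurer-Cartan equation with the twisted one for $f'$. The only difference is presentational --- the paper simply asserts the equivalence of the two Maurer-Cartan equations (relying on the standard twisting fact behind Theorem \ref{twistLin}), whereas you verify it explicitly by the binomial expansion of $l_3$ and the computation $l_1^{f}(x)=l_1(x)+\tfrac{1}{2}l_3(f,f,x)$, $l_2^{f}(x_1,x_2)=l_3(f,x_1,x_2)$, $l_3^{f}=l_3$, $l_k^{f}=0$ for $k\geq 4$, all of which is accurate.
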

\begin{proof}
By Theorem \ref{Maurer-Cartanm}, $f+f'$ is a $3$-Lie algebra morphism if and only if
$$
    \sum_{k=1}^{+\infty}\frac{1}{k!}l_k(f+f',\cdots,f+f')=0,
$$
 which is equivalent  to
   \begin{eqnarray*}
    \sum_{k=1}^{+\infty}\frac{1}{k!}l_k^{f}(f',\cdots,f')=0,
  \end{eqnarray*}
  i.e. $f'$ is a Maurer-Cartan element of  $\Big(\oplus_{n=0}^{+\infty}\Hom(\underbrace{\wedge^{2}\g\otimes\cdots\otimes\wedge^{2}\g}_n\wedge\g,\h), \{l_k^{f}\}_{k=1}^{+\infty}\Big)$.
\end{proof}

\section{Cohomologies of $3$-Lie algebra morphisms}\label{sec:coh}
In this section, we establish the cohomology theory of a $3$-Lie algebra morphism using Getzler's twisted $L_\infty$-algebra.
\begin{defi}\cite{Ka}
Let $(\g, \pi)$ be a $3$-Lie algebra and $V$ be a vector space. A representation of $(\g, \pi)$ on $V$ is a linear map $\rho:\wedge^{2}\g\lon\Hom(V, V)$ such that
\begin{eqnarray*}
\rho(x_1, x_2)\rho(x_3, x_4)&=&\rho(\pi(x_1, x_2, x_3), x_4)+\rho(x_3, \pi(x_1, x_2, x_4))+\rho(x_3, x_4)\rho(x_1, x_2),\\
\rho(x_1, \pi(x_2, x_3, x_4))&=&\rho(x_3, x_4)\rho(x_1, x_2)-\rho(x_2, x_4)\rho(x_1, x_3)+\rho(x_2, x_3)\rho(x_1, x_4),
\end{eqnarray*}
where $x_1, x_2, x_3, x_4\in\g$.
\end{defi}

\begin{ex}
Let $(\g, \pi)$ be a $3$-Lie algebra. Then $\ad:\wedge^{2}\g\lon\Hom(\g, \g)$ is a representation, where
$$\ad_{x\wedge y}z=\pi(x, y, z), \quad \forall x, y, z\in\g.$$
It is also called the adjoint representation.
\end{ex}
Let $\h$ be a $3$-Lie subalgebra of $\g$. Consider the following short exact sequence of vector spaces:
$$
0\longrightarrow \h\stackrel{i}{\longrightarrow}\g\stackrel{p}{\longrightarrow}\g/\h\longrightarrow0.
$$
Choose a section $s:\g/\h\lon\g$, i.e. $p\circ s=\Id$. Define a map $\overline{\ad}: \wedge^{2}\h\lon\Hom(\g/\h, \g/\h)$ by
$$
\overline{\ad}_{u\wedge v}(\overline{x})=\overline{\pi(u, v, s(\overline{x}))}, \quad \forall u, v\in\h, \bar{x}\in\g/\h.
$$
\begin{pro}\label{exad}
With the above notations, the map $\overline{\ad}: \wedge^{2}\h\lon\Hom(\g/\h, \g/\h)$ is independent of the choice of sections and is a representation of $\h$ on $\g/\h$.
\end{pro}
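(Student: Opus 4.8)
The plan is to verify the two claims in sequence: first that $\overline{\ad}$ does not depend on the choice of section $s$, and then that it satisfies the two defining axioms of a representation of the $3$-Lie algebra $\h$ on the quotient $\g/\h$.

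For the independence of section, I would take two sections $s, s' : \g/\h \lon \g$, so that $p \circ s = p \circ s' = \Id$. Then for any $\bar{x} \in \g/\h$ we have $p(s(\bar{x}) - s'(\bar{x})) = 0$, hence $s(\bar{x}) - s'(\bar{x}) \in \h$ by exactness of the sequence. Writing $s(\bar{x}) = s'(\bar{x}) + h$ with $h \in \h$, and using that $\h$ is a $3$-Lie subalgebra so that $\pi(u, v, h) \in \h$ for $u, v \in \h$, I compute $\pi(u, v, s(\bar{x})) - \pi(u, v, s'(\bar{x})) = \pi(u, v, h) \in \h$. Passing to the quotient, $\overline{\pi(u, v, s(\bar{x}))} = \overline{\pi(u, v, s'(\bar{x}))}$, so the definition is independent of the section. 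This step is short and is really just bookkeeping with exactness and the subalgebra property.

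For the representation axioms, I would substitute the definition $\overline{\ad}_{u \wedge v}(\bar{x}) = \overline{\pi(u, v, s(\bar{x}))}$ into each of the two identities and reduce everything to the fundamental identity of the ambient $3$-Lie algebra $(\g, \pi)$, working modulo $\h$. The key subtlety is that when composing two copies of $\overline{\ad}$, as in $\overline{\ad}_{u_1 \wedge u_2} \overline{\ad}_{u_3 \wedge u_4}(\bar{x})$, one must apply $s$ again to the already-quotiented element $\overline{\pi(u_3, u_4, s(\bar{x}))}$; but since the result only depends on the class in $\g/\h$ and $s$ is a section, one has $s(\overline{\pi(u_3, u_4, s(\bar{x}))}) \equiv \pi(u_3, u_4, s(\bar{x})) \pmod{\h}$, and because $\pi(u_1, u_2, -)$ preserves $\h$ (again the subalgebra property), the ambiguity disappears upon projecting by $p$. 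Thus $\overline{\ad}_{u_1 \wedge u_2} \overline{\ad}_{u_3 \wedge u_4}(\bar{x}) = \overline{\pi(u_1, u_2, \pi(u_3, u_4, s(\bar{x})))}$, and similarly each term of the representation axiom matches a term of the $3$-Lie fundamental identity applied to $x_1 = u_1, x_2 = u_2, x_3 = u_3, x_4 = u_4, x_5 = s(\bar{x})$, after projecting to $\g/\h$. The second axiom follows in the same way, using the fundamental identity in the form that rewrites $\pi(u_1, \pi(u_2, u_3, u_4), -)$.

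The main obstacle, and the only genuinely non-routine point, is precisely the well-definedness issue in the composed terms: one has to be confident that replacing $s \circ p$ by the identity inside an expression of the form $\overline{\pi(u_1, u_2, \pi(u_3, u_4, s(\bar{x})))}$ introduces only an error lying in $\h$, which is then killed by the final projection. This is exactly where the hypothesis that $\h$ is a $3$-Lie \emph{subalgebra} (so that $\pi(\h, \h, \g) \not\subset \h$ in general, but $\pi(\h, \h, \h) \subset \h$, and more importantly $\pi(u_1, u_2, -)$ maps $\h$ into $\h$) is used, and the first part of the proposition guarantees the expressions are unambiguous. Once this is in place, both axioms are immediate consequences of the single $3$-Lie fundamental identity for $\pi$, read modulo $\h$.
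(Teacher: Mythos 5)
Your proposal is correct and takes essentially the same route as the paper: independence of the section follows from exactness together with the closure $\pi(\h,\h,\h)\subset\h$, and both representation axioms are reduced to the fundamental identity of $(\g,\pi)$ applied with last argument $s(\bar{x})$ and read modulo $\h$. If anything, you are more explicit than the paper's proof about the one delicate point, namely that in the composed terms $s\bigl(\overline{\pi(u_3,u_4,s(\bar{x}))}\bigr)\equiv\pi(u_3,u_4,s(\bar{x}))\pmod{\h}$ with the discrepancy killed by the final projection, a step the paper carries out silently in its displayed computation.
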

\begin{proof}
For another section $t:\g/\h\lon\g$, i.e. $p\circ t=\Id$, we have that $\mathrm{Im}(s-t)\in i(\h)$ and $\overline{\pi(u, v, s(\overline{x}))}-\overline{\pi(u, v, t(\overline{x}))}=\overline{\pi(u, v, (s-t)(\overline{x}))}=0$. Thus $\overline{\ad}: \wedge^{2}\h\lon\Hom(\g/\h, \g/\h)$ is independent of the choice of sections.

For any $u_1, u_2, u_3, u_4\in\h$ and $\overline{x}\in\g/\h$, we have that
\begin{eqnarray*}
\overline{\ad}_{u_1\wedge u_2}\overline{\ad}_{u_3\wedge u_4}(\overline{x})&=&\overline{\pi(u_1, u_2, s\Big(\overline{\pi(u_3, u_4, s(\overline{x}))}\Big))}=\overline{\pi\Big(u_1, u_2, \pi(u_3, u_4, s(\overline{x}))}\Big)\\
&=&\overline{\ad}_{\pi(u_1, u_2, u_3)\wedge u_4}(\overline{x})+\overline{\ad}_{u_3\wedge\pi(u_1, u_2, u_4)}(\overline{x})+\overline{\ad}_{u_3\wedge u_4}\overline{\ad}_{u_1\wedge u_2}(\overline{x}).
\end{eqnarray*}
Similarly, the following equation holds:
$$
\overline{\ad}_{u_1\wedge\pi(u_2, u_3, u_4)}=\overline{\ad}_{u_3\wedge u_4}\overline{\ad}_{u_1\wedge u_2}-\overline{\ad}_{x_2\wedge x_4}\overline{\ad}_{x_1\wedge x_3}+\overline{\ad}_{x_2\wedge x_3}\overline{\ad}_{x_1\wedge x_4}.
$$
Thus $\overline{\ad}: \wedge^{2}\h\lon\Hom(\g/\h, \g/\h)$ is a representation of $\h$ on $\g/\h$.
\end{proof}

Let $(V, \rho)$ be a representation of a $3$-Lie algebra $(\g, \pi)$. For $n\geq 1$, define the space of $n$-cochains $C^{n}(\g, V)$ to be
$$
C^{n}(\g, V)=\Hom(\underbrace{\wedge^{2}\g\otimes\cdots\otimes\wedge^{2}\g}_{n-1}\wedge\g, V).
$$
Define $\partial : C^{n}(\g, V)\lon C^{n+1}(\g, V)$ by
\begin{eqnarray*}
\label{eqdefp1}&&\partial(\theta)(x_1\wedge y_1, \cdots, x_n\wedge y_n, x_{n+1})\\
\nonumber&=&\sum_{1\leq i<j\leq n}(-1)^{i}\theta\Big(x_1\wedge y_1, \cdots, \widehat{x_i\wedge y_i}, \cdots, x_{j-1}\wedge y_{j-1}, \pi(x_i, y_i, x_j)\wedge y_j\\
\nonumber&&+x_j\wedge\pi(x_i, y_i, y_j), \cdots, x_{n}\wedge y_n, x_{n+1}\Big)\\
\nonumber&&+\sum_{i=1}^{n}(-1)^{i}\theta\Big(x_1\wedge y_1,\cdots, \widehat{x_i\wedge y_i}, \cdots, x_{n}\wedge y_n, \pi(x_i, y_i, x_{n+1})\Big)\\
\nonumber&&+\sum_{i=1}^{n}(-1)^{i+1}\rho(x_i, y_i)\theta(x_1\wedge y_1,\cdots, \widehat{x_i\wedge y_i}, \cdots, x_n\wedge y_n, x_{n+1})\\
\nonumber&&+(-1)^{n+1}\Big(\rho(y_n, x_{n+1})\theta(x_1\wedge y_1,\cdots, x_{n-1}\wedge y_{n-1}, x_{n})\\
&&+\rho(x_{n+1}, x_n)\theta(x_1\wedge y_1,\cdots, x_{n-1}\wedge y_{n-1}, y_{n})\Big).
\end{eqnarray*}
Then $(\oplus_{n=1}^{+\infty}C^{n}(\g, V), \rho, \partial)$ is a cochain complex. See \cite{Cas, Ta1} for more details.

\begin{defi}
The cohomology of the cochain complex $(\oplus_{n=1}^{+\infty}C^{n}(\g, V),  \partial)$ is called the cohomology of the $3$-Lie algebra $(\g, \pi)$ with coefficients in $V$. We denote the $n$-th cohomology group by $H^{n}(\g, V)$.
\end{defi}

In the following, we will introduce the cohomologies of a $3$-Lie algebra morphism via the twisted $L_\infty$-algebra governing its deformations.

Let $(\g,\pi), (\h, \mu)$ be $3$-Lie algebras and $f:\g\lon\h$ be a $3$-Lie algebra morphism. For $n\geq 0$, define the space of $n$-cochains $C^{n}(f)$ to be
\begin{equation}\label{eq:cochain}
C^{n}(f)=
\begin{cases}
(\g\wedge\g)\oplus(\h\wedge\h), & n=0;\\
\Hom(\underbrace{\wedge^{2}\g\otimes\cdots\otimes\wedge^{2}\g}_{n-1}\wedge\g,\h), & n\geq1.
\end{cases}
\end{equation}
Define the coboundary operator $\delta: C^{0}(f)\lon C^{1}(f)$ by
\begin{equation}
\delta(\huaX, \huaU)z=\sum_{i=1}^{k}\mu(u_i, v_i, f(z))-\sum_{j=1}^{l}f(\pi(x_j, y_j, z)),
\end{equation}
where $z\in\g, \huaU=\sum_{i=1}^{k}u_i\wedge v_i\in\wedge^{2}\h,~~\huaX=\sum_{j=1}^{l}x_j\wedge y_j\in\wedge^{2}\g$. Define $\delta: C^{n}(f)\lon C^{n+1}(f)$ by
\begin{equation}
\delta(\theta)=(-1)^{n-1}l_{1}^{f}(\theta)=(-1)^{n-1}(l_1(\theta)+\frac{1}{2}l_3(f, f, \theta)),
\end{equation}
where $\theta\in \Hom(\underbrace{\wedge^{2}\g\otimes\cdots\otimes\wedge^{2}\g}_{n-1}\wedge\g,\h)$.

\begin{thm}
  $(\oplus_{n=0}^{+\infty}C^{n}(f), \delta)$ is a cochain complex, i.e. $\delta\circ\delta=0$.
\end{thm}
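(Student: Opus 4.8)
The plan is to treat the two regimes of the complex separately: checking $\delta\circ\delta=0$ on $C^{n}(f)$ for $n\geq 1$ is essentially formal once Getzler's twisting is in place, while the genuinely new point is the junction $C^{0}(f)\to C^{1}(f)\to C^{2}(f)$.

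First, for $n\geq 1$ I would use that, by Theorem \ref{Maurer-Cartanm}, the morphism $f$ is a Maurer--Cartan element, so by Theorem \ref{twistLin} the twisted brackets $\{l_k^{f}\}$ form an $L_\infty$-algebra. The $n=1$ instance of the generalized Jacobi identity for this $L_\infty$-algebra is exactly $l_1^{f}\circ l_1^{f}=0$. Since $\delta=(-1)^{n-1}l_1^{f}$ on $C^{n}(f)$ for $n\geq 1$, for $\theta\in C^{n}(f)$ one computes $\delta(\delta\theta)=(-1)^{n}l_1^{f}\big((-1)^{n-1}l_1^{f}\theta\big)=-(l_1^{f}\circ l_1^{f})(\theta)=0$, so the only content here is the sign bookkeeping, which the factor $(-1)^{n-1}$ is designed to absorb.

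The main work is at the junction. I would first rewrite the degree-zero differential intrinsically: for $\huaX=\sum_j x_j\wedge y_j\in\wedge^2\g$ and $\huaU=\sum_i u_i\wedge v_i\in\wedge^2\h$, the defining formula gives $\delta(\huaX,\huaU)=\ad^{\h}_{\huaU}\circ f-f\circ\ad^{\g}_{\huaX}$, where $\ad^{\g}_{\huaX}=\sum_j\pi(x_j,y_j,\cdot)$ and $\ad^{\h}_{\huaU}=\sum_i\mu(u_i,v_i,\cdot)$ are inner derivations of $\g$ and $\h$. The key observation is that this is a tangent vector to the variety of morphisms. By the defining (Filippov) identity of a $3$-Lie algebra, $\ad^{\g}_{\huaX}$ and $\ad^{\h}_{\huaU}$ are genuine derivations, hence their exponentials $e^{t\ad^{\g}_{\huaX}}\in\Aut(\g)$ and $e^{t\ad^{\h}_{\huaU}}\in\Aut(\h)$ are one-parameter groups of $3$-Lie algebra automorphisms (working with $t$ a formal parameter if the dimensions are infinite). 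Then $f_t:=e^{t\ad^{\h}_{\huaU}}\circ f\circ e^{-t\ad^{\g}_{\huaX}}$ is a curve of $3$-Lie algebra morphisms with $f_0=f$ and $\frac{d}{dt}\big|_{t=0}f_t=\delta(\huaX,\huaU)$.

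Finally, I would differentiate the Maurer--Cartan condition along this curve. Each $f_t$ is a morphism, so by Theorem \ref{Maurer-Cartanm} it satisfies $l_1(f_t)+\tfrac16 l_3(f_t,f_t,f_t)=0$; differentiating at $t=0$ and using the graded symmetry of $l_3$ yields $l_1(\dot f_0)+\tfrac12 l_3(f,f,\dot f_0)=l_1^{f}(\dot f_0)=0$. Hence $l_1^{f}(\delta(\huaX,\huaU))=0$, and since $\delta=(-1)^{0}l_1^{f}$ on $C^{1}(f)$ this is precisely $\delta(\delta(\huaX,\huaU))=0$. The step I expect to be the main obstacle is this junction: one must recognize $\delta(\huaX,\huaU)$ as an infinitesimal conjugation (a would-be coboundary direction) and justify that such directions are automatically $l_1^{f}$-closed. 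The only technical care needed is that inner derivations of a $3$-Lie algebra exponentiate to automorphisms, which is exactly where the Filippov identity enters; an alternative, purely algebraic route would realize the whole extended complex as the $f$-twist of the $L_\infty$-algebra on $s^{-1}L'\oplus F$ from Theorem \ref{defili} for a suitable subalgebra $L'$ with $[\Delta,L']\subseteq L'$, but matching the grading shift and the projection $P$ there makes the curve argument the cleaner option.
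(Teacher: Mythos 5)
Your proposal is correct, and for $n\geq 1$ it coincides with the paper's argument: both reduce $\delta\circ\delta=0$ to $l_1^{f}\circ l_1^{f}=0$, the $n=1$ generalized Jacobi identity of the twisted $L_\infty$-algebra from Theorems \ref{Maurer-Cartanm} and \ref{twistLin}, with the sign $(-1)^{n-1}$ cancelling as you say. At the junction $C^{0}(f)\to C^{1}(f)\to C^{2}(f)$, however, you take a genuinely different route. The paper proves this case by a direct half-page expansion: it writes out $\delta(\delta(x\wedge y,u\wedge v))(x_1\wedge y_1,z)$ via the explicit formula \eqref{eqclosed}, and the eight resulting terms cancel in two groups of four, one by the Filippov identity for $\mu$ combined with the morphism property of $f$, the other by the Filippov identity for $\pi$ pushed through $f$. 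You instead recognize $\delta(\huaX,\huaU)=\ad_{\huaU}\circ f-f\circ\ad_{\huaX}$ as the velocity of the conjugated curve $f_t=e^{t\ad_{\huaU}}\circ f\circ e^{-t\ad_{\huaX}}$ and differentiate the Maurer--Cartan equation, obtaining $l_1^{f}(\dot f_0)=0$; your use of the Filippov identity is concentrated in the single fact that inner maps are derivations, hence exponentiate to automorphisms. This is sound (and there is no circularity: although your argument is in substance the one the paper uses later in Propositions \ref{protangentgamma} and \ref{protang} to show $B^{1}(f)\subseteq Z^{1}(f)$, those propositions only use the explicit formula \eqref{eqclosed}, not $\delta\circ\delta=0$). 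The trade-off: your argument is conceptually cleaner, explaining \emph{why} degree-zero coboundaries are closed (they are tangent to the $\InnAut(\g\oplus\h)$-orbit, inside the tangent space to the variety of morphisms), but it imports exponentials and a smooth or formal parameter for what is a purely polynomial identity; the paper's computation is heavier but works verbatim over any field of any characteristic and in any dimension, with no analytic or formal apparatus. Your caveat about infinite dimensions is handled correctly by the formal-parameter remark, since the Maurer--Cartan equation here is polynomial in $t$ (only $l_1$ and $l_3$ are nonzero), so extracting the coefficient of $t$ is legitimate.
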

\begin{proof}
Since $\Big(\oplus_{j=0}^{+\infty}\Hom(\underbrace{\wedge^{2}\g\otimes\cdots\otimes\wedge^{2}\g}_j\wedge\g,\h), \{l_{k}^{f}\}_{k=1}^{+\infty}\Big)$ is an $L_{\infty}$-algebra, then for all $\theta\in C^{n}(f)$, $n\geq 1$, we have
$$
\delta\circ\delta(\theta)=-l_{1}^{f}\circ l_{1}^{f}(\theta)=0.
$$

For all $(x\wedge y, u\wedge v)\in (\g\wedge\g)\oplus(\h\wedge\h)=C^0(f)$, since $f:\g\lon\h$ is a $3$-Lie algebra morphism, it follows that
\begin{eqnarray*}
&&\delta(\delta(x\wedge y, u\wedge v))(x_1\wedge y_1, z)\\
&=&l_1(\delta(u\wedge v))(x_1\wedge y_1, z)+\frac{1}{2}l_3(f, f, \delta(u\wedge v))(x_1\wedge y_1, z)\\
&=&-\delta(x\wedge y, u\wedge v)(\pi(x_1, y_1, z))+\mu(f(x_1), f(y_1), \delta(x\wedge y, u\wedge v)(z))\\
&&+\mu(\delta(x\wedge y, u\wedge v)(x_1), f(y_1), f(z))+\mu(f(x_1), \delta(x\wedge y, u\wedge v)(y_1), f(z))\\
&=&-\mu\Big(u, v, f(\pi(x_1, y_1, z))\Big)+\mu(f(x_1), f(y_1), \mu(u, v, f(z)))+\mu(\mu(u, v, f(x_1)), f(y_1), f(z))\\
&&+\mu(f(x_1), \mu(u, v, f(y_1)), f(z))+f(\pi(x, y, \pi(x_1, y_1, z)))-\mu(f(x_1), f(x_2), f(\pi(x, y, z)))\\
&&-\mu(f(\pi(x, y, x_1)), f(y_1), f(z))-\mu(f(x_1), f(\pi(x, y, y_1)), f(z))=0.
\end{eqnarray*}
Thus $\delta\circ\delta=0$, for $n\geq 0$.
\end{proof}

\begin{defi}
Let $f:\g\lon\h$ be a $3$-Lie algebra morphism. The cohomology of the cochain complex $(\oplus_{n=0}^{+\infty}C^{n}(f), \delta)$ is called the cohomology of the morphism $f$. We denote the set of $n$-cocycles by $Z^{n}(f)$, the set of $n$-coboundaries by $B^{n}(f)$, and  the $n$-th cohomology group by $H^{n}(f)=Z^{n}(f)/B^{n}(f)$.
\end{defi}

\begin{rmk}
  In~\cite{ABM}, Arfa, Ben Fraj and Makhlouf introduced the cohomology   of $n$-Lie algebra morphisms using a different approach in their study of formal deformations of $n$-Lie algebra morphisms.
\end{rmk}

In the sequel, we give the explicit formula of the coboundary operator $\delta$ as follows. For $\theta\in C^{n}(f), n\geq 1$, $\mathfrak{X}_{i}=x_i\wedge y_i\in\g\wedge\g$ and $x\in\g$, there is
\begin{eqnarray}
\label{eqdefde01}&&\delta(\theta)\Big(\mathfrak{X}_{1},\cdots, \mathfrak{X}_{n}, x\Big)\\
\nonumber&=&(-1)^{n-1}(l_1(\theta)+\frac{1}{2}l_3(f, f, \theta))\Big(\mathfrak{X}_{1},\cdots, \mathfrak{X}_{n}, x\Big)\\
\nonumber&=&\sum_{1\leq i<j\leq n}(-1)^{i}\Big(\theta(\mathfrak{X}_{1},\cdots, \hat{\mathfrak{X}_{i}}, \cdots, \mathfrak{X}_{j-1}, \pi(\mathfrak{X}_{i}, x_j)\wedge y_j, \cdots, \mathfrak{X}_{n}, x)\\
\nonumber&&+\theta(\mathfrak{X}_{1},\cdots, \hat{\mathfrak{X}_{i}}, \cdots, \mathfrak{X}_{j-1}, x_j\wedge\pi(\mathfrak{X}_{i}, y_j), \cdots, \mathfrak{X}_{n}, x)\Big)\\\nonumber&&+\sum_{i=1}^{n}(-1)^{i}\theta\Big(\mathfrak{X}_{1},\cdots, \hat{\mathfrak{X}_{i}}, \cdots, \mathfrak{X}_{n}, \pi(\mathfrak{X}_{i}, x)\Big)\\
\nonumber&&+\sum_{i=1}^{n}(-1)^{i+1}\mu\Big(f(x_i), f(y_i), \theta(\mathfrak{X}_{1},\cdots, \hat{\mathfrak{X}_{i}}, \cdots, \mathfrak{X}_{n}, x)\Big)\\
\nonumber&&+(-1)^{n-1}\Big(\mu(\theta(\mathfrak{X}_{1},\cdots, \mathfrak{X}_{n-1}, x_{n}), f(y_{n}), f(x))+\mu(f(x_{n}), \theta(\mathfrak{X}_{1},\cdots, \mathfrak{X}_{n-1}, y_{n}), f(x))\Big).
\end{eqnarray}
In particular, for $\theta\in C^{1}(f)=\Hom(\g, \h)$, the formula of $\delta(\theta)$ will appear frequently in the following sections and we list it as following
\begin{eqnarray}
\label{eqclosed}&&\delta(\theta)(x\wedge y, z)\\
\nonumber&=&-\theta(\pi(x, y, z))+\mu(f(x), f(y), \theta(z))+\mu(\theta(x), f(y), f(z))+\mu(f(x), \theta(y), f(z)).
\end{eqnarray}

At the end of this section, we show that there is a cochain map between the cochain complex of $3$-Lie algebra $(\h, \mu)$ with the adjoint representation and the cochain complex of a $3$-Lie algebra morphism.

Denote by $$(\oplus_{n=1}^{+\infty}C^{n}(\h, \h), \partial)=\Big(\oplus_{n\geq 1}\Hom(\underbrace{\wedge^{2}\h\otimes\cdots\otimes\wedge^{2}\h}_{n-1}\wedge\h,\h),  \partial\Big)$$ the cochain complex of $3$-Lie algebra $(\h, \mu)$ with coefficients in the adjoint representation. For any $3$-Lie algebra morphism $f:\g\lon\h$, define a map
$$
f^*:\Hom(\underbrace{\wedge^{2}\h\otimes\cdots\otimes\wedge^{2}\h}_{n-1}\wedge\h,\h)\lon\Hom(\underbrace{\wedge^{2}\g\otimes\cdots\otimes\wedge^{2}\g}_{n-1}\wedge\g,\h)
$$ by
$$
f^*(\theta)(x_1\wedge y_1, \cdots, x_{n-1}\wedge y_{n-1}, x)=\theta(f(x_1)\wedge f(y_1), \cdots, f(x_{n-1})\wedge f(y_{n-1}), f(x)).
$$
Then we have the following proposition.
\begin{pro}\label{procomap}
With the above notations, $f^*$ is a cochain map, i.e. we have the following commutative diagram
\[
\small{ \xymatrix{
 0\ar[d]^{f^*} \ar[r]^{\partial}&
\Hom(\h, \h) \ar[d]^{f^*} \ar[r]^{\quad\partial}&\cdots\ar[r]& \Hom(\underbrace{\wedge^{2}\h\otimes\cdots\otimes\wedge^{2}\h}_{n-1}\wedge\h,\h) \ar[d]^{f^*} \ar[r]  & \cdots  \\
(\g\wedge\g)\oplus(\h\wedge\h)\ar[r]^{\delta} & \Hom(\g, \h) \ar[r]^{\quad \delta} &\cdots\ar[r]& \Hom(\underbrace{\wedge^{2}\g\otimes\cdots\otimes\wedge^{2}\g}_{n-1}\wedge\g,\h)\ar[r]& \cdots.}
}
\]
Consequently, $f^*$ induces a homomorphism of cohomology groups, which is also denoted by $f^*: H^{n}(\h, \h)\lon H^{n}(f)$, for $n\geq 0$.
\end{pro}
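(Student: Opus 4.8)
The plan is to verify the identity of the two composites $\delta\circ f^*=f^*\circ\partial$ one square at a time, reducing the whole computation to the defining property of a morphism together with the skew-symmetry of $\mu$. For the leftmost square the upper-left corner is $0$, so both composites $0\to\Hom(\g,\h)$ are the zero map and commutativity is automatic; likewise the square joining the first two columns starts from $0$ and commutes trivially. Thus the only content is to check, for each $n\geq 1$ and each $\theta\in C^{n}(\h,\h)$, that
$$
\delta(f^*\theta)(\mathfrak{X}_1,\cdots,\mathfrak{X}_n,x)=f^*(\partial\theta)(\mathfrak{X}_1,\cdots,\mathfrak{X}_n,x),\qquad \mathfrak{X}_i=x_i\wedge y_i\in\g\wedge\g,\ x\in\g.
$$
One first notes in passing that $f^*\theta$ is well defined, since replacing $x_k\wedge y_k$ by $f(x_k)\wedge f(y_k)$ respects the skew-symmetry in each $\wedge^2\g$-slot.

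First I would write out both sides explicitly. For the right-hand side I would specialize the formula for $\partial$ to the adjoint representation of $(\h,\mu)$, so that $\g$ is replaced by $\h$, $\pi$ by $\mu$, and $\rho$ by $\ad$ with $V=\h$, and then substitute $u_i:=f(x_i)$, $v_i:=f(y_i)$, $u_{n+1}:=f(x)$ according to the definition of $f^*$. For the left-hand side I would apply the explicit formula \eqref{eqdefde01} for $\delta$ to $f^*\theta$, using that evaluating $f^*\theta$ on a tuple of wedges from $\g$ simply inserts $f$ into every argument.

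The comparison then proceeds group by group. The double-sum terms indexed by $i<j$ and the terms of the form $\theta(\cdots,\pi(\mathfrak{X}_i,x))$ both feed an element $\pi(x_i,y_i,-)\in\g$ into $\theta$ through $f$; here the morphism identity $f(\pi(x_i,y_i,w))=\mu(f(x_i),f(y_i),f(w))$ converts each such $\pi$-argument into exactly the $\mu$-argument appearing in $f^*(\partial\theta)$, and bilinearity of $\wedge$ splits $\mu(f(x_i),f(y_i),f(x_j))\wedge f(y_j)+f(x_j)\wedge\mu(f(x_i),f(y_i),f(y_j))$ into the two summands produced by $\delta$. The terms $\mu(f(x_i),f(y_i),\theta(\cdots))$ coincide on the nose, once one recalls that the adjoint action is $\ad(f(x_i),f(y_i))=\mu(f(x_i),f(y_i),-)$ and that $f^*$ commutes with deleting a slot. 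The only place where skew-symmetry of $\mu$ is needed is the final pair of terms: in $\delta(f^*\theta)$ they read $\mu(\theta(\cdots,f(x_n)),f(y_n),f(x))$ and $\mu(f(x_n),\theta(\cdots,f(y_n)),f(x))$, while in $f^*(\partial\theta)$ they read $\mu(f(y_n),f(x),\theta(\cdots,f(x_n)))$ and $\mu(f(x),f(x_n),\theta(\cdots,f(y_n)))$, and a single cyclic—hence even—permutation of the three arguments of $\mu$ identifies the two pairs. Finally the overall signs agree because $(-1)^{n+1}=(-1)^{n-1}$.

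I do not expect a genuine obstacle: the statement is essentially bookkeeping, and the computation is forced once the morphism property is used to turn $\pi$ into $\mu$ inside $\theta$. The single point demanding care is keeping the ordering of the arguments of $\mu$ consistent between the two formulas, which is exactly where the cyclic-permutation step enters and where a sign slip could occur. Having established $\delta\circ f^*=f^*\circ\partial$, it is immediate that $f^*$ carries cocycles to cocycles and coboundaries to coboundaries, and hence induces the asserted map $f^*:H^{n}(\h,\h)\lon H^{n}(f)$ on cohomology for all $n\geq 0$.
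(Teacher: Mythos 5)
Your proposal is correct and follows essentially the same route as the paper: both proofs handle the degree-zero square trivially and then verify $\delta\circ f^*=f^*\circ\partial$ on $C^{n}(\h,\h)$ for $n\geq 1$ by expanding $\partial$ for the adjoint representation, inserting $f$ into every slot, and matching terms via the morphism identity $f(\pi(x,y,z))=\mu(f(x),f(y),f(z))$. Your explicit remarks on the cyclic permutation of the arguments of $\mu$ in the final pair of terms and on the sign $(-1)^{n+1}=(-1)^{n-1}$ are steps the paper leaves implicit, but they are the same computation.
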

\begin{proof}
For $n=0$, since $H^{0}(\h, \h)=0$, then $f^*:H^{0}(\h, \h)\lon H^{0}(f)$ is a group homomorphism. For any $\theta\in C^{n}(\h, \h), n\geq 1$, we have
\begin{eqnarray*}
&&f^*\Big(\partial(\theta)\Big)\Big(x_{1}\wedge y_1,\cdots, x_{n}\wedge y_n, x\Big)\\
&=&\partial(\theta)\Big(f(x_{1})\wedge f(y_1),\cdots, f(x_{n})\wedge f(y_n), f(x)\Big)\\
&=&\sum_{1\leq i<j\leq n}(-1)^{i}\theta\Big(f(x_1)\wedge f(y_1), \cdots, \widehat{f(x_i)\wedge f(y_i)}, \cdots, f(x_{j-1})\wedge f(y_{j-1}),\\
&& \mu(f(x_i), f(y_i), f(x_j))\wedge f(y_j)+f(x_j)\wedge\mu(f(x_i), f(y_i), f(y_j)), \cdots, f(x_{n})\wedge f(y_n), f(x)\Big)\\
&&+\sum_{i=1}^{n}(-1)^{i}\theta\Big(f(x_1)\wedge f(y_1),\cdots, \widehat{f(x_i)\wedge f(y_i)}, \cdots, f(x_{n})\wedge f(y_n), \mu(f(x_i), f(y_i), f(x))\Big)\\
&&+\sum_{i=1}^{n}(-1)^{i+1}\mu\Big(f(x_i), f(y_i), \theta(f(x_1)\wedge f(y_1),\cdots, \widehat{f(x_i)\wedge f(y_i)}, \cdots, f(x_n)\wedge f(y_n), f(x))\Big)\\
&&+(-1)^{n+1}\Big(\mu(f(y_n), f(x), \theta\Big(f(x_1)\wedge f(y_1),\cdots, f(x_{n-1})\wedge f(y_{n-1}), f(x_{n})\Big))\\
&&+\mu(f(x), f(x_n), \theta(f(x_1)\wedge f(y_1),\cdots, f(x_{n-1})\wedge f(y_{n-1}), f(y_{n})))\Big)\\
&=&\delta(f^*(\theta))\Big(x_{1}\wedge y_1,\cdots, x_{n}\wedge y_n, x\Big).
\end{eqnarray*}
Thus $f^*$ is a homomorphism from the cochain complex $(\oplus_{n=1}^{+\infty}C^{n}(\h, \h), \partial)$ to $(\oplus_{n=1}^{+\infty}C^{n}(f), \delta)$, which implies that $f^*: H^{n}(\h, \h)\lon H^{n}(f)$ is a group homomorphism for $n\geq 0$.
\end{proof}

\section{Rigidity and stability of $3$-Lie algebra morphisms}\label{sec:sec:rig-hom}
In this section, we study deformations of $3$-Lie algebra morphism. We show that the condition $H^1(f)=0$ will imply a rigidity theorem, and the condition $H^2(f)=0$ will imply a stability theorem.
\begin{defi}
Let $f$ be a $3$-Lie algebra morphism from $(\g, \pi)$ to $(\h, \mu)$. A deformation of $f$ is a smooth one parameter family of morphisms $f_t$ from $(\g, \pi)$ to $(\h, \mu)$ such that $f_0=f$.
\end{defi}

\begin{defi}
Two deformations $f_t$ and $f_t'$ of $f$ are called equivalent if there exists two smooth family of inner isomorphisms of $3$-Lie algebras $\varphi_t: \h\lon \h$ and $ \psi_t:\g\lon\g$ such that $\varphi_0=\Id, \psi_0=\Id$ and
  \begin{equation}\label{eq:equivalent}
  \varphi_t\circ f_{t}'=f_t\circ\psi_t.
  \end{equation}
 \end{defi}

Note that if  $\psi_t$ and $ \varphi_t$ are inner automorphisms of $3$-Lie algebras $\g$ and $\h$ respectively, then there are $\huaU=\sum_{i=1}^{k}u_i\wedge v_i\in\h\wedge\h$ and $\huaX=\sum_{j=1}^{l}x_j\wedge y_j\in\g\wedge\g$ such that
    $$
    \frac{d}{dt}|_{t=0}\varphi_t=\sum_{i=1}^{k}\mu(u_i, v_i, \cdot), \quad \frac{d}{dt}|_{t=0}\psi_t=\sum_{j=1}^{l}\pi(x_j, y_j, \cdot).
    $$

Let $f_t$ be a deformation of $f$. Denote $\frac{d}{dt}|_{t=0}f_t$ by $\dot{f_0}$.
\begin{pro}\label{protangentgamma}
With the above notations, $\dot{f_0}$ is a $1$-cocycle in $C^{1}(f)$. Moreover if $f_t$ and $f_t'$ are equivalent deformations of $f$, then $[\dot{f_0}]=[\dot{f_0'}]$ in $H^{1}(f)$.
\end{pro}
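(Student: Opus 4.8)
The plan is to prove both assertions by differentiating the defining relations at $t=0$ and matching the outcome term-by-term against the explicit coboundary formulas already recorded in this section. No deep input is needed beyond the Leibniz rule, the trilinearity of $\pi$ and $\mu$, and the formulas for $\delta$ on $C^0(f)$ and $C^1(f)$.

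For the cocycle claim, I would use that each $f_t$ is a morphism, so $f_t(\pi(x,y,z))=\mu(f_t(x),f_t(y),f_t(z))$ for all $t$ and all $x,y,z\in\g$. Differentiating this identity in $t$ at $t=0$, invoking the trilinearity of $\mu$ together with the Leibniz rule and $f_0=f$, I obtain
$$
\dot{f_0}(\pi(x,y,z))=\mu(\dot{f_0}(x),f(y),f(z))+\mu(f(x),\dot{f_0}(y),f(z))+\mu(f(x),f(y),\dot{f_0}(z)).
$$
Comparing this with the explicit expression \eqref{eqclosed} for $\delta$ on $C^1(f)$ shows that, for $\theta=\dot{f_0}$, the right-hand side of \eqref{eqclosed} is precisely the difference of the two sides above, hence vanishes. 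Therefore $\delta(\dot{f_0})=0$, i.e. $\dot{f_0}\in Z^1(f)$.

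For the equivalence claim, I would differentiate the relation \eqref{eq:equivalent}, namely $\varphi_t\circ f_t'=f_t\circ\psi_t$, at $t=0$. Writing $\dot\varphi_0=\frac{d}{dt}|_{t=0}\varphi_t$ and $\dot\psi_0=\frac{d}{dt}|_{t=0}\psi_t$ and using $\varphi_0=\Id$, $\psi_0=\Id$, $f_0=f_0'=f$, the Leibniz rule yields
$$
\dot\varphi_0\circ f+\dot{f_0'}=\dot{f_0}+f\circ\dot\psi_0,
$$
so that $\dot{f_0}-\dot{f_0'}=\dot\varphi_0\circ f-f\circ\dot\psi_0$. Since $\varphi_t$ and $\psi_t$ are families of inner automorphisms, by the observation recorded after \eqref{eq:equivalent} there exist $\huaU=\sum_{i=1}^{k}u_i\wedge v_i\in\h\wedge\h$ and $\huaX=\sum_{j=1}^{l}x_j\wedge y_j\in\g\wedge\g$ with $\dot\varphi_0=\sum_{i=1}^{k}\mu(u_i,v_i,\cdot)$ and $\dot\psi_0=\sum_{j=1}^{l}\pi(x_j,y_j,\cdot)$. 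Substituting these into the previous display gives
$$
\big(\dot{f_0}-\dot{f_0'}\big)(z)=\sum_{i=1}^{k}\mu(u_i,v_i,f(z))-\sum_{j=1}^{l}f(\pi(x_j,y_j,z)),
$$
which is exactly $\delta(\huaX,\huaU)(z)$ by the definition of $\delta$ on $C^0(f)$. Hence $\dot{f_0}-\dot{f_0'}=\delta(\huaX,\huaU)\in B^1(f)$, and so $[\dot{f_0}]=[\dot{f_0'}]$ in $H^1(f)$.

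The only delicate point is the bookkeeping in the Leibniz rule when differentiating the composites $\varphi_t\circ f_t'$ and $f_t\circ\psi_t$: one must distinguish differentiating the parameter family $\varphi_t$ (which produces the linear map $\dot\varphi_0$) from applying $\varphi_t$ as a linear operator (whose derivative at $t=0$ contributes the identity, since $\varphi_0=\Id$), and similarly for $\psi_t$ and $f_t$. Once this is handled correctly, the remaining work is the routine matching of the resulting terms against the coboundary formulas.
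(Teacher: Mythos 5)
Your proposal is correct and follows essentially the same route as the paper's own proof: differentiating the morphism identity $f_t(\pi(x,y,z))=\mu(f_t(x),f_t(y),f_t(z))$ at $t=0$ and matching against \eqref{eqclosed} for the cocycle claim, then differentiating \eqref{eq:equivalent} and using the inner-automorphism generators $\huaU$, $\huaX$ to identify $\dot{f_0}-\dot{f_0'}$ with $\delta(\huaX,\huaU)$. The bookkeeping you flag about the Leibniz rule is handled identically (and implicitly) in the paper, so there is no substantive difference.
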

\begin{proof}
Since $f_t$ is a $3$-Lie algebra morphism, for any $x, y, z\in\g$, we have
\begin{eqnarray}
\label{eqR'}&&\mu(\dot{f_0}(x), f(y), f(z))+\mu(f(x), \dot{f_0}(y), f(z))+\mu(f(x), f(y), \dot{f_0}(x))\\
\nonumber&=&\frac{d}{dt}|_{t=0}\mu(f_t(x), f_t(y), f_t(z))=\frac{d}{dt}|_{t=0}f_t(\pi(x, y, z))\\
\nonumber&=&\dot{f_0}(\pi(x, y, z)).
\end{eqnarray}
Thus by \eqref{eqclosed} and \eqref{eqR'}, we have $\delta(\dot{f_0})=0$.

Assume that $\varphi_t:\h\lon\h$ and $\psi_t:\g\lon\g$ are inner isomorphisms such that $f_t$ and $f'_t$ are equivalent.
Denote $\frac{d}{dt}|_{t=0}\varphi_t$ by $\dot{\varphi_0}$ and $\frac{d}{dt}|_{t=0}\psi_t$ by $\dot{\psi_0}$. Taking derivative of \eqref{eq:equivalent}, and using the fact that $\varphi_0=\Id $ and  $\psi_0=\Id$,  we have
$$
\dot{\varphi_0}(f(x))+\dot{f_0'}(x)=\dot{f_0}(x)+f(\dot{\psi_0}(x)).
$$
Since $\varphi_t$ and $\psi_t$ are inner automorphisms of $3$-Lie algebras $\h$ and $\g$ respectively, it follows that there exist $\huaU=\sum_{i=1}^{k}u_i\wedge v_i\in\h\wedge\h$ and $\huaX=\sum_{j=1}^{l}y_j\wedge z_j\in\g\wedge\g$ such that $$\dot{\varphi_0}=\sum_{i=1}^{k}\mu(u_i, v_i, \cdot), \quad \dot{\psi_0}=\sum_{j=1}^{l}\pi(y_j, z_j, \cdot).$$ Therefore, we have
\begin{equation*}
\sum_{i=1}^{k}\mu(u_i, v_i, f(x))+\dot{f'_0}(x)=\dot{f_0}(x)+f(\sum_{j=1}^{l}\pi(y_j, z_j, x)),
\end{equation*}
which implies $\dot{f_0}-\dot{f'_0}=\delta(\huaX, \huaU)$. Thus $[\dot{f_0}]=[\dot{f_0'}]$ in $H^{1}(f)$.
\end{proof}

Next, we consider under what conditions does a cocycle $\alpha\in Z^{1}(f)$ determine a deformation $f_t$. To solve this problem, we need to introduce the following geometric tools.
\emptycomment{
Define the Kuranishi map $K: Z^{1}(f)\lon H^{2}(f)$ by
\begin{equation*}
K(\alpha)=[l^{f}_2(\alpha, \alpha)], \quad \forall \alpha\in Z^{1}(f).
\end{equation*}
More precisely,
\begin{eqnarray}
\label{defiCour}
l_2^{f}(\alpha, \alpha)(x\wedge y, z)&=&l_3(f, \alpha, \alpha)(x\wedge y, z)\\
\nonumber&=&[[[\mu, f]_{\Ri}, \alpha]_{\Ri}, \alpha]_{\Ri}(x\wedge y, z)\\
\nonumber&=&2\Big(\mu(\alpha(x), \alpha(y), f(z))+\mu(\alpha(x), f(y), \alpha(z))+\mu(f(x), \alpha(y), \alpha(z))\Big).
\end{eqnarray}

\begin{pro}\label{pro:K}
Let $(\g, \pi)$ and $(\h, \mu)$ be $3$-Lie algebras, $f:\g\lon\h$ be a $3$-Lie algebra morphism. Assume that there exists a deformation $f_t$ of $f$ such that $\dot{f_0}=\alpha\in Z^{1}(f)$, then $K(\alpha)=0$.
\end{pro}
\begin{proof}
Consider the Taylor expansion of $f_t$ around $t=0$, then we have
\begin{equation*}
f_t(x)=f(x)+t\alpha(x)+\frac{t^{2}}{2}g(x)+o(t^{3}).
\end{equation*}
Since $\mu(f_t(x), f_t(y), f_t(z))=f_t(\pi(x, y, z))$ and $\alpha\in Z^{1}(f)$, we have
\begin{equation}\label{Taylor}
\frac{t^{2}}{2}\Big(\delta(g)(x, y)+2\mu(\alpha(x), \alpha(y), f(z))+2\mu(\alpha(x), f(y), \alpha(z))+2\mu(f(x), \alpha(y), \alpha(z))\Big)+o(t^3)=0.
\end{equation}
Thus by \eqref{defiCour} and \eqref{Taylor}, we obtain $l_{2}^{f}(\alpha, \alpha)=-\delta g$, which implies that $K(\alpha)=0$.
\end{proof}

\yh{Is it true if $K(\alpha)=0$ then $\alpha\in Z^{1}(f)$ determine a deformation $f_t$?}
}

Let $E\stackrel{\pi}{\lon} M$ be a vector bundle. Assume that there is a smooth action $\cdot: G\times E\lon E$ of a Lie group $G$ on $E$ preserving the zero-section $Z: M\lon E$. It follows that $M$ inherits a $G$-action. We also denote the action of $G$ on $M$ by $\cdot: G\times M\lon M$. For all $x\in M$, define a smooth map $\zeta_x: G\lon M$ by $\zeta_x(g)=g\cdot x$. Denote the tangent map of $\zeta_x$ at the unit $e_G$ of $G$ by $D(\zeta_x)_{e_G}$, which is a linear map from $\g$ to $T_xM$.
\begin{defi}{\rm(\cite{CSS})}
A section $s: M\lon E$ is called {\bf equivariant} if $s$ satisfies
\begin{equation*}
s(g\cdot x)=g\cdot s(x), \quad \forall g\in G, x\in M.
\end{equation*}
\end{defi}

Denote the zero set of a section $s: M\lon E$ by $z(s)=\{x\in M| s(x)=0\}$. A zero point $x\in M$ of $s$ is called non-degenerate if the sequence
\begin{equation*}
\g\stackrel{D(\zeta_x)_{e_G}}{\longrightarrow}T_xM\stackrel{D^{v}(s)_x}{\longrightarrow} E_x
\end{equation*}
is exact, where $D^{v}(s)_x$ is the vertical derivative of $s$ at $x$.

\begin{pro}\rm(\cite{CSS})\label{proorbR}
Let $s$ be an equivariant section of the vector bundle $E\stackrel{\pi}{\lon} M$ and $x$ be a non-degenerate zero of $s$. Then there is an open neighborhood $U$ of $x$ and a smooth map $p: U\lon G$ such that for all $m\in U$ with $s(m)=0$, one has $p(m)\cdot x=m$. In particular, the orbit of $x$ under the action of $G$ and the zero set of $s$ coincide in an open neighborhood of $x$.
\end{pro}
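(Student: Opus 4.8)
The plan is to deduce the statement from a transverse slice construction combined with the constant rank theorem, the non-degeneracy hypothesis—exactness of $\g\xrightarrow{D(\zeta_x)_{e_G}}T_xM\xrightarrow{D^{v}(s)_x}E_x$—being used in both of its inclusions. First I would record the trivial inclusion: since the $G$-action covers the action on $M$, maps each fibre $E_m$ bijectively to $E_{g\cdot m}$, and preserves the zero-section, equivariance gives $s(g\cdot x)=g\cdot s(x)=g\cdot 0=0$ for every $g\in G$. Hence the whole orbit $G\cdot x$ lies inside the zero set $z(s)$, and all the work is in the reverse inclusion near $x$ and in producing the smooth map $p$.

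Next I would choose a submanifold $S\subset M$ through $x$ whose tangent space $T_xS$ is a linear complement of $\Img(D(\zeta_x)_{e_G})=T_x(G\cdot x)$ in $T_xM$ (for instance a small patch of an affine subspace in a chart). Consider the action map $\Phi\colon G\times S\lon M$, $\Phi(g,s')=g\cdot s'$. Identifying $T_{(e_G,x)}(G\times S)\cong\g\oplus T_xS$, its differential sends $(\xi,w)$ to $D(\zeta_x)_{e_G}(\xi)+w$, whose image is $\Img(D(\zeta_x)_{e_G})+T_xS=T_xM$; thus $\Phi$ is a submersion at $(e_G,x)$. A smooth local section of $\Phi$ then provides an open neighbourhood $U$ of $x$ and smooth maps $g\colon U\lon G$ and $\sigma\colon U\lon S$ with $g(x)=e_G$, $\sigma(x)=x$ and $g(m)\cdot\sigma(m)=m$ for all $m\in U$. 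I would set $p:=g$.

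The key remaining claim is that $x$ is the only zero of $s$ on $S$ after shrinking $U$. Indeed, exactness gives $\Ker(D^{v}(s)_x)=\Img(D(\zeta_x)_{e_G})$, which meets $T_xS$ only in $0$, so $D^{v}(s)_x|_{T_xS}$ is injective. Trivialising $E$ near $x$ and regarding $s|_S$ as a map into $V=E_x$ with injective derivative at $x$, the immersion (constant rank) theorem makes $s|_S$ locally injective, hence $x$ is its only zero on $S$ near $x$. Finally, for a zero $m\in U$ I would invoke equivariance a second time: $0=s(m)=s\big(g(m)\cdot\sigma(m)\big)=g(m)\cdot s(\sigma(m))$, and since $g(m)$ maps $E_{\sigma(m)}$ bijectively to $E_m$ preserving $0$, this forces $s(\sigma(m))=0$; the previous claim then yields $\sigma(m)=x$, whence $m=g(m)\cdot x=p(m)\cdot x\in G\cdot x$. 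This is the reverse inclusion, so $z(s)$ coincides with $G\cdot x$ near $x$.

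The main obstacle I anticipate is the interaction between the slice and the vertical derivative: one must arrange the local trivialisation so that $D^{v}(s)_x$ is intrinsically defined (which holds precisely because $s(x)=0$) and then pass from injectivity of its restriction to $T_xS$ to an honest isolated zero of $s|_S$, not merely of its linear part. The two uses of equivariance—first to embed the orbit in $z(s)$, then to transport an arbitrary nearby zero onto the slice—are the conceptual heart of the argument; once these are in place, the submersion and constant rank theorems make the remaining steps routine.
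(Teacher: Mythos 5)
Your proof is correct. The paper itself gives no argument for this proposition --- it is imported verbatim from \cite{CSS} --- and your slice construction is essentially the standard proof from that source: embed the orbit in $z(s)$ by equivariance, build a local section of the action map $\Phi\colon G\times S\to M$ via the submersion theorem, use $\Ker(D^{v}(s)_x)=\Img(D(\zeta_x)_{e_G})$ to make $x$ an isolated zero of $s|_S$, and transport an arbitrary nearby zero onto the slice by a second application of equivariance. All the delicate points are handled properly: the vertical derivative is intrinsic because $s(x)=0$, local injectivity of $s|_S$ (not merely of its linearization) comes from the immersion theorem, and the shrinking of $U$ so that $\sigma(m)$ lands in the neighborhood where $x$ is the unique zero on $S$ is noted. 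One small quibble: only the inclusion $\Ker(D^{v}(s)_x)\subseteq\Img(D(\zeta_x)_{e_G})$ of exactness is actually used; the reverse inclusion is automatic from equivariance (differentiate $s(g\cdot x)=0$ along the orbit), so your opening remark that both inclusions are invoked slightly overstates the bookkeeping, without affecting correctness.
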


\begin{pro}\rm(\cite{CSS})\label{proorbR1}
Let $E$ and $F$ be vector bundles over a smooth manifold $M$. Let $s\in \Gamma(E)$ be a section and $\phi\in\Gamma(\Hom(E, F))$ be a vector bundle map such that $\phi\circ s=0$. Suppose that $x\in M$ is $s(x)=0$ such that
\begin{equation*}
T_xM\stackrel{D^{v}(s)_x}{\longrightarrow} E_x\stackrel{\phi_x}{\longrightarrow} F_x
\end{equation*}
is exact. Then there are the following conclusions
\begin{itemize}
\item [\rm(i)]
$s^{-1}(0)$ is locally a manifold around $x$ of dimension $\mathrm{dim}\ker(D^{v}(s)_x)$.
\item [\rm(ii)]
If $s'$ is another section of $\Gamma(E)$ which is $\huaC^0$-close to $s$, and $\phi'$ is another vector bundle map $E\lon F$ which is $\huaC^{0}$-close to $\phi$ and such that $\phi'\circ s'=0$, then there exists $x'\in M$ close to $x$ such that $s'(x')=0$.
\end{itemize}
\end{pro}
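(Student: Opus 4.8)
The plan is to pass to local trivialisations and reduce both assertions to the behaviour of the component of $s$ lying in the subbundle $\Ker(\phi)$. Fix coordinates on a neighbourhood $U$ of $x$ and trivialisations of $E,F$ over $U$, so that $s$ becomes a smooth map $s:U\lon V$ with $V=E_x$, the bundle map becomes a smooth family $m\mapsto\phi_m\in\Hom(V,W)$ with $W=F_x$, and $D^{v}(s)_x$ is identified with the ordinary derivative $Ds_x:T_xM\lon V$; write $n=\dim M$. Differentiating $\phi\circ s=0$ at $x$ and using $s(x)=0$ gives $\phi_x\circ Ds_x=0$, so $\Img(Ds_x)\subset\Ker(\phi_x)$, and the assumed exactness upgrades this to $\Img(Ds_x)=\Ker(\phi_x)$. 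Put $r=\dim\Ker(\phi_x)$, choose a splitting $V=\Ker(\phi_x)\oplus V'$, and write $s=(s_1,s_2)$ accordingly. Since $\Ker(\phi_x)\cap V'=0$, the restriction $\phi_x|_{V'}$ is injective; injectivity is an open condition, so after shrinking $U$ the map $\phi_m|_{V'}$ is injective for every $m\in U$.

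First I would establish the local identity $s^{-1}(0)=s_1^{-1}(0)$ on $U$. The inclusion $\subset$ is immediate, and for $\supset$ note that $s_1(m)=0$ forces $s(m)\in V'$, whence $0=\phi_m(s(m))=\phi_m|_{V'}(s_2(m))$ and therefore $s_2(m)=0$ by injectivity. Next, $D(s_1)_x=\pr\circ Ds_x$, with $\pr:V\lon\Ker(\phi_x)$ the projection, is surjective because $\Img(Ds_x)=\Ker(\phi_x)$; hence $s_1:U\lon\Ker(\phi_x)\cong\R^{r}$ is a submersion at $x$. Consequently $s^{-1}(0)=s_1^{-1}(0)$ is, near $x$, a submanifold of dimension $n-r=\dim\Ker(Ds_x)=\dim\Ker(D^{v}(s)_x)$, which is exactly claim (i).

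For claim (ii) the injectivity argument applies verbatim to the pair $(s',\phi')$: as $\phi'$ is $\huaC^{0}$-close to $\phi$, each $\phi'_m|_{V'}$ is still injective on $U$, so $(s')^{-1}(0)=(s_1')^{-1}(0)$ near $x$, and it suffices to find a zero of $s_1':U\lon\R^{r}$ close to $x$. The main obstacle is that $s'$ is only $\huaC^{0}$-close to $s$, so neither the implicit function theorem nor any control on $Ds_1'$ is available; the remedy is a degree argument. Since $D(s_1)_x$ is surjective I would choose a small embedded $r$-disk $D\subset U$ through $x$ whose tangent space at $x$ is a complement to $\Ker(D(s_1)_x)$, so that $s_1|_D$ is a diffeomorphism onto a ball about $0\in\R^{r}$ and in particular has nonzero Brouwer degree at $0$. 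On $\partial D$ the straight-line homotopy from $s_1|_{\partial D}$ to $s_1'|_{\partial D}$ misses the origin once the $\huaC^{0}$-distance is smaller than $\min_{\partial D}\lvert s_1\rvert$, so $s_1'|_D$ has the same nonzero degree at $0$ and therefore vanishes at some $x'\in D$. By the reduction $(s')^{-1}(0)=(s_1')^{-1}(0)$, this gives $s'(x')=0$ with $x'$ close to $x$, proving (ii).
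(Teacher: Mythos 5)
The paper itself contains no proof of this proposition---it is quoted verbatim from \cite{CSS}---and your argument is correct and essentially reproduces the proof from that reference: split $E_x=\Ker(\phi_x)\oplus V'$, use openness of injectivity to reduce the zero sets of $s$ and $s'$ to those of their $\Ker(\phi_x)$-components, get (i) from the submersion/regular value theorem, and get (ii) from a Brouwer degree argument, which is precisely the right tool since only $C^0$-closeness of $s'$ is assumed and no control on $Ds'$ is available. The one point to make explicit is that the uniform injectivity of $\phi'_m|_{V'}$ and the bound $\|s_1'-s_1\|_{C^0}<\min_{\partial D}\lvert s_1\rvert$ must be arranged on a fixed compact neighbourhood of $x$, which your shrinking of $U$ and choice of a small disk $D$ already accommodate.
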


Let $(\g\oplus\h, \pi+\mu)$ be the direct product $3$-Lie algebra of $(\g, \pi)$ and $(\h, \mu)$. Denote by
$$\InnDer(\g\oplus
 \h)=\{(\ad_{\huaX}, \ad_{\huaU})|~~\forall~~\huaX\in\wedge^2\g, \huaU\in\wedge^{2}\h\}.$$
It is obvious that $\InnDer(\g\oplus \h)$ is a Lie algebra.

Denote the connected, simply connected Lie group of $\InnDer(\g), \InnDer(\h)$ and $\InnDer(\g\oplus\h)$ by $\InnAut(\g), \InnAut(\h)$ and $\InnAut(\g\oplus\h)$ respectively, then $\InnAut(\g\oplus\h)=(\InnAut(\g), \InnAut(\h))$. Define an action of $\InnAut(\g\oplus\h)$ on $\Hom(\g, \h)$ by
\begin{equation*}
\cdot: \InnAut(\g\oplus\h)\times\Hom(\g, \h)\lon\Hom(\g, \h), \quad (A, B)\cdot g=BgA^{-1},
\end{equation*}for all $(A, B)\in\InnAut(\g\oplus\h), g\in\Hom(\g, \h),$ where $A\in \InnAut(\g)$ and $B\in \InnAut(\h)$.
Assume that $f:\g\lon\h$ is a $3$-Lie algebra morphism, then the orbit $\Orb_f=\{(A, B)\cdot f| (A, B)\in\InnAut(\g\oplus\h) \}$ of $f$ is a manifold. Define a map $\zeta_f: \InnAut(\g\oplus\h)\lon \Hom(\g, \h)$ by $\zeta_f(A, B)=(A, B)\cdot f$. Then $T_f\Orb_f$ is $D(\zeta_f)_e(\InnDer(\g\oplus\h))$, where $D(\zeta_f)_e$ is the tangent map of $\zeta_f$ at the unit $e$ of the Lie group $\InnAut(\g\oplus\h)$.

\begin{pro}\label{protang}
With the above notations, $T_f\Orb_f$ is $B^{1}(f)$.
\end{pro}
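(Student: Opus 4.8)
The plan is to compute the tangent space $T_f\Orb_f = D(\zeta_f)_e(\InnDer(\g\oplus\h))$ explicitly and identify it with $B^1(f)$. Recall that $B^1(f) = \delta(C^0(f))$, where $C^0(f) = (\g\wedge\g)\oplus(\h\wedge\h)$ and, by the defining formula for $\delta$ on degree zero,
\begin{equation*}
\delta(\huaX, \huaU)(z) = \sum_{i=1}^{k}\mu(u_i, v_i, f(z)) - \sum_{j=1}^{l}f(\pi(x_j, y_j, z)),
\end{equation*}
for $\huaU = \sum_i u_i\wedge v_i\in\wedge^2\h$ and $\huaX = \sum_j x_j\wedge y_j\in\wedge^2\g$. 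So my goal reduces to showing that $D(\zeta_f)_e$ applied to an element of $\InnDer(\g\oplus\h)$ produces exactly this expression.

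First I would unwind the action. An element of $\InnDer(\g\oplus\h)$ has the form $(\ad_{\huaX}, \ad_{\huaU})$ with $\huaX\in\wedge^2\g$, $\huaU\in\wedge^2\h$. Under the exponential map into $\InnAut(\g\oplus\h)$, I pick a curve $(A_t, B_t) = (\exp(t\,\ad_{\huaX}), \exp(t\,\ad_{\huaU}))$ with $(A_0, B_0) = e$, so that $\frac{d}{dt}|_{t=0}A_t = \ad_{\huaX} = \sum_j \pi(x_j, y_j, \cdot)$ and $\frac{d}{dt}|_{t=0}B_t = \ad_{\huaU} = \sum_i \mu(u_i, v_i, \cdot)$. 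The action gives $\zeta_f(A_t, B_t) = B_t\circ f\circ A_t^{-1}$. Differentiating at $t=0$ via the product rule, and using $A_0 = \Id$, $B_0 = \Id$ together with $\frac{d}{dt}|_{t=0}A_t^{-1} = -\frac{d}{dt}|_{t=0}A_t$, I obtain
\begin{equation*}
D(\zeta_f)_e(\ad_{\huaX}, \ad_{\huaU})(z) = \sum_{i=1}^{k}\mu(u_i, v_i, f(z)) - \sum_{j=1}^{l}f(\pi(x_j, y_j, z)),
\end{equation*}
which is precisely $\delta(\huaX, \huaU)(z)$. Hence $D(\zeta_f)_e(\ad_{\huaX}, \ad_{\huaU}) = \delta(\huaX, \huaU)$.

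From this identity the conclusion follows by taking images: as $(\huaX, \huaU)$ ranges over $\wedge^2\g\oplus\wedge^2\h = C^0(f)$, the pair $(\ad_{\huaX}, \ad_{\huaU})$ ranges over all of $\InnDer(\g\oplus\h)$, so
\begin{equation*}
T_f\Orb_f = D(\zeta_f)_e(\InnDer(\g\oplus\h)) = \{\delta(\huaX, \huaU)\mid \huaX\in\wedge^2\g,\ \huaU\in\wedge^2\h\} = \delta(C^0(f)) = B^1(f),
\end{equation*}
as desired. The main obstacle I anticipate is purely bookkeeping at the differentiation step: I must be careful that the derivative of the conjugation-type action $B_t\circ f\circ A_t^{-1}$ produces the correct signs — in particular that the $A_t^{-1}$ factor contributes the minus sign in front of the $f\circ\pi$ term — and that the map $(\huaX,\huaU)\mapsto(\ad_{\huaX},\ad_{\huaU})$ indeed surjects onto $\InnDer(\g\oplus\h)$ by its very definition, so that no part of the tangent space is missed. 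Both points are routine once the curve and the action are set up as above.
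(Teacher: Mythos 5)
Your proposal is correct and follows essentially the same route as the paper: both differentiate the curve $\exp(t\,\ad_{\huaU})\circ f\circ\exp(-t\,\ad_{\huaX})$ at $t=0$ to get $\sum_i\mu(u_i,v_i,f(\cdot))-\sum_j f(\pi(x_j,y_j,\cdot))=\delta(\huaX,\huaU)$, and then identify the image of $D(\zeta_f)_e$ with $\delta(C^0(f))=B^1(f)$. Your extra remarks on the sign from $A_t^{-1}$ and the surjectivity of $(\huaX,\huaU)\mapsto(\ad_{\huaX},\ad_{\huaU})$ just make explicit what the paper's one-line computation leaves implicit.
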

\begin{proof}
Since $T_f\Orb_f=D(\zeta_f)_e(\InnDer(\g\oplus\h))$, for any $q\in T_f\Orb_f$, there exists $\huaU=\sum_{i=1}^{k}u_i\wedge v_i\in\h\wedge\h$ and $\huaX=\sum_{j=1}^{l}x_j\wedge y_j$ such that
\begin{eqnarray*}
q&=&\frac{d}{dt}|_{t=0}\exp(t\ad_{\huaU})f\exp(-t\ad_{\huaX})=\sum_{i=1}^{k}\ad_{u_i\wedge v_i}f-\sum_{j=1}^{l}f\ad_{x_j\wedge y_j}=\delta(\huaX,\huaU).
\end{eqnarray*}
Thus we have $T_f\Orb_f=B^{1}(f)$.
\end{proof}

\begin{thm}\label{thmrigidity}{\bf (Rigidity of $3$-Lie algebra morphisms)}
Let $f:\g\lon\h$ be a $3$-Lie algebra morphism. If $H^{1}(f)=0$, then there exists an open neighborhood $U\subset \Hom(\g, \h)$ of $f$ and a smooth map $p: U\lon \InnAut(\h)$ such that $p(f')\cdot f=f'$ for every $3$-Lie algebra morphism $f'\in U$.
\end{thm}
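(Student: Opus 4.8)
The plan is to exhibit the set of $3$-Lie algebra morphisms near $f$ as the zero set of an equivariant section and then apply Proposition \ref{proorbR}. First I would take $M=\Hom(\g,\h)$ as a finite-dimensional smooth manifold and form the trivial vector bundle $E=M\times C^2(f)\lon M$. Define a section $s\colon M\lon E$ by $s(g)=(g,\bar\Phi(g))$, where $\bar\Phi(g):=l_1(g)+\frac16 l_3(g,g,g)\in C^2(f)$; by the computation in the proof of Theorem \ref{Maurer-Cartanm} one has $\bar\Phi(g)(x\wedge y,z)=-g(\pi(x,y,z))+\mu(g(x),g(y),g(z))$, so $\bar\Phi$ is polynomial, $s$ is smooth, and by Theorem \ref{Maurer-Cartanm} the zero set $z(s)$ is precisely the set of $3$-Lie algebra morphisms $\g\lon\h$.

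Next I would let $G=\InnAut(\h)$ act on $M$ by $B\cdot g=B\circ g$ and on $E$ by $B\cdot(g,\Psi)=(B\circ g,B\circ\Psi)$, an action preserving the zero section. Since each $B\in\InnAut(\h)$ is an automorphism of $(\h,\mu)$, a direct check gives $\bar\Phi(B\circ g)=B\circ\bar\Phi(g)$, so $s$ is $G$-equivariant. For the infinitesimal orbit, $\zeta_f(B)=B\circ f$ gives $\Img\big(D(\zeta_f)_e\big)=\{\ad_\huaU\circ f\mid\huaU\in\wedge^2\h\}$. I claim this equals $B^1(f)$: because $f$ is a morphism, $f\circ\ad_\huaX=\ad_{(f\wedge f)(\huaX)}\circ f$ with $(f\wedge f)(\huaX)=\sum_j f(x_j)\wedge f(y_j)$, so each coboundary $\delta(\huaX,\huaU)=\ad_\huaU\circ f-f\circ\ad_\huaX$ takes the form $\ad_\huaW\circ f$ with $\huaW=\huaU-(f\wedge f)(\huaX)\in\wedge^2\h$, and conversely every $\ad_\huaW\circ f$ is such a coboundary. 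Thus $T_f\Orb_f=B^1(f)$ even for the single group $\InnAut(\h)$, in agreement with Proposition \ref{protang}.

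Then I would identify the vertical derivative $D^v(s)_f=D\bar\Phi_f$. Differentiating at $f$ in a direction $\theta\in\Hom(\g,\h)$, the term $l_1$ contributes $l_1(\theta)$ and the cubic term contributes $\frac16\cdot 3\,l_3(f,f,\theta)=\frac12 l_3(f,f,\theta)$ by graded symmetry, so $D^v(s)_f(\theta)=l_1(\theta)+\frac12 l_3(f,f,\theta)=l_1^f(\theta)=\delta(\theta)$, whose kernel is $Z^1(f)$. Hence the sequence
\[
\InnDer(\h)\stackrel{D(\zeta_f)_e}{\longrightarrow}T_fM\stackrel{D^v(s)_f}{\longrightarrow}E_f
\]
is exact at $T_fM$ precisely when $B^1(f)=Z^1(f)$, i.e. precisely when $H^1(f)=0$. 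Under the hypothesis $H^1(f)=0$, $f$ is therefore a non-degenerate zero of the equivariant section $s$.

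Finally, Proposition \ref{proorbR} applied at $x=f$ produces an open neighborhood $U\subset\Hom(\g,\h)$ of $f$ together with a smooth map $p\colon U\lon\InnAut(\h)$ such that $p(f')\cdot f=f'$ for every $f'\in U$ with $s(f')=0$, i.e. for every $3$-Lie algebra morphism $f'\in U$, which is the claim. The hard part will be the non-degeneracy step: I must correctly linearize the cubic Maurer--Cartan map to recognize $D^v(s)_f=\delta$, and I must verify the absorption identity $f\circ\ad_\huaX=\ad_{(f\wedge f)(\huaX)}\circ f$, which is what allows the single group $\InnAut(\h)$ to sweep out all of $B^1(f)$, so that non-degeneracy collapses exactly to the cohomological condition $H^1(f)=0$.
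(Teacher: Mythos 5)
Your proposal is correct, and it follows the paper's overall strategy --- realize the set of morphisms as the zero set of an equivariant section of the trivial bundle over $M=\Hom(\g,\h)$, identify $D^{v}(s)_f=\delta$ by linearizing the cubic Maurer--Cartan map (your computation $l_1(\theta)+\frac12 l_3(f,f,\theta)=l_1^f(\theta)=\delta(\theta)$ agrees with \eqref{defiDS}, and your use of the fiber $C^2(f)$ in place of $\Hom(\wedge^3\g,\h)$ is harmless since $\bar\Phi(g)$ is totally antisymmetric), and then apply Proposition \ref{proorbR} --- but with one genuine difference: the symmetry group. The paper acts by the product group $\InnAut(\g\oplus\h)$ via $(A,B)\cdot g=BgA^{-1}$ and invokes Proposition \ref{protang}, which computes the tangent space of the two-sided orbit to be $B^1(f)$; consequently its application of Proposition \ref{proorbR} produces $p:U\lon\InnAut(\g\oplus\h)$, whereas the theorem as stated asks for $p:U\lon\InnAut(\h)$. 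You work with the one-sided action of $\InnAut(\h)$ alone and close this gap with the absorption identity $f\circ\ad_{\huaX}=\ad_{(f\wedge f)(\huaX)}\circ f$, which holds precisely because $f$ is a morphism and shows that every coboundary $\delta(\huaX,\huaU)=\ad_{\huaU}\circ f-f\circ\ad_{\huaX}$ is of the one-sided form $\ad_{\huaW}\circ f$ with $\huaW=\huaU-(f\wedge f)(\huaX)$, so that $\Img\big(D(\zeta_f)_{e}\big)=B^1(f)$ already for the smaller group and non-degeneracy collapses to $H^1(f)=0$ exactly as in the paper. Your route therefore proves the statement in its literal form (with $p$ landing in $\InnAut(\h)$) and slightly repairs the mismatch between the group appearing in the paper's statement and in its proof; what the paper's two-sided action buys in exchange is a symmetric treatment of source and target, which is the natural setting elsewhere in the paper, e.g.\ for the notion of equivalence of deformations used in Proposition \ref{protangentgamma}.
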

\begin{proof}
Denote by $M=\Hom(\g, \h)$ and $E=\Hom(\g, \h)\times\Hom(\wedge^{3}\g, \h)$. Then $E$ is a trivial vector bundle over $M$ with fiber $\Hom(\wedge^{3}\g, \h)$. Define an action of $\InnAut(\g\oplus\h)$ on the manifold $E$ by
\begin{equation*}
\cdot: \InnAut(\g\oplus\h)\times E\lon E,
\end{equation*}
where
\begin{equation*}
(A, B)\cdot(g, \alpha)(x, y\wedge z\wedge w)=(B(g(A^{-1}(x))), B(\alpha(A^{-1}(y), A^{-1}(z), A^{-1}(w)))),
\end{equation*}
for $(g, \alpha)\in E,~(A, B)\in\InnAut(\g\oplus\h).$ Define a section $s: M\lon E$ by
\begin{equation*}
s(g)=(g, S(g)), \quad \forall g\in M,
\end{equation*}
where $S: \Hom(\g, \h)\lon\Hom(\wedge^{3}\g, \h)$ is given by
\begin{equation*}
S(g)(x, y, z)=\mu(g(x), g(y), g(z))-g(\pi(x, y, z)),
\end{equation*}for all $g\in\Hom(\g, \h), ~x, y, z\in\g.$
Then for any $(A,B)\in \InnAut(\g\oplus\h), g\in M$ and $x, y, z\in\g$, we have
\begin{eqnarray*}
(A,B)S(g)(x, y, z)&=&B(\mu(g(A^{-1}(x)), g(A^{-1}(y)), g(A^{-1}(z)))-g(\pi(A^{-1}(x), A^{-1}(y), A^{-1}(z))))\\
&=&\mu(Bg(A^{-1}(x)), Bg(A^{-1}(y)), Bg(A^{-1}(z)))-BgA^{-1}(\pi(x, y, z))\\
&=&S(BgA^{-1})(x, y, z).
\end{eqnarray*}
Thus we have
$$
(A,B)\cdot s(g)=s((A, B )\cdot g),
$$
which implies that $s$ is an equivariant section.

Since $f:\g\lon\h$ is a $3$-Lie algebra morphism, it follows that $f\in z(s)$. Moreover, since $E$ is a trivial vector bundle, we have $D^{v}(s)_f=D(S)_f: T_fM\lon E_f$. For any $g\in\Hom(\g, \h),$ we have
\begin{eqnarray}\label{defiDS}
&&D(S)_f(g)(x, y, z)\\
\nonumber&=&\frac{d}{dt}|_{t=0}S(f+tg)(x, y, z)\\
\nonumber&=&\frac{d}{dt}|_{t=0}\Big(\mu(f(x)+tg(x), f(y)+tg(y), f(z)+tg(z))-(f+tg)(\pi(x, y, z))\Big)\\
\nonumber&=&\mu(g(x), f(y), f(z))+\mu(f(x), g(y), f(z))+\mu(f(x), f(y), g(z))-g(\pi(x, y, z))\\
\nonumber&=&\delta(g)(x, y, z).
\end{eqnarray}

By Proposition \ref{protang} and $H^{1}(f)=0$, we have that $f$ is a non-degenerate zero of $s$. By Proposition \ref{proorbR}, there exists an open neighborhood $U\subset \Hom(\g, \h)$ of $f$ and a smooth map $p: U\lon \InnAut(\g\oplus\h)$ such that $p(f')\cdot f=f'$ for $3$-Lie algebra morphism $f'\in U$.
\end{proof}

\emptycomment{
Now, we consider a slight difference of rigidity of morphisms of $3$-Lie algebras. The difference is that the Lie group $\Aut(\h)$ acts on $\Hom(\g, \h)$ instead of the Lie group action of $\InnAut(\h)$ on $\Hom(\g, \h)$.

Define an action of $\Aut(\h)$ on $\Hom(\g, \h)$ by
\begin{equation*}
\cdot: \Aut(\h)\times\Hom(\g, \h)\lon\Hom(\g, \h), \quad A\cdot g=Ag,
\end{equation*}for all $A\in\Aut(\h), g\in\Hom(\g, \h).$ Assume that $f:\g\lon\h$ is a morphism of $3$-Lie algebras, then the orbit $\Orb_f=\{A\cdot f| A\in\Aut(\h) \}$ of $f$ is a manifold. Define a map $\nu_f: \Aut(\h)\lon \Hom(\g, \h)$ by $\nu_f(A)=A\cdot f$. Then $T_f\Orb_f$ is $D(\nu_f)_{I}(\Der(\h))$, where $D(\nu_f)_{I}$ is the tangent map of $\nu_f$ at the unit $I$ of the Lie group $\Aut(\h)$. Moreover, we know that $D(\nu_f)_{I}(\Der(\h))=\{af, \forall a\in\Der(\h)\}$.

Let $f:\g\lon\h$ be a morphism of $3$-Lie algebras. By Proposition \ref{procomap}, it shows that $f^*:H^{n}(\h, \h)\lon H^{n}(f)$ is a group homomorphism. Then we have the following proposition.

\begin{pro}\label{thmrigidity1}{\bf (Rigidity of morphisms of $3$-Lie algebras)}
Let $f:\g\lon\h$ be a morphism of $3$-Lie algebras. If $f^*:H^1(\h, \h)\lon H^{1}(f)$ is surjective, then there exists an open neighborhood $U\subset \Hom(\g, \h)$ of $f$ and a smooth map $p: U\lon \Aut(\h)$ such that $p(f')\cdot f=f'$ for every morphism of $3$-Lie algebras $f'\in U$.
\end{pro}
\begin{proof}
We apply the construction in Theorem \ref{thmrigidity}, it shows that $s$ is an equivariant section. Since $H^{1}(\h, \h)=Z^{1}(\h, \h)=\Der(\h)$ and $f^*: H^{1}(\h, \h)\lon H^{1}(f)$ is surjective, it follows that $Z^{1}(f)=f^{*}\Der(\h)$. Moreover, $D(\nu_f)_{e}(\Der(\h))=\{af, \forall a\in\Der(\h)\}=f^{*}\Der(\h)$ and $\mathrm{ker}(D^{v}(s)_f)=Z^1(f)$, which implies that
$$
\Der(\h)\stackrel{D(\nu_f)_{e}}{\longrightarrow}T_fM\stackrel{D^{v}(s)_f}{\longrightarrow} E_f
$$
is exact. Then $f$ is a non-degenerate zero point of $s$. By Proposition \ref{proorbR}, there exists an open neighborhood $U\subset \Hom(\g, \h)$ of $f$ and a smooth map $p: U\lon \Aut(\h)$ such that $p(f')\cdot f=f'$ for morphism of $3$-Lie algebras $f'\in U$.
\end{proof}
}

Now, we give the sufficient condition on a $1$-cocycle to give a deformation of $f$.
\begin{thm}\label{thmnedef}
Let $f:\g\lon\h$ be a $3$-Lie algebra morphism. If $H^{2}(f)=0$, then the space of $3$-Lie algebra morphisms is a manifold in a neighborhood of $f$, whose  dimension is $\mathrm{dim}Z^{1}(f)$, and any $\alpha\in Z^{1}(f)$ gives rise to a deformation of $f$.
\end{thm}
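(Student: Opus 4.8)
The plan is to reuse the geometric framework from the proof of Theorem \ref{thmrigidity} and to invoke Proposition \ref{proorbR1} in place of Proposition \ref{proorbR}. I keep $M=\Hom(\g,\h)$, the trivial bundle $E=M\times\Hom(\wedge^{3}\g,\h)$, and the equivariant section $s$ given by $s(g)=S(g)$ with $S(g)(x,y,z)=\mu(g(x),g(y),g(z))-g(\pi(x,y,z))$. Then $f\in s^{-1}(0)$, and by Theorem \ref{Maurer-Cartanm} we have $S(g)=l_1(g)+\tfrac{1}{6}l_3(g,g,g)$, which is precisely the Maurer--Cartan function of the controlling $L_\infty$-algebra; moreover $s^{-1}(0)$ is exactly the set of $3$-Lie algebra morphisms. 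By \eqref{defiDS} the vertical derivative is $D^{v}(s)_f=D(S)_f=\delta\colon C^{1}(f)\lon C^{2}(f)$, whose kernel is $Z^{1}(f)$ and whose image is $B^{2}(f)$.

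Next I would introduce the second trivial bundle $F=M\times C^{3}(f)$ together with a vector bundle map $\phi\colon E\lon F$ defined fibrewise, over the point $g\in M$, by the $g$-twisted differential
\begin{equation*}
\phi_g(\beta)=l_1(\beta)+\tfrac{1}{2}l_3(g,g,\beta)=l_1^{g}(\beta),\qquad \beta\in C^{2}(f),
\end{equation*}
which is polynomial, hence smooth, in $g$. The crucial point is the Bianchi-type identity $l_1^{g}(S(g))=0$, valid for \emph{every} $g\in M$ and not only for Maurer--Cartan elements: this is the curved $L_\infty$-relation $l_1^{g}(l_0^{g})=0$ for the twisted brackets $\{l_k^{g}\}$ of \eqref{eqdeftw}, whose curvature is $l_0^{g}=S(g)$, and it can equally well be checked directly from the generalized Jacobi identities of $(F,\{l_k\})$. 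It yields $\phi\circ s=0$ globally. Evaluating at the morphism $f$ gives $\phi_f=\delta\colon C^{2}(f)\lon C^{3}(f)$ (up to the sign in the definition of $\delta$), so the sequence
\begin{equation*}
T_fM\xrightarrow{\,D^{v}(s)_f\,}E_f\xrightarrow{\,\phi_f\,}F_f
\end{equation*}
is the truncated cochain complex $C^{1}(f)\xrightarrow{\delta}C^{2}(f)\xrightarrow{\delta}C^{3}(f)$, whose exactness at $C^{2}(f)$ is equivalent to $B^{2}(f)=Z^{2}(f)$, that is, to $H^{2}(f)=0$.

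With these data the hypotheses of Proposition \ref{proorbR1} are met. Part (i) then shows that $s^{-1}(0)$ is, near $f$, a smooth manifold of dimension $\mathrm{dim}\ker(D^{v}(s)_f)=\mathrm{dim}Z^{1}(f)$; since $s^{-1}(0)$ is exactly the space of $3$-Lie algebra morphisms, this proves the first two assertions. For the last assertion, observe that $s$ vanishes identically on the manifold $s^{-1}(0)$, so $T_f\big(s^{-1}(0)\big)\subseteq\ker(D^{v}(s)_f)=Z^{1}(f)$, and the dimension count forces equality. Hence every $\alpha\in Z^{1}(f)$ is tangent to $s^{-1}(0)$ at $f$ and can be realized as $\dot{f_0}$ for some smooth curve $f_t$ lying in $s^{-1}(0)$; such a curve is a deformation of $f$ with $\dot{f_0}=\alpha$. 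The main obstacle is the second paragraph: one must use the $g$-dependent twist $l_1^{g}$ rather than the fixed coboundary $\delta=\pm\,l_1^{f}$, for only the twisted operator annihilates $s$ globally via the Bianchi identity while still collapsing to $\delta$ at $f$, so that the transversality condition of Proposition \ref{proorbR1} becomes exactly $H^{2}(f)=0$.
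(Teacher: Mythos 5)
Your proposal is correct and follows essentially the same route as the paper: the same trivial bundles $E=M\times C^{2}(f)$ and $F=M\times C^{3}(f)$, the same section $s(g)=S(g)$, the same bundle map $\phi_g(\beta)=l_1(\beta)+\tfrac{1}{2}l_3(g,g,\beta)$, exactness at $E_f$ from $H^2(f)=0$, Proposition \ref{proorbR1} for the manifold statement, and the tangent-space dimension count $T_f\big(s^{-1}(0)\big)=Z^{1}(f)$ for realizing cocycles as deformations. The only difference is presentational: you package the global identity $\phi\circ s=0$ as the curved $L_\infty$ Bianchi identity $l_1^{g}\big(S(g)\big)=0$, whereas the paper verifies it through the explicit relations $l_1(l_1(g))=0$, $l_1(l_3(g,g,g))+3l_3(g,g,l_1(g))=0$, $l_3(g,g,l_3(g,g,g))=0$ in \eqref{homojaco} --- the same computation in different clothing.
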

\begin{proof}
Denote by $M=\Hom(\g, \h), E=\Hom(\g, \h)\times\Hom(\wedge^{3}\g, \h)$ and $F=\Hom(\g, \h)\times\Hom(\wedge^{2}\g\otimes\wedge^{2}\g\wedge\g, \h)$. Then $E$ and $F$ are trivial vector bundles over $M$ with fibers $\Hom(\wedge^{3}\g, \h)$ and $\Hom(\wedge^{2}\g\otimes\wedge^{2}\g\wedge\g, \h)$ respectively. Define a smooth map $\phi: E\lon F$ by
\begin{equation*}
\phi(g, \alpha)=(g, l_1(\alpha)+\frac{1}{2}l_3(g, g, \alpha)), \quad \forall g\in M, \alpha\in\Hom(\wedge^{3}\g, \h),
\end{equation*}
where $l_1, l_3$ are given by \eqref{eqabli}. Then $\phi$ is a vector bundle map.

Define a section $s: M\lon E$ by
\begin{equation*}
s(g)=(g, S(g)), \quad \forall g\in M,
\end{equation*}
where $S: \Hom(\g, \h)\lon\Hom(\wedge^{3}\g, \h)$ is given by
\begin{equation*}
S(g)(x, y, z)=\mu(g(x), g(y), g(z))-g(\pi(x, y, z)),
\end{equation*}for all $g\in\Hom(\g, \h), ~x, y, z\in\g.$

By Proposition \ref{control}, for any $g\in\Hom(\g, \h)$, it follows that
\begin{equation}\label{homojaco}
l_1(l_1(g))=0, \quad l_1(l_3(g, g, g))+3l_3(g, g, l_1(g))=0, \quad l_3(g, g, l_3(g, g, g))=0,
\end{equation}
where $l_1, l_3$ are given by \eqref{eqabli}. Thus we have $\phi\circ s(g)=(g, 0)$, which implies $\phi\circ s=0$. Moreover, assume that $s(f)=0$, it means that $f$ is a $3$-Lie algebra morphism. Denote by $\phi_f=\phi(f, \cdot): E_f\lon F_f$. Then $\phi_f=\delta$. By \eqref{defiDS} and $H^{2}(f)=0$, the following sequence
$$
T_fM\stackrel{D^{v}(s)_f}{\longrightarrow} E_f\stackrel{\phi_f}{\longrightarrow} F_f
$$
is exact. By Proposition \ref{proorbR1}, we obtain that the space $W$ of morphisms from $\g$ to $\h$ is a manifold in a neighborhood of $f$, whose dimension is $\mathrm{dim}Z^{1}(f)$.

Assume $\gamma(t)\in W$, by Proposition \ref{protangentgamma}, we have $\dot{\gamma}(0)\in Z^{1}(f)$. Since $\mathrm{dim}W=\mathrm{dim}Z^{1}(f)$, then $T_fW=Z^{1}(f)$. Thus any $\alpha\in Z^{1}(f)$ gives rise to a deformation of $f$.
\end{proof}
\emptycomment{
Now, we give the sufficient condition on a $1$-cocycle to give a deformation of $f$.
\begin{cor}\label{thmrigidity3}
With the above notations, if $H^{2}(f)=0$, then any $\alpha\in Z^{1}(f)$ gives rise to a deformation of $f$.
\end{cor}
\begin{proof}
Since $f$ is a morphism of $3$-Lie algebras and $H^{2}(f)=0$, we have that the space $W$ of morphisms of $3$-Lie algebras is a manifold in a neighborhood of $f$, whose dimension is $\mathrm{dim}Z^{1}(f)$. Assume $\gamma(t)\in W$, by Proposition \ref{protangentgamma}, we have $\dot{\gamma}(0)\in Z^{1}(f)$. Moreover, $\mathrm{dim}W=\mathrm{dim}Z^{1}(f)$, then $T_fW=Z^{1}(f)$. Thus any $\alpha\in Z^{1}(f)$ gives rise to a deformation of $f$.
\end{proof}
}
At the end of this section, we study the stability of $3$-Lie algebra morphisms.

\begin{defi}
Let $f$ be a $3$-Lie algebra morphism from $(\g, \pi)$ to $(\h, \mu)$. The $3$-Lie algebra morphism $f$ is called {\bf stable} if for any $3$-Lie algebra structure $\varpi$ on $\h$ that is $\huaC^{0}$-close to $\mu$, there exists a $3$-Lie algebra morphism $f'$ from $(\g, \pi)$ to $(\h, \varpi)$ which is $\huaC^{0}$-close to $f$.
\end{defi}

\begin{thm}\label{thmrigidity2}{\bf (Stability of $3$-Lie algebra morphisms)}
Let $f$ be a $3$-Lie algebra morphism from $(\g, \pi)$ to $(\h, \mu)$. If $H^2(f)=0$, then $f$ is stable.
\end{thm}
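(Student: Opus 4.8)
The plan is to reproduce the geometric argument of Theorem \ref{thmnedef} almost verbatim, but now deforming the $3$-Lie bracket on $\h$ rather than the linear map, and to invoke part (ii) of Proposition \ref{proorbR1} in place of part (i). I keep the same manifold $M=\Hom(\g,\h)$ and the same trivial bundles $E=\Hom(\g,\h)\times\Hom(\wedge^{3}\g,\h)$ and $F=\Hom(\g,\h)\times\Hom(\wedge^{2}\g\otimes\wedge^{2}\g\wedge\g,\h)$, together with the section $s$ and the bundle map $\phi$ built from $\mu$ exactly as there. From $H^{2}(f)=0$ and the identity \eqref{defiDS} I already know that $s(f)=0$ and that the sequence $T_fM\stackrel{D^{v}(s)_f}{\longrightarrow} E_f\stackrel{\phi_f}{\longrightarrow} F_f$ is exact, with $\phi_f=\delta$. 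This is precisely the non-degeneracy hypothesis required at the point $f$, and it is inherited unchanged from the previous theorem.

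Next I attach the perturbed data to $\varpi$. Define $s':M\lon E$ by $s'(g)=(g,S'(g))$ with $S'(g)(x,y,z)=\varpi(g(x),g(y),g(z))-g(\pi(x,y,z))$, so that the zeros of $s'$ are exactly the $3$-Lie algebra morphisms from $(\g,\pi)$ to $(\h,\varpi)$. Since $(\h,\varpi)$ is again a $3$-Lie algebra, Proposition \ref{control} applies with $\mu$ replaced by $\varpi$ and produces brackets $l_1',l_3'$; I then set $\phi'(g,\alpha)=(g,l_1'(\alpha)+\frac{1}{2}l_3'(g,g,\alpha))$. Because $l_1',l_3'$ and $S'$ all depend linearly on $\varpi$, the hypothesis that $\varpi$ is $\huaC^{0}$-close to $\mu$ guarantees that $s'$ is $\huaC^{0}$-close to $s$ and $\phi'$ is $\huaC^{0}$-close to $\phi$.

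The one point that genuinely needs checking is $\phi'\circ s'=0$. This is the exact analogue of the computation $\phi\circ s=0$ carried out in Theorem \ref{thmnedef}: it uses only the homotopy Jacobi identities \eqref{homojaco}, which are consequences of the $L_\infty$-structure produced by the $V$-data $(L,F,P,\pi+\varpi)$. Since $\varpi$ is a genuine $3$-Lie bracket on $\h$ and $\pi$ is unchanged, $\pi+\varpi$ is still a Maurer-Cartan element, so the $V$-data of Proposition \ref{control} is valid and the three identities in \eqref{homojaco} hold with $l_1',l_3'$ in place of $l_1,l_3$. The same cancellation as before then gives $\phi'\circ s'(g)=(g,0)$ for every $g$.

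With all hypotheses of Proposition \ref{proorbR1}(ii) now in hand — namely $s(f)=0$, exactness of $T_fM\to E_f\to F_f$, the relation $\phi'\circ s'=0$, and the $\huaC^{0}$-closeness of $(s',\phi')$ to $(s,\phi)$ — I conclude that there exists $f'\in\Hom(\g,\h)$ close to $f$ with $s'(f')=0$. By construction $f'$ is a $3$-Lie algebra morphism from $(\g,\pi)$ to $(\h,\varpi)$ that is $\huaC^{0}$-close to $f$, which is exactly the stability assertion. I expect the main (indeed only) obstacle to be the verification that passing from $\mu$ to the nearby structure $\varpi$ preserves $\phi'\circ s'=0$; once that is secured via the $L_\infty$-formalism, everything else is a direct transcription of the proof of Theorem \ref{thmnedef}.
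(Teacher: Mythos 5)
Your proposal is correct and is essentially the paper's own proof: the paper likewise keeps $M$, $E$, $F$ fixed, packages the perturbed data as a family $(s_\varpi,\phi_\varpi)$ indexed by the space of $3$-Lie structures $\varpi$ on $\h$, verifies $\phi_\varpi\circ s_\varpi=0$ by applying Proposition \ref{control} with $\Delta=\pi+\varpi$ together with the identities \eqref{homojaco}, gets exactness of $T_fM\to E_f\to F_f$ at $f$ from $H^2(f)=0$ and \eqref{defiDS}, and concludes via Proposition \ref{proorbR1}(ii). The only cosmetic difference is that the paper records continuity of $\varpi\mapsto(s_\varpi,\phi_\varpi)$ in the $\huaC^1$-topology on sections, where you argue the required $\huaC^0$-closeness directly from the (affine) linear dependence on $\varpi$; also note $l_1'=l_1$, since $[\pi,\cdot]_{\Ri}$ does not involve $\varpi$ at all.
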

\begin{proof}
Let $M=\Hom(\g, \h), E=\Hom(\g, \h)\times\Hom(\wedge^{3}\g, \h)$ and $F=\Hom(\g, \h)\times\Hom(\wedge^{2}\g\otimes\wedge^{2}\g\wedge\g, \h)$. Then $E$ and $F$ are trivial vector bundles over $M$ with fiber $\Hom(\wedge^{3}\g, \h)$ and $\Hom(\wedge^{2}\g\otimes\wedge^{2}\g\wedge\g, \h)$ respectively.

Denote by $\Omega$ the space of all $3$-Lie algebra structures on $\h$. For any $\varpi\in\Omega$, there is a section $s_\varpi\in\Gamma(E)$ given by
$$
s_\varpi(g)=(g, S_\varpi(g)), \quad\text{where} \quad S_\varpi(g)(x, y, z)=\varpi(g(x), g(y), g(z))-g(\pi(x, y, z)), \quad \forall x, y, z\in\g,
$$
and a bundle map $\phi_\varpi: E\lon F$,
$$
\phi_{\varpi}(g, \alpha)=(g, [\pi, \alpha]_{\Ri}+\frac{1}{2}[[[\varpi, g]_{\Ri}, g]_{\Ri}, \alpha]_{\Ri}).
$$
Moreover, the map
$$
\Omega\lon\Gamma(E)\times\Gamma(\Hom(E, F)), \quad \varpi\lon (s_{\varpi}, \phi_{\varpi}),
$$
is continuous under the $\huaC^{1}$-topology on the space of sections.

We apply Proposition \ref{control} with $\Delta=\pi+\varpi$. By \eqref{eqabli}, we have
$$
l_1(\alpha)=[\pi, \alpha]_{\Ri}, \quad l_3(g, g, \alpha)=[[[\varpi, g]_{\Ri}, g]_{\Ri}, \alpha]_{\Ri}, \quad S_\varpi(g)=l_1(g)+\frac{1}{6}l_3(g, g, g).
$$
Since
\begin{equation*}
l_1(l_1(g))=0, \quad l_1(l_3(g, g, g))+3l_3(g, g, l_1(g))=0, \quad l_3(g, g, l_3(g, g, g))=0,
\end{equation*}
then $\phi_{\varpi}\circ s_{\varpi}(g)=(g, 0)$ for all $\varpi\in \Omega$.

By \eqref{defiDS} and $H^{2}(f)=0$, it shows that
$$
T_fM\stackrel{D^{v}(s_\mu)_f}{\longrightarrow} E_f\stackrel{(\phi_{\mu})_f}{\longrightarrow} F_f
$$
is exact. By Proposition \ref{proorbR1}, for any $\varpi$ sufficiently close to $\mu$, there exists a $f'\in M$ close to $f$ such that $f'$ is a $3$-Lie algebra morphism from $(\g, \pi)$ to $(\h, \varpi)$.
\end{proof}

\section{Deformations of $3$-Lie subalgebras and stability}\label{sec:rig-sub}
In this section, we study deformations of $3$-Lie subalgebras using the established cohomology theory. We show that the condition $H^2(\h, \g/\h)=0$ will imply the space of $3$-Lie subalgebras is a manifold in a neighborhood of $\h$. We also show a stability theorem of a $3$-Lie subalgebra under the condition $H^2(\h, \g/\h)=0$.

Let $(\g, \pi)$ be a $3$-Lie algebra and $\h\subset\g$ be a $k$-dimensional $3$-Lie subalgebra of $(\g, \pi)$. Denote by $(\oplus_{n=1}^{+\infty}C^{n}(\h, \g/\h), \partial)=\Big(\oplus_{n\geq 1}\Hom(\underbrace{\wedge^{2}\h\otimes\cdots\otimes\wedge^{2}\h}_{n-1}\wedge\h,\g/\h), \partial\Big)$ the cochain complex of the $3$-Lie algebra $(\h, \pi)$ with coefficients in the representation $(\g/\h, \overline{\ad})$, where $\overline{\ad}$ is constructed in Proposition \ref{exad}. We denote the set of $n$-cocycles by $Z^{n}(\h, \g/\h)$, the $n$-th cohomology group by $H^{n}(\h, \g/\h)$. 
\emptycomment{
\begin{eqnarray*}
&&\partial(\theta)(u_1\wedge v_1, \cdots u_k\wedge v_n, u_{n+1})\\
&=&\sum_{1\leq i<j\leq n}(-1)^{i}\theta\Big(u_1\wedge v_1, \cdots, \widehat{u_i\wedge v_i}, \cdots, u_{j-1}\wedge v_{j-1}, \pi(u_i, v_i, u_j)\wedge v_j\\
&&+u_j\wedge\pi(u_i, v_i, v_j), \cdots, u_{n}\wedge v_n, u_{n+1}\Big)+\sum_{i=1}^{n}(-1)^{i}\theta\Big(u_1\wedge v_1,\cdots, \widehat{u_i\wedge v_i}, \cdots, u_{n}\wedge v_n, \pi(u_i, v_i, u_{n+1})\Big)\\
&&+\sum_{i=1}^{n}(-1)^{i+1}\overline{\ad}(u_i, v_i)\theta(u_1\wedge v_1,\cdots, \widehat{u_i\wedge v_i}, \cdots, u_n\wedge v_n, u_{n+1})\\
&&+(-1)^{n+1}\Big(\overline{\ad}(v_n, u_{n+1})\theta(u_1\wedge v_1,\cdots, u_{n-1}\wedge v_{n-1}, u_{n})+\overline{\ad}(u_{n+1}, u_n)\theta(u_1\wedge v_1,\cdots, u_{n-1}\wedge v_{n-1}, v_{n})\Big).
\end{eqnarray*}
}

Denote by $\mathrm{Gr}_{k}(\g)$ the Grassmannian manifold of $k$-dimensional subspaces of $\g$, i.e.
$$
\mathrm{Gr}_{k}(\g)=\{U\subset\g|U~~\text{is a}~~k-\text{dimensional subspace of}~~\g\}.
$$
\begin{defi}
Let $(\g, \pi)$ be a $3$-Lie algebra and $\h\subset\g$ be a $k$-dimensional $3$-Lie subalgebra of $(\g, \pi)$. A deformation of $\h$ in $\g$ is a smooth curve $\h_t$ in $\mathrm{Gr}_{k}(\g)$ such that $\h_0=\h$ and $\h_t$ is a $3$-Lie subalgebra of $\g$ for all $t$.
\end{defi}

Since $\h_t$ is a smooth curve in $\mathrm{Gr}_{k}(\g)$ such that $\h_0=\h$, there exists a curve $A(t)\in\GL(\g)$ such that $A(t)\h=\h_t$. Denote by $T_\h\mathrm{Gr}_{k}(\g)$ the tangent space of $\mathrm{Gr}_{k}(\g)$ at $\h$, it is known that  $T_\h\mathrm{Gr}_{k}(\g)\cong\Hom(\h, \g/\h)$. Define $\dot{\h_0}:\h\lon\g/\h$ by $$\dot{\h_0}(u)=\frac{d}{dt}|_{t=0}A(t)u~~\mathrm{mod}~~\h.$$

\begin{pro}\label{protangentgamma11}
With the above notations, $\dot{\h_0}$ is a $1$-cocycle in $C^{1}(\h, \g/\h)$, i.e. $[\dot{\h_0}]$ in $H^{1}(\h, \g/\h)$.
\end{pro}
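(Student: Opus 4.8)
The plan is to lift the deformation to a curve in $\GL(\g)$ and to extract the cocycle identity from a single structure-preserving equation, which after one differentiation and projection to $\g/\h$ becomes exactly $\partial(\dot{\h_0})=0$. First I would fix a smooth lift $A(t)\in\GL(\g)$ with $A(0)=\Id$ and $A(t)\h=\h_t$, so that, writing $P:\g\lon\g/\h$ for the projection and $\dot{A}(0)=\frac{d}{dt}|_{t=0}A(t)$, the tangent cochain is $\dot{\h_0}=P\circ\dot{A}(0)|_{\h}$. (A brief, standard Grassmannian argument shows $\dot{\h_0}$ is independent of the chosen lift.) Using the $n=1$ case of the coboundary operator $\partial$ recalled before the proposition, the goal is to verify, for all $u,v,w\in\h$,
\begin{equation*}
\partial(\dot{\h_0})(u\wedge v,w)=-\dot{\h_0}(\pi(u,v,w))+\overline{\ad}_{u\wedge v}\dot{\h_0}(w)+\overline{\ad}_{v\wedge w}\dot{\h_0}(u)+\overline{\ad}_{w\wedge u}\dot{\h_0}(v)=0.
\end{equation*}

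The central observation is that, since $\h_t=A(t)\h$ is a $3$-Lie subalgebra and $u,v,w\in\h$, the element
\begin{equation*}
F(t)\defbe A(t)^{-1}\pi\big(A(t)u,A(t)v,A(t)w\big)
\end{equation*}
lies in $\h$ for every $t$. As $F$ is a smooth curve in the fixed subspace $\h$, its derivative $\dot{F}(0)$ again lies in $\h$, hence $P(\dot{F}(0))=0$. I would then compute $\dot{F}(0)$ by the Leibniz rule, using $A(0)=\Id$, the identity $\frac{d}{dt}|_{0}A(t)^{-1}=-\dot{A}(0)$, and multilinearity of $\pi$, obtaining
\begin{equation*}
\dot{F}(0)=-\dot{A}(0)\pi(u,v,w)+\pi(\dot{A}(0)u,v,w)+\pi(u,\dot{A}(0)v,w)+\pi(u,v,\dot{A}(0)w).
\end{equation*}

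It remains to apply $P$ and match the four resulting terms with the four terms of $\partial(\dot{\h_0})(u\wedge v,w)$. The first is immediate: $\pi(u,v,w)\in\h$, so $P(\dot{A}(0)\pi(u,v,w))=\dot{\h_0}(\pi(u,v,w))$. For the remaining three I would use the key feature of $\overline{\ad}$ coming from Proposition \ref{exad}: for $h_1,h_2\in\h$ the assignment $x\mapsto P(\pi(h_1,h_2,x))$ is well defined on $\g/\h$ (changing $x$ by an element of $\h$ alters $\pi(h_1,h_2,x)$ only by an element of $\h$, as $\h$ is a subalgebra) and equals $\overline{\ad}_{h_1\wedge h_2}$. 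Combined with the skew-symmetry of $\pi$ (cyclic permutations of the three arguments are even), this yields $P(\pi(\dot{A}(0)u,v,w))=\overline{\ad}_{v\wedge w}\dot{\h_0}(u)$, $P(\pi(u,\dot{A}(0)v,w))=\overline{\ad}_{w\wedge u}\dot{\h_0}(v)$, and $P(\pi(u,v,\dot{A}(0)w))=\overline{\ad}_{u\wedge v}\dot{\h_0}(w)$. Substituting into $P(\dot{F}(0))=0$ reproduces exactly the displayed cocycle equation, so $\dot{\h_0}\in Z^{1}(\h,\g/\h)$ and defines a class in $H^{1}(\h,\g/\h)$.

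I expect the only genuine subtlety to be this last matching step: the operator $\overline{\ad}$ is defined through an auxiliary section $s$, whereas the differentiated expression involves $\pi$ applied directly to $\dot{A}(0)$ of the arguments, so the reconciliation rests precisely on the section-independence established in Proposition \ref{exad}, together with careful bookkeeping of which pair of $\h$-arguments sits in each $\overline{\ad}$ and the correct cyclic reordering of $\pi$. Everything else reduces to a routine Leibniz computation.
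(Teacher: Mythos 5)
Your proof is correct and follows essentially the same route as the paper's: both lift the deformation to $A(t)\in\GL(\g)$, observe that $t\mapsto A(t)^{-1}\pi\big(A(t)u,A(t)v,A(t)w\big)$ is a curve in the fixed subspace $\h$ whose derivative therefore vanishes mod $\h$, and then identify the projected Leibniz terms with the $\overline{\ad}$-terms of $\partial(\dot{\h_0})$ via Proposition \ref{exad}. The only cosmetic difference is that the paper first subtracts $\pi(u,v,w)$ (its $\gamma_t$, with $\gamma_0=0$) so as to differentiate $A(t)\gamma_t$ and avoid differentiating $A(t)^{-1}$, whereas you use $\frac{d}{dt}\big|_{t=0}A(t)^{-1}=-\dot{A}(0)$ directly, which is equally valid since $A(0)=\Id$.
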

\begin{proof}
For any $u, v, w\in\h$, define $\gamma_t\in\Hom(\wedge^{3}\h, \h)$ by $$\gamma_t(u, v, w)=(A(t))^{-1}\pi(A(t)u, A(t)v, A(t)w)-\pi(u, v, w).$$ It obvious that $\gamma_t(u, v, w)\in\h$ and $\gamma_0(u, v, w)=0$. Since $$\frac{d}{dt}|_{t=0}A(t)\gamma_t(u, v, w)=\frac{d}{dt}|_{t=0}\gamma_t(u, v, w)+\frac{d}{dt}|_{t=0}A(t)\gamma_0(u, v, w)=\frac{d}{dt}|_{t=0}\gamma_t(u, v, w)\in\h,$$
thus we have $\frac{d}{dt}|_{t=0}A(t)\gamma_t(u, v, w)~~\mathrm{mod}~~\h=0$. On the other hand,
\begin{eqnarray*}
&&\frac{d}{dt}|_{t=0}A(t)\gamma_t(u, v, w)\\
&=&\frac{d}{dt}|_{t=0}\Big(\pi(A(t)u, A(t)v, A(t)w)-A(t)\pi(u, v, w)\Big)\\
&=&\pi(\frac{d}{dt}|_{t=0}A(t)u, v, w)+\pi(u, \frac{d}{dt}|_{t=0}A(t)v, w)+\pi(u, v, \frac{d}{dt}|_{t=0}A(t)w)-\frac{d}{dt}|_{t=0}A(t)\pi(u, v, w),
\end{eqnarray*}
which implies that
\begin{eqnarray*}
&&\overline{\ad}_{u\wedge v}(\dot{\h_0}(w))+\overline{\ad}_{v\wedge w}(\dot{\h_0}(u))+\overline{\ad}_{w\wedge u}(\dot{\h_0}(v))-\dot{\h_0}(\pi(u, v, w))\\
&=&\pi(\frac{d}{dt}|_{t=0}A(t)u, v, w)~~\mathrm{mod}~~\h+\pi(u, \frac{d}{dt}|_{t=0}A(t)v, w)~~\mathrm{mod}~~\h+\pi(u, v, \frac{d}{dt}|_{t=0}A(t)w)~~\mathrm{mod}~~\h\\
&&-\frac{d}{dt}|_{t=0}A(t)\pi(u, v, w)~~\mathrm{mod}~~\h=0.
\end{eqnarray*}
Thus $\partial(\dot{\h_0})(u, v, w)=0$, i.e. $[\dot{\h_0}]\in H^{1}(\h, \g/\h)$.
\end{proof}

\begin{thm}\label{thm11111}
 Let $(\g, \pi)$ be a $3$-Lie algebra and $\h\subset\g$ be a $k$-dimensional $3$-Lie subalgebra of $(\g, \pi)$. If $H^{2}(\h, \g/\h)=0$, then the space of $3$-Lie subalgebras is a manifold in a neighborhood of $\h$, whose dimension is $\mathrm{dim}Z^{1}(\h, \g/\h)$. Moreover, if $H^{2}(\h, \g/\h)=0$, any $\alpha\in Z^{1}(\h, \g/\h)$ gives rise to a deformation of $\h$.
\end{thm}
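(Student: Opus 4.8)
The plan is to run the argument of Theorem~\ref{thmnedef} over the Grassmannian rather than over the vector space $\Hom(\g,\h)$. Take $M=\mathrm{Gr}_k(\g)$, and recall $T_U M\cong\Hom(U,\g/U)$, so that $T_\h M=C^1(\h,\g/\h)$. Let $E\to M$ be the vector bundle whose fibre over a subspace $U$ is the space $\Hom(\wedge^3 U,\g/U)$ of totally antisymmetric maps (the totally antisymmetric part of $C^2$), and let $F\to M$ be the bundle whose fibre over $U$ is $\Hom(\wedge^2 U\otimes\wedge^2 U\wedge U,\g/U)$; both are built naturally from the tautological subbundle and the quotient bundle. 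Define the section $s\in\Gamma(E)$ by $s(U)(x,y,z)=\pi(x,y,z)\bmod U$ for $x,y,z\in U$. Since $\pi$ is totally antisymmetric, $s(U)$ indeed lies in $\Hom(\wedge^3 U,\g/U)$, its zero set is exactly the set of $3$-Lie subalgebras near $\h$, and $s(\h)=0$ because $\h$ is a subalgebra.

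First I would construct a bundle map $\phi\colon E\to F$ with $\phi\circ s=0$, playing the role of the second differential, where over $U$ the map $\phi$ is the $\overline{\ad}$-twisted coboundary built from $\pi$ and the curvature $s(U)$; the identity $\phi\circ s=0$ is the Bianchi-type identity obtained from the fundamental identity of $\pi$, exactly paralleling the three relations used in the proof of Theorem~\ref{thmnedef}. Next I would compute the linear maps at $\h$. Working in the graph chart of $\mathrm{Gr}_k(\g)$ attached to a complement $\frkm$ with $\g=\h\oplus\frkm$ and $\frkm\cong\g/\h$, a first-order expansion of $s$ along $\theta\in\Hom(\h,\frkm)$ gives, at the point $\h$,
$$D^v(s)_\h(\theta)(u,v,w)=\overline{\ad}_{u\wedge v}\theta(w)+\overline{\ad}_{v\wedge w}\theta(u)+\overline{\ad}_{w\wedge u}\theta(v)-\theta(\pi(u,v,w)),$$
which is precisely $\partial(\theta)$ for the complex of Proposition~\ref{exad}. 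As in Proposition~\ref{protangentgamma11}, this expression is automatically totally antisymmetric in $u,v,w$, so it lands in $E_\h$ and $\ker D^v(s)_\h=Z^1(\h,\g/\h)$; likewise I would identify $\phi_\h$ with the restriction of $\partial\colon C^2(\h,\g/\h)\to C^3(\h,\g/\h)$ to $E_\h$.

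With these identifications, the hypothesis $H^2(\h,\g/\h)=0$ gives $Z^2(\h,\g/\h)=B^2(\h,\g/\h)$, and since $B^2\subset E_\h$ we get $\ker\phi_\h=Z^2\cap E_\h=Z^2=B^2=\Img D^v(s)_\h$, so that $T_\h M\stackrel{D^v(s)_\h}{\longrightarrow}E_\h\stackrel{\phi_\h}{\longrightarrow}F_\h$ is exact. Proposition~\ref{proorbR1}(i) then shows that $s^{-1}(0)$, the space of $3$-Lie subalgebras, is locally a manifold $W$ around $\h$ of dimension $\mathrm{dim}\,\ker D^v(s)_\h=\mathrm{dim}\,Z^1(\h,\g/\h)$. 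For the final assertion, $T_\h W=\ker D^v(s)_\h=Z^1(\h,\g/\h)$; by Proposition~\ref{protangentgamma11} the velocity $\dot{\h_0}$ of any deformation lies in $Z^1(\h,\g/\h)$, and conversely, since $W$ is a manifold with this tangent space, each $\alpha\in Z^1(\h,\g/\h)$ is the velocity of a smooth curve in $W$ through $\h$, i.e.\ gives a deformation of $\h$.

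The step I expect to be the main obstacle is the construction of $\phi$ and the verification $\phi\circ s=0$ from the fundamental identity, together with the matching of both $D^v(s)_\h$ and $\phi_\h$ to the differentials of $(C^\bullet(\h,\g/\h),\partial)$. Two points need care: the complement used to represent $\g/U$ varies with $U$, so the first-order expansion of $s$ must be carried out intrinsically; and because the curvature is totally antisymmetric while $C^2$ is not, one only obtains exactness from the implication $H^2=0\Rightarrow Z^2=B^2$, which is exactly the direction needed here.
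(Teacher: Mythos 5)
Your proposal follows essentially the same route as the paper's proof: the same bundles $E,F$ over $\mathrm{Gr}_k(\g)$, the section $U\mapsto p\circ\pi|_{\wedge^3 U}$, a splitting-dependent bundle map $\phi$ with $\phi\circ s=0$ from the fundamental identity, the identifications $D^v(s)_\h=\partial|_{C^1}$ and $\phi_\h=\partial|_{E_\h}$, and then Proposition \ref{proorbR1} together with Proposition \ref{protangentgamma11} for the tangent-space statement. In fact your explicit chain $\ker\phi_\h=Z^2\cap E_\h=B^2=\Img D^v(s)_\h$, using that coboundaries of $1$-cochains are totally antisymmetric, spells out a point the paper's exactness claim leaves implicit, so the proposal is correct as it stands.
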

\begin{proof}
Let $M=\mathrm{Gr}_{k}(\g)$ and $E=\coprod\limits_{V\in M}\Hom(\wedge^{3}V, \g/V), F=\coprod\limits_{V\in M}\Hom(\wedge^{2}V\otimes\wedge^{2}V\wedge V, \g/V)$. Then $E$ and $F$ are vector bundles over $M$ whose fibres of $V$ are $\Hom(\wedge^{3}V, \g/V)$ and $\Hom(\wedge^{2}V\otimes\wedge^{2}V\wedge V, \g/V)$ respectively.

Consider the following exact sequence of vector spaces,
$$
0\longrightarrow U\stackrel{i}{\longrightarrow}\g\stackrel{p}{\longrightarrow}\g/U\longrightarrow0,
$$
Fix an inner product on $\g$, then $\g=U\oplus U^\bot$ and denote by $x=x_U+x_{U^{\bot}}$. There is a section $s:\g/U\lon\g$, i.e. $p\circ s=\Id$ as following
$$
s([x])=x_{U^{\bot}}, \quad \forall [x]\in\g/U.
$$
Define a smooth map $\phi: E\lon F$ by
\begin{equation*}
\phi(U, \alpha)=(U, \tilde{\partial}(\alpha)), \quad \forall U\in M, \alpha\in\Hom(\wedge^{3}U, \g/U),
\end{equation*}
where $\tilde{\partial}:\Hom(\wedge^{3}U, \g/U)\lon\Hom(\wedge^{2}U\otimes\wedge^{2}U\wedge U, \g/U)$ is given by
\begin{eqnarray*}
&&\tilde{\partial}(\alpha)(u_1\wedge v_1, u_2\wedge v_2, w)\\
&=&-\alpha\Big((\pi(u_1, v_1, u_2)-sp(\pi(u_1, v_1, u_2)))\wedge v_2+u_2\wedge(\pi(u_1, v_1, v_2)-sp(\pi(u_1, v_1, v_2))), w\Big)\\
&&-\alpha(u_2\wedge v_2, \pi(u_1, v_1, w)-sp(\pi(u_1, v_1, w)))+\alpha(u_1\wedge v_1, \pi(u_2, v_2, w)-sp(\pi(u_2, v_2, w)))\\
&&+p\Big(\pi(u_1, v_1, s(\alpha(u_2\wedge v_2, w))-\pi(u_2, v_2, s(\alpha(u_1\wedge v_1, w)))\\
&&-\pi(v_2, w, s(\alpha(u_1\wedge v_1, u_2)))-\pi(w, u_2, s(\alpha(u_1\wedge v_1, v_2)))\Big).
\end{eqnarray*}
Then $\phi$ is a vector bundle map.

Define a section $\sigma: M\lon E$ by
\begin{equation*}
\sigma(U)=(U, \Lambda(U)), \quad \forall U\in M,
\end{equation*}
where $\Lambda(U)\in\Hom(\wedge^{3}U, \g/U)$ is given by
\begin{equation}\label{equationlam}
\Lambda(U)(u, v, w)=p(\pi(u, v, w)),\quad \forall U\in M, ~u, v, w\in U.
\end{equation}
Since $(\g, \pi)$ is a $3$-Lie algebra, we have that
\begin{eqnarray*}
&&\tilde{\partial}(\Lambda(U))(u_1\wedge v_1, u_2\wedge v_2, w)\\
&=&-p\Big(\pi(\pi(u_1, v_1, u_2)-sp(\pi(u_1, v_1, u_2)), v_2, w)\Big)-p\Big(\pi(u_2, \pi(u_1, v_1, v_2)-sp(\pi(u_1, v_1, v_2)), w)\Big)\\
&&-p\Big(\pi(u_2, v_2, \pi(u_1, v_1, w)-sp(\pi(u_1, v_1, w)))\Big)+p\Big(\pi(u_1, v_1, \pi(u_2, v_2, w)-sp(\pi(u_2, v_2, w)))\Big)\\
&&+p\Big(\pi(u_1, v_1, sp(\pi(u_1, v_1, w)))\Big)-p\Big(\pi(u_2, v_2, sp(\pi(u_1, v_1, w)))\Big)\\
&&-p\Big(\pi(v_2, w, sp(\pi(u_1, v_1, u_2)))\Big)-p\Big(\pi(w, u_2, sp(\pi(u_1, v_1, v_2)))\Big)\\
&=&0.
\end{eqnarray*}
Thus $\phi(\sigma(U))=(U, 0)$, i.e. $\phi\circ\sigma=0$.

Moreover, by \eqref{equationlam}, for $\h\in M$, we have that $\sigma(\h)=0$ if and only if $\h$ is a $3$-Lie subalgebra of $\g$. Denote $\phi_\h: E_\h\lon F_\h$ by $\phi(\h, \cdot)$, then $\phi_\h$ is the coboundary operator of $3$-Lie algebra $(\h, \pi)$ with coefficients in the representation $\g/\h$. For any $a\in\Hom(\h,\g/\h)\cong T_{\h}M$, there exists $A(t)\in\GL(\g)$ such that $a(u)=p(\frac{d}{dt}|_{t=0}A(t)u)$, we have
\begin{eqnarray*}
D^{v}(\sigma)_\h(a)(u, v, w)&=&\frac{d}{dt}|_{t=0}A(t)^{*}\Big(\Lambda(A(t)\h)\Big)(u, v, w)\\
&=&p(\frac{d}{dt}|_{t=0}A(t)^{-1}\Big(\pi(A(t)u, A(t)v, A(t)w)\Big))\\
&=&p\Big(-a(\pi(u, v, w))+\pi(a(u), v, w)+\pi(u, a(v), w)+\pi(u, v, a(w))\Big)\\
&=&\partial(a)(u, v, w),
\end{eqnarray*}
where $A(t)^{*}:\Hom(\h_t, \g/\h_{t})\lon\Hom(\h, \g/\h)$ is given by
$$
A(t)^{*}(f_t)(u)=[A(t)^{-1}\Big(f_t(A(t)u)\Big)], \quad \forall~~f_t\in\Hom(\g/\h_t, \g/\h_t), u\in\h.
$$
Since $H^{2}(\h, \g/\h)=0$, it follows that
$$
T_{\h}M\stackrel{D^{v}(\sigma)_\h}{\longrightarrow} E_\h\stackrel{\phi_\h}{\longrightarrow} F_\h
$$
is exact. By Proposition \ref{proorbR1}, we obtain that the space $H$ of $3$-Lie subalgebras is a manifold in a neighborhood of $\h$, whose dimension is $\mathrm{dim}Z^{1}(\h, \g/\h)$.

Moreover, assume $\gamma(t)\in H$, by Proposition \ref{protangentgamma11}, we have $\dot{\gamma}(0)\in Z^{1}(\h, \g/\h)$. Since $\mathrm{dim}H=\mathrm{dim}Z^{1}(\h, \g/\h)$, then it means that $T_{\h}H=Z^{1}(\h, \g/\h)$. Thus any $\alpha\in Z^{1}(\h, \g/\h)$ gives rise to a deformation of $\h$.
\end{proof}

\begin{defi}
 Let $(\g, \pi)$ be a $3$-Lie algebra and $\h\subset\g$ be a $k$-dimensional $3$-Lie subalgebra of $(\g, \pi)$. The $\h$ is called stable if for any $3$-Lie algebra structure $\pi'$ on $\g$ is $\huaC^{0}$-close to $\pi$, there exists a $k$-dimensional $3$-Lie subalgebra $\h'\subset\g$ which is $\huaC^{0}$-close to $\h$.
\end{defi}

\begin{thm}\label{staoflies}{\bf (Stability of $3$-Lie subalgebra)}
 Let $(\g, \pi)$ be a $3$-Lie algebra and $\h\subset\g$ be a $k$-dimensional $3$-Lie subalgebra of $(\g, \pi)$. If $H^{2}(\h, \g/\h)=0$, then $\h$ is a stable subalgebra of $\g$.
\end{thm}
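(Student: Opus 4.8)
The plan is to repeat the construction in the proof of Theorem \ref{thm11111} verbatim on the fixed base manifold $M=\mathrm{Gr}_{k}(\g)$ and the fixed vector bundles $E=\coprod_{V\in M}\Hom(\wedge^{3}V,\g/V)$ and $F=\coprod_{V\in M}\Hom(\wedge^{2}V\otimes\wedge^{2}V\wedge V,\g/V)$, but now allowing the $3$-Lie structure on $\g$ to vary. Observe that $M$, $E$, and $F$ depend only on the underlying vector space $\g$ and the integer $k$, not on $\pi$, so they remain unchanged under a deformation of $\pi$. The stability statement then follows from part (ii) of Proposition \ref{proorbR1}, which is exactly the statement about simultaneously perturbing a section and a compatible bundle map.

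First I would attach to every $3$-Lie algebra structure $\pi'$ on $\g$ a section $\sigma_{\pi'}\in\Gamma(E)$ and a bundle map $\phi_{\pi'}:E\lon F$ given by the same formulas as $\sigma$ and $\phi$ in Theorem \ref{thm11111}, with $\pi$ replaced by $\pi'$; explicitly $\sigma_{\pi'}(U)=(U,\Lambda_{\pi'}(U))$ with $\Lambda_{\pi'}(U)(u,v,w)=p(\pi'(u,v,w))$, and $\phi_{\pi'}(U,\alpha)=(U,\tilde{\partial}_{\pi'}(\alpha))$ where $\tilde{\partial}_{\pi'}$ is the $\pi'$-analogue of the operator $\tilde{\partial}$. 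Repeating the algebraic computation of Theorem \ref{thm11111} word for word, which used only the fundamental identity of the structure, yields $\phi_{\pi'}\circ\sigma_{\pi'}=0$ for every such $\pi'$. Since $\Lambda_{\pi'}$ and $\tilde{\partial}_{\pi'}$ depend linearly, hence continuously, on $\pi'$, the assignment
$$
\pi'\longmapsto(\sigma_{\pi'},\phi_{\pi'})\in\Gamma(E)\times\Gamma(\Hom(E,F))
$$
is continuous in the $\huaC^{1}$-topology; in particular, if $\pi'$ is $\huaC^{0}$-close to $\pi$, then $\sigma_{\pi'}$ is $\huaC^{0}$-close to $\sigma=\sigma_{\pi}$ and $\phi_{\pi'}$ is $\huaC^{0}$-close to $\phi=\phi_{\pi}$.

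Next I would check that the unperturbed data satisfy the hypotheses of Proposition \ref{proorbR1}(ii) at the point $\h$. Since $\h$ is a $3$-Lie subalgebra of $(\g,\pi)$ we have $\sigma_{\pi}(\h)=0$, and the computation in Theorem \ref{thm11111} identifies the vertical derivative $D^{v}(\sigma_{\pi})_{\h}$ and $(\phi_{\pi})_{\h}$ with consecutive coboundary operators of the complex $(C^{*}(\h,\g/\h),\partial)$. Therefore the hypothesis $H^{2}(\h,\g/\h)=0$ gives exactness of
$$
T_{\h}M\stackrel{D^{v}(\sigma_{\pi})_{\h}}{\longrightarrow}E_{\h}\stackrel{(\phi_{\pi})_{\h}}{\longrightarrow}F_{\h}.
$$

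Finally, applying Proposition \ref{proorbR1}(ii) with $s=\sigma_{\pi}$, $\phi=\phi_{\pi}$, $s'=\sigma_{\pi'}$, and $\phi'=\phi_{\pi'}$ produces, for every $\pi'$ sufficiently $\huaC^{0}$-close to $\pi$, a subspace $\h'\in\mathrm{Gr}_{k}(\g)$ close to $\h$ with $\sigma_{\pi'}(\h')=0$. By the $\pi'$-version of \eqref{equationlam}, the vanishing $\sigma_{\pi'}(\h')=0$ means exactly that $p(\pi'(u,v,w))=0$ for all $u,v,w\in\h'$, where $p:\g\lon\g/\h'$ is the projection; that is, $\pi'(u,v,w)\in\h'$, so $\h'$ is a $k$-dimensional $3$-Lie subalgebra of $(\g,\pi')$ that is $\huaC^{0}$-close to $\h$, which is the desired stability. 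The only steps demanding genuine care are the verification that $\phi_{\pi'}\circ\sigma_{\pi'}=0$ follows from the $3$-Lie identity for $\pi'$ alone, so that the single computation of Theorem \ref{thm11111} transports unchanged, and the continuity of $\pi'\mapsto(\sigma_{\pi'},\phi_{\pi'})$ in the topology required by Proposition \ref{proorbR1}; both are routine given the explicit linear dependence of the data on the structure constants of $\pi'$.
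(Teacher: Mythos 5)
Your proof is correct and follows essentially the same route as the paper: the paper likewise fixes $M=\mathrm{Gr}_{k}(\g)$ with the $\pi$-independent bundles $E$ and $F$, assigns to each $3$-Lie structure $\varpi\in\Omega$ the pair $(\sigma_\varpi,\phi_\varpi)$ satisfying $\phi_\varpi\circ\sigma_\varpi=0$, notes the continuity of $\varpi\mapsto(\sigma_\varpi,\phi_\varpi)$ in the $\huaC^{1}$-topology, and deduces exactness of $T_{\h}M\lon E_\h\lon F_\h$ from $H^{2}(\h,\g/\h)=0$ exactly as in Theorem \ref{thm11111} before invoking Proposition \ref{proorbR1}(ii). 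Your closing observation that $\sigma_{\pi'}(\h')=0$ means precisely $\pi'(u,v,w)\in\h'$ is the same reading of \eqref{equationlam} the paper uses implicitly.
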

\begin{proof}
Let $M=\mathrm{Gr}_{k}(\g)$ and $E=\coprod\limits_{V\in M}\Hom(\wedge^{3}V, \g/V), F=\coprod\limits_{V\in M}\Hom(\wedge^{2}V\otimes\wedge^{2}V\wedge V, \g/V)$. Then $E$ and $F$ are vector bundles over $M$ whose fibres of $V$ are $\Hom(\wedge^{3}V, \g/V)$ and $\Hom(\wedge^{2}V\otimes\wedge^{2}V\wedge V, \g/V)$ respectively.

Denote by $\Omega$ the space of all $3$-Lie algebra structures on $\g$. For any $\varpi\in\Omega$, there is a section $\sigma_\varpi\in\Gamma(E)$ given by
$$
\sigma_\varpi(V)=(V, \Lambda_\varpi(V)), \quad\text{where} \quad \Lambda_\varpi(V)(u, v, w)=p(\varpi(u, v, w)), \quad \forall u, v, w\in V,
$$
and a bundle map $\phi_\varpi: E\lon F$,
$$
\phi_{\varpi}(V, \alpha)=(V, \tilde{\partial}_{\varpi}(\alpha)), \quad \forall V\in M, \alpha\in\Hom(\wedge^{3}V, \g/V),
$$
where $\tilde{\partial}_{\varpi}:\Hom(\wedge^{3}V, \g/V)\lon\Hom(\wedge^{2}V\otimes\wedge^{2}V\wedge V, \g/V)$ is given by
\begin{eqnarray*}
&&\tilde{\partial}_{\varpi}(\alpha)(u_1\wedge v_1, u_2\wedge v_2, w)\\
&=&-\alpha\Big((\varpi(u_1, v_1, u_2)-sp(\varpi(u_1, v_1, u_2)))\wedge v_2+u_2\wedge(\varpi(u_1, v_1, v_2)-sp(\varpi(u_1, v_1, v_2))), w\Big)\\
&&-\alpha(u_2\wedge v_2, \varpi(u_1, v_1, w)-sp(\varpi(u_1, v_1, w)))+\alpha(u_1\wedge v_1, \varpi(u_2, v_2, w)-sp(\varpi(u_2, v_2, w)))\\
&&+p\Big(\varpi(u_1, v_1, s(\alpha(u_2\wedge v_2, w))-\varpi(u_2, v_2, s(\alpha(u_1\wedge v_1, w)))\\
&&-\varpi(v_2, w, s(\alpha(u_1\wedge v_1, u_2)))-\varpi(w, u_2, s(\alpha(u_1\wedge v_1, v_2)))\Big).
\end{eqnarray*}
Moreover, the map
$$
\Omega\lon\Gamma(E)\times\Gamma(\Hom(E, F)), \quad \varpi\lon (\sigma_{\varpi}, \phi_{\varpi}),
$$
is continuous under the $\huaC^{1}$-topology on the space of sections.

It follows that $\phi_{\varpi}\circ \sigma_{\varpi}(V)=(V, 0)$ for all $\varpi\in \Omega$. Assume that $\sigma_\pi(\h)=(\h, 0)$, i.e. $\h$ is a $3$-Lie subalgebra of $(\g, \pi)$, similar with the proof of Theorem \ref{thm11111}, it shows that
$$
T_{\h}M\stackrel{D^{v}(\sigma_\pi)_\h}{\longrightarrow} E_\h\stackrel{(\phi_{\pi})_\h}{\longrightarrow} F_\h
$$
is exact. By Proposition \ref{proorbR1}, for any $\varpi$ is sufficiently close to $\pi$, there exists a $\h'\in M$ close to $\h$ such that $\h'$ is a $3$-Lie subalgebra of $(\g, \varpi)$.
\end{proof}

At the end of this section, we will consider the relation between deformations of $3$-Lie algebra morphisms and deformations of $3$-Lie subalgebras.

\begin{pro}\label{deformal}
Let $(\g, \pi)$ and $(\h, \mu)$ be $3$-Lie algebras. If $f_t:\g\lon\h$ is a deformation of the $3$-Lie algebra morphism $f:\g\lon\h$, then there is a deformation of the $3$-Lie subalgebra $\mathrm{G}_f=\{(x, f(x))|\forall x\in\g\}$ in $\g\oplus\h$, where the $3$-Lie algebra structure on $\g\oplus\h$ is given by
$$
[(x, u), (y, v), (z, w)]_{\g\oplus\h}=\Big(\pi(x, y, z), \mu(u, v, w)\Big), \quad \forall x, y, z\in\g, u, v, w\in\h.
$$
\end{pro}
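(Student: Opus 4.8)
The plan is to assign to the deforming family $f_t$ its graphs $\h_t := \mathrm{G}_{f_t}=\{(x,f_t(x))\mid x\in\g\}$ inside $\g\oplus\h$, and to verify that $t\mapsto\h_t$ is a deformation of $\mathrm{G}_f$ in the sense of the definition preceding Proposition \ref{protangentgamma11}, taking $k=\mathrm{dim}\,\g$. Three points must be established: that each $\h_t$ is a $k$-dimensional $3$-Lie subalgebra of $(\g\oplus\h,\pi+\mu)$, that $\h_0=\mathrm{G}_f$, and that $t\mapsto\h_t$ is a smooth curve in $\mathrm{Gr}_k(\g\oplus\h)$.

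First I would check the subalgebra property, which is where the morphism condition is used. For any $x,y,z\in\g$, the product bracket gives
\begin{eqnarray*}
[(x,f_t(x)),(y,f_t(y)),(z,f_t(z))]_{\g\oplus\h}=\Big(\pi(x,y,z),\,\mu(f_t(x),f_t(y),f_t(z))\Big)=\Big(\pi(x,y,z),\,f_t(\pi(x,y,z))\Big),
\end{eqnarray*}
where the last equality holds precisely because $f_t$ is a $3$-Lie algebra morphism from $(\g,\pi)$ to $(\h,\mu)$. The right-hand side equals $(w,f_t(w))$ with $w=\pi(x,y,z)$, hence lies in $\h_t$, so $\h_t$ is closed under the bracket and is a $3$-Lie subalgebra. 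Since the projection $\g\oplus\h\lon\g$ restricts to a linear isomorphism $\h_t\stackrel{\sim}{\longrightarrow}\g$, each $\h_t$ has dimension $k=\mathrm{dim}\,\g$; and $f_0=f$ gives $\h_0=\mathrm{G}_f$ immediately.

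The only nontrivial point is the smoothness of $t\mapsto\h_t$ in the Grassmannian topology, and here I would invoke the standard affine chart of $\mathrm{Gr}_k(\g\oplus\h)$ adapted to the splitting. The set of $k$-planes $W$ with $W\cap(\{0\}\oplus\h)=0$ is an open coordinate neighborhood, and every such plane is uniquely the graph $\mathrm{G}_g$ of a linear map $g\in\Hom(\g,\h)$; thus the assignment $\Hom(\g,\h)\lon\mathrm{Gr}_k(\g\oplus\h)$, $g\mapsto\mathrm{G}_g$, is exactly a chart map and in particular smooth. Composing it with the smooth curve $t\mapsto f_t$ shows that $t\mapsto\h_t=\mathrm{G}_{f_t}$ is a smooth curve in $\mathrm{Gr}_k(\g\oplus\h)$, so $\h_t$ meets all the requirements of a deformation of $\mathrm{G}_f$. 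The hard part is conceptual rather than computational: one has to recognize that the graph construction simultaneously turns the morphism equation for $f_t$ into the subalgebra condition for $\h_t$ and keeps the whole curve inside a single coordinate chart of the Grassmannian, which is what makes the smoothness transparent, while the algebraic verification is a one-line consequence of the product bracket.
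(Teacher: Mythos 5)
Your proposal is correct and follows exactly the route of the paper's proof: pass to the graphs $\mathrm{G}_{f_t}$, observe that the morphism property of $f_t$ makes each graph closed under the product bracket, and note that $t\mapsto \mathrm{G}_{f_t}$ is a smooth curve in $\mathrm{Gr}_{\dim\g}(\g\oplus\h)$ with $\mathrm{G}_{f_0}=\mathrm{G}_f$. The paper simply asserts these three facts in one line, whereas you verify them (the bracket computation, the dimension count via the projection, and smoothness via the graph chart of the Grassmannian), so your write-up is a more detailed version of the same argument.
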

\begin{proof}
Since $f_t$ is a deformation of $f:\g\lon\h$, then $\mathrm{G}_{f_t}$ is a curve in $\mathrm{Gr}_{\mathrm{dim}\g}(\g\oplus\h)$ and $\mathrm{G}_{f_t}$ is a $3$-Lie subalgebra of $\g\oplus\h$ with $\mathrm{G}_{f_0}=\mathrm{G}_{f}$. Thus $\mathrm{G}_{f_t}$ is a deformation of $\mathrm{G}_{f}$ in $\g\oplus\h$.
\end{proof}

On the cohomology groups level, we have the following proposition, which implies that the rigidity and the stability of $3$-Lie algebra morphisms and subalgebras of $3$-Lie algebras are insistent.
\begin{pro}
Let $(\g, \pi)$ and $(\h, \mu)$ be $3$-Lie algebras, $f:\g\lon\h$ be a $3$-Lie algebra morphism. Then $H^k(f)\cong H^k\Big(\mathrm{G}_{f}, (\g\oplus\h)/\mathrm{G}_f\Big)$ for $k\geq 1$.
\end{pro}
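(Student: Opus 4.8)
The plan is to produce an explicit isomorphism of cochain complexes between $(\oplus_{n\geq 1}C^{n}(f),\delta)$ and $(\oplus_{n\geq 1}C^{n}(\mathrm{G}_f,(\g\oplus\h)/\mathrm{G}_f),\partial)$, and then pass to cohomology in each degree. First I would record the two linear isomorphisms attached to the graph. The map $\iota:\g\lon\mathrm{G}_f$, $\iota(x)=(x,f(x))$, is an isomorphism of $3$-Lie algebras, because $f$ being a morphism gives
\[
[\iota(x),\iota(y),\iota(z)]_{\g\oplus\h}=\big(\pi(x,y,z),\mu(f(x),f(y),f(z))\big)=\big(\pi(x,y,z),f(\pi(x,y,z))\big)=\iota(\pi(x,y,z)).
\]
The map $q:(\g\oplus\h)/\mathrm{G}_f\lon\h$, $q(\overline{(x,u)})=u-f(x)$, is a well-defined linear isomorphism with inverse $w\mapsto\overline{(0,w)}$.

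Second, I would identify the coefficient representation. Since $\mathrm{G}_f$ is a subalgebra and $\overline{\ad}$ is section-independent (Proposition \ref{exad}), for $u,v\in\g$ and $w\in\h$ one computes
\[
q\big(\overline{\ad}_{\iota(u)\wedge\iota(v)}(q^{-1}(w))\big)=q\big(\overline{(0,\mu(f(u),f(v),w))}\big)=\mu(f(u),f(v),w).
\]
Thus, transported through $\iota$ and $q$, the representation $(\g\oplus\h)/\mathrm{G}_f$ of $\mathrm{G}_f$ becomes exactly the operator $w\mapsto\mu(f(u),f(v),w)$ that appears in $\delta$. Using these identifications I would define, for $n\geq 1$,
\[
\Phi:C^{n}(f)\lon C^{n}(\mathrm{G}_f,(\g\oplus\h)/\mathrm{G}_f),\qquad \Phi(\theta)(\iota(x_1)\wedge\iota(y_1),\cdots,\iota(x_{n-1})\wedge\iota(y_{n-1}),\iota(x))=\overline{(0,\theta(x_1\wedge y_1,\cdots,x_{n-1}\wedge y_{n-1},x))},
\]
which is a linear isomorphism because $\iota$ and $q$ are. (The morphism complex additionally carries a degree-zero term $C^0(f)$ with no counterpart on the subalgebra side, consistent with the asserted range $k\geq 1$.)

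Third, the core of the argument is to verify $\Phi\circ\delta=\partial\circ\Phi$. Here I would compare the explicit formula \eqref{eqdefde01} for $\delta$ with the formula for $\partial$ applied to $\mathrm{G}_f$ with coefficients in $(\g\oplus\h)/\mathrm{G}_f$, matching terms groupwise: the inner-bracket terms coincide because $[\cdot,\cdot,\cdot]_{\g\oplus\h}$ restricted to $\mathrm{G}_f$ corresponds to $\pi$ under $\iota$; the terms carrying the representation coincide by the computation above; and the final, $(-1)^{n-1}$-weighted boundary terms match after using the cyclic (hence sign-preserving) symmetry of the totally skew bracket $\mu$, which rewrites $\mu(f(y_n),f(x),\theta(\cdots))$ and $\mu(f(x),f(x_n),\theta(\cdots))$ as $\mu(\theta(\cdots),f(y_n),f(x))$ and $\mu(f(x_n),\theta(\cdots),f(x))$, the forms occurring in $\delta$; the signs agree since $(-1)^{n+1}=(-1)^{n-1}$. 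I expect the main obstacle to be precisely this bookkeeping in the boundary terms, namely keeping the positional signs of \eqref{eqdefde01} aligned with those of $\partial$ and invoking skew-symmetry of $\mu$ in the correct slots. Once $\Phi\circ\delta=\partial\circ\Phi$ is established, $\Phi$ is an isomorphism of cochain complexes in degrees $\geq 1$, and passing to cohomology yields $H^{k}(f)\cong H^{k}(\mathrm{G}_f,(\g\oplus\h)/\mathrm{G}_f)$ for all $k\geq 1$.
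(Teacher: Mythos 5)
Your proposal is correct and takes essentially the same route as the paper: your map $\Phi$ is exactly the paper's $\Xi_n$ (the paper's identity $\overline{\ad}_{(x,f(x))\wedge(y,f(y))}[(z,u)]=[(0,\mu(f(x),f(y),u-f(z)))]$ is precisely your transport of the representation through $q$), and both arguments conclude by verifying the intertwining relation $\Phi\circ\delta=\partial\circ\Phi$ degreewise for $n\geq 1$ and passing to cohomology. The only cosmetic difference is that you obtain bijectivity of $\Phi$ directly from the isomorphisms $\iota$ and $q$, whereas the paper argues injectivity together with a dimension count.
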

\begin{proof}
For any $n\geq 1$, define maps
$$
\Xi_{n}:\Hom(\underbrace{\wedge^{2}\g\otimes\cdots\otimes\wedge^{2}\g}_{n-1}\wedge\g,\h)\lon\Hom\Big(\underbrace{\wedge^{2}\mathrm{G}_f\otimes\cdots\otimes\wedge^{2}\mathrm{G}_{f}}_{n-1}\wedge\mathrm{G}_f,(\g\oplus\h)/\mathrm{G}_f\Big)
$$
by
\begin{eqnarray*}
&&\Xi_n(\alpha)\Big((x_1, f(x_1))\wedge(y_1, f(y_1)), \cdots, (x_{n-1}, f(x_{n-1})), (x, f(x))\Big)\\
&=&[(0, \alpha(x_1\wedge y_1, \cdots, x_{n-1}\wedge y_{n-1}, x))].
\end{eqnarray*}
Since $\Xi_{n}$ are injective and $\mathrm{dim}(C^{n}(f))=\mathrm{dim}(C^{n}(\mathrm{G}_f, (\g\oplus\h)/\mathrm{G}_f)$, then $\Xi_n$ are isomorphic for $n\geq 1$.

Moreover, since $\overline{\ad}_{(x, f(x))\wedge(y, f(y))}[(z, u)]=[(0, \mu(f(x), f(y), u-f(z)))]$, we have
\begin{eqnarray*}
&&\partial(\Xi_n(\alpha))\Big((x_1, f(x_1))\wedge(y_1, f(y_1)), \cdots, (x_{n}, f(x_{n})), (x, f(x))\Big)\\
&=&[\Big(0, \sum_{1\leq i<j\leq n}(-1)^{i}\alpha\Big(x_1\wedge y_1, \cdots, \widehat{x_i\wedge y_i}, \cdots, x_{j-1}\wedge y_{j-1}, \pi(x_i, y_i, x_j)\wedge y_j\\
&&+x_j\wedge\pi(x_i, y_i, y_j), \cdots, x_{n}\wedge y_n, x\Big)+\sum_{i=1}^{n}(-1)^{i}\alpha\Big(x_1\wedge y_1,\cdots, \widehat{x_i\wedge y_i}, \cdots, x_{n}\wedge y_n, \pi(x_i, y_i, x)\Big)\Big)]\\
&&+[\Big(0, \sum_{i=1}^{n}(-1)^{i+1}\mu(f(x_i), f(y_i), \alpha(x_1\wedge y_1,\cdots, \widehat{x_i\wedge y_i}, \cdots, x_n\wedge y_n, x))\\
&&+(-1)^{n+1}\Big(\mu(f(y_n), f(x), \alpha(x_1\wedge y_1,\cdots, x_{n-1}\wedge y_{n-1}, x_{n})\\
&&+\mu(f(x), f(x_n), \theta(x_1\wedge y_1,\cdots, x_{n-1}\wedge y_{n-1}, y_{n}))\Big)\Big)]\\
&=&\Xi_{n+1}(\delta(\alpha))\Big((x_1, f(x_1))\wedge(y_1, f(y_1)), \cdots, (x_{n}, f(x_{n})), (x, f(x))\Big).
\end{eqnarray*}
Thus $H^k(f)\cong H^k\Big(\mathrm{G}_f, (\g\oplus\h)/\mathrm{G}_f\Big)$ for $k\geq 1$.
\end{proof}
\emptycomment{
\begin{rmk}
In Proposition \ref{deformal}, it shows that deformations of the trivial morphism of $3$-Lie algebras $f_t:\g\lon\h$ induce deformations of $3$-Lie subalgebra $\g: \g_t\subset\g\oplus\h$. On the cohomology groups level, there are $H^k(f)\cong H^k(\g, (\g\oplus\h)/\g)$ for $k\geq 1$.
\end{rmk}
}

\vspace{2mm}
\noindent
{\bf Acknowledgements. } This research is supported by NSFC(12471060, 12401076) and China Postdoctoral Science Foundation (2023M741349).

 \end{document}